\DeclareRobustCommand*{\bfseries}{%
  \not@math@alphabet\bfseries\mathbf
  \fontseries\bfdefault\selectfont
  \boldmath
}
\newcommand{\COMMENTED}[1]{}
\newcommand{\lab}[1]{\label{#1}}
\newcommand{\IR}{\mathbb{R}}
\newcommand{\IRc}{\IR_\text{c}}
\newcommand{\IB}{\mathbb{B}}
\newcommand{\ID}{\mathbb{D}}
\newcommand{\IQ}{\mathbb{Q}}
\newcommand{\IO}{\mathbb{O}}
\newcommand{\II}{\mathbb{I}}
\newcommand{\IP}{\mathbb{P}}
\newcommand{\IC}{\mathbb{C}}
\newcommand{\IZ}{\mathbb{Z}}
\newcommand{\IN}{\mathbb{N}}
\newcommand{\dom}{\operatorname{dom}}
\newcommand{\poly}{\operatorname{poly}}
\renewcommand{\Re}{\operatorname{Re}}
\renewcommand{\Im}{\operatorname{Im}}
\newcommand{\Lip}{\operatorname{Lip}}
\newcommand{\Card}{\operatorname{Card}}
\newcommand{\sgn}{\operatorname{sgn}}
\newcommand{\bin}{\operatorname{bin}}
\newcommand{\kernel}{\operatorname{kernel}}
\newcommand{\id}{\operatorname{id}}
\newcommand{\sdzero}{\textup{\texttt{0}}\xspace}
\newcommand{\sdone}{\textup{\texttt{1}}\xspace}
\newcommand{\double}{\textup{\texttt{double}}\xspace}
\newcommand{\Reg}{\calR}
\newcommand{\BCSS}{\BSS}
\newcommand{\BSS}{\text{BSS}\xspace}
\newcommand{\UC}{\operatorname{UC}}
\newcommand{\barR}{\overline{\calR}}
\newcommand{\calR}{\mathcal{R}}
\newcommand{\calA}{\mathcal{A}}
\newcommand{\calB}{\mathcal{B}}
\newcommand{\calO}{\mathcal{O}}
\newcommand{\calF}{\mathcal{F}}
\newcommand{\calM}{\mathcal{M}}
\newcommand{\calT}{\mathcal{T}}
\newcommand{\calP}{\ensuremath{\mathcal{P}}\xspace}
\newcommand{\calUP}{\ensuremath{\mathcal{UP}}\xspace}
\newcommand{\calNP}{\ensuremath{\mathcal{NP}}\xspace}
\newcommand{\sharpP}{\ensuremath{\#\mathcal{P}}\xspace}
\newcommand{\calPSPACE}{\ensuremath{\mathcal{PSPACE}}\xspace}
\newcommand{\calCH}{\ensuremath{\mathcal{CH}}\xspace}
\newcommand{\calEXP}{\text{\sf EXP}\xspace}
\newcommand{\person}[1]{\textsc{#1}}
\newcommand{\mycite}[2]{{\rm\cite[\textsc{#1}]{#2}}}
\newcommand{\Max}{\operatorname{Max}}
\newcommand{\MAX}{\operatorname{MAX}}
\newcommand{\dsolve}{\operatorname{dsolve}}
\newcommand{\myto}{\!\to\!}
\newcommand{\toto}{\rightrightarrows}
\newcommand{\mapstoto}{\Mapsto}
\newcommand{\ball}{\operatorname{ball}}
\newcommand{\cball}{\overline{\operatorname{ball}}}
\newcommand{\Cinfty}{C^\infty}
\newcommand{\Comega}{C^\omega}
\newcommand{\Comegax}[1]{\Comega\!\left(#1\right)}
\newcommand{\Gevrey}{G}
\newcommand{\Cheby}{T}
\newcommand{\OCHEBY}[1]{\IP_{\calT,#1}}
\newcommand{\ICHEBY}[1]{\II_{\calT,#1}}
\newcommand{\myrho}{\rho}
\newcommand{\binary}{\operatorname{binary}}
\newcommand{\unary}{\operatorname{unary}}
\newcommand{\rhody}{\myrho_{\text{\rm dy}}}
\newcommand{\tilderhorho}{[\widetilde{\rhody\!\!\!\to\!\!\rhody}]}
\newcommand{\dyrho}{\rhody^{\ID}}
\newcommand{\dyrhoLip}{\dyrho+\binary(\Lip)}
\newcommand{\deltabox}{\delta_{_{\Box}}}
\newcommand{\entire}{\tilde\varepsilon}
\newcommand{\iRRAM}{\texttt{iRRAM}\xspace}
\newcommand{\MatLab}{\texttt{MatLab}\xspace}
\newcommand{\reduce}{\preceq}
\newcommand{\reducep}{\preceq_{\text{\rm p}}}
\newcommand{\reduceP}{\preceq_{\text{\rm P}}^2}
\newcommand{\Baire}{\IB}
\newtheorem{fact}[theorem]{Fact}
\newtheorem{myremark}[theorem]{Remark}
\newtheorem{myexample}[theorem]{Example}
\newtheorem{myproblem}[theorem]{Problem}
\newtheorem{myquestion}[theorem]{Question}
\newtheorem{mydefinition}[theorem]{Definition}
\newtheorem{mylemma}[theorem]{Lemma}
\newtheorem{myproposition}[theorem]{Proposition}
\newtheorem{observation}[theorem]{Observation}
\newtheorem{goals}[theorem]{Goals}
\def\institutename{\par
 \begingroup
 \parskip=\z@
 \parindent=\z@
 \setcounter{@inst}{1}%
 \def\and{\qquad\stepcounter{@inst}%
 \noindent$^{\the@inst}$\enspace\ignorespaces}%
 \setbox0=\vbox{\def\thanks##1{}\@institute}%
   \xdef\fnnstart{\c@@inst}%
   \setcounter{@inst}{1}%
   \noindent$^{\the@inst}$\enspace
 \ignorespaces
 \@institute\par
 \endgroup}
\begin{document}
\addtocounter{page}{-1}%
\title{%
Parameterized Uniform Complexity in Numerics: \\
from Smooth to Analytic, from $\calNP$--hard to Polytime\thanks{%
The first author is supported in part by \emph{Kakenhi} 
(Grant-in-Aid for Scientific Research) \texttt{23700009}; 
the others by the \emph{Marie Curie International Research
Staff Exchange Scheme Fellowship} \texttt{294962}
within the 7th European Community Framework Programme.
We gratefully acknowledge
seminal discussions and suggestions from \person{Ulrich Kohlenbach},
\person{Eike Neumann},
\person{Robert Rettinger}, \person{Matthias Schr\"{o}der},
and \person{Florian Steinberg}.}}
\titlerunning{Parameterized Uniform Complexity in Numerics}
\author{Akitoshi Kawamura\inst{1}, \;\;Norbert Th. M\"{u}ller\inst{2}, \;\;Carsten R\"{o}snick\inst{3}, \;\;Martin Ziegler\inst{3}}
\institute{University of Tokyo \qquad \and Universit\"{a}t Trier \qquad \and TU Darmstadt}
\date{}
\makeatletter
\renewcommand\maketitle{\newpage
  \refstepcounter{chapter}%
  \stepcounter{section}%
  \setcounter{section}{0}%
  \setcounter{subsection}{0}%
  \setcounter{figure}{0}
  \setcounter{table}{0}
  \setcounter{equation}{0}
  \setcounter{footnote}{0}%
  \begingroup
    \parindent=\z@
    \renewcommand\thefootnote{\@fnsymbol\c@footnote}%
    \if@twocolumn
      \ifnum \col@number=\@ne
        \@maketitle
      \else
        \twocolumn[\@maketitle]%
      \fi
    \else
      \newpage
      \global\@topnum\z@   
      \@maketitle
    \fi
    \thispagestyle{empty}\@thanks
    \def\\{\unskip\ \ignorespaces}\def\inst##1{\unskip{}}%
    \def\thanks##1{\unskip{}}\def\fnmsep{\unskip}%
    \instindent=\hsize
    \advance\instindent by-\headlineindent
    \if@runhead
       \if!\the\titlerunning!\else
         \edef\@title{\the\titlerunning}%
       \fi
       \global\setbox\titrun=\hbox{\small\rm\unboldmath\ignorespaces\@title}%
       \ifdim\wd\titrun>\instindent
          \typeout{Title too long for running head. Please supply}%
          \typeout{a shorter form with \string\titlerunning\space prior to
                   \string\maketitle}%
          \global\setbox\titrun=\hbox{\small\rm
          Title Suppressed Due to Excessive Length}%
       \fi
       \xdef\@title{\copy\titrun}%
    \fi
    \if!\the\tocauthor!\relax
      {\def\and{\noexpand\protect\noexpand\and}%
      \protected@xdef\toc@uthor{\@author}}%
    \else
      \def\\{\noexpand\protect\noexpand\newline}%
      \protected@xdef\scratch{\the\tocauthor}%
      \protected@xdef\toc@uthor{\scratch}%
    \fi
    \if@runhead
       \if!\the\authorrunning!
         \value{@inst}=\value{@auth}%
         \setcounter{@auth}{1}%
       \else
         \edef\@author{\the\authorrunning}%
       \fi
       \global\setbox\authrun=\hbox{\small\unboldmath\@author\unskip}%
       \ifdim\wd\authrun>\instindent
          \typeout{Names of authors too long for running head. Please supply}%
          \typeout{a shorter form with \string\authorrunning\space prior to
                   \string\maketitle}%
          \global\setbox\authrun=\hbox{\small\rm
          Authors Suppressed Due to Excessive Length}%
       \fi
       \xdef\@author{\copy\authrun}%
       \markboth{\@author}{\@title}%
     \fi
  \endgroup
  \setcounter{footnote}{\fnnstart}%
  \clearheadinfo}

\def\institutename{\par
 \begingroup
 \parskip=\z@
 \parindent=\z@
 \setcounter{@inst}{1}%
 \def\and{\qquad\stepcounter{@inst}%
 \noindent$^{\the@inst}$\enspace\ignorespaces}%
 \setbox0=\vbox{\def\thanks##1{}\@institute}%
 \ifnum\c@@inst=1\relax
   \gdef\fnnstart{0}%
 \else
   \xdef\fnnstart{\c@@inst}%
   \setcounter{@inst}{1}%
   \noindent$^{\the@inst}$\enspace
 \fi
 \ignorespaces
 \@institute\par
 \endgroup}

\makeatother

\maketitle
\begin{abstract}
The synthesis of classical Computational Complexity Theory
with Recursive Analysis provides a quantitative
foundation to reliable numerics. Here
the operators of maximization, integration, and
solving ordinary differential equations are known
to map (even high-order differentiable) polynomial-time 
computable functions to instances which are `hard'
for classical complexity classes $\calNP$, $\sharpP$,
and $\calCH$; but, restricted to analytic 
functions, map polynomial-time computable ones to
polynomial-time computable ones --- non-uniformly!

We investigate the uniform parameterized complexity 
of the above operators in the setting of Weihrauch's
TTE and its second-order extension due to 
Kawamura\&Cook (2010). That is, we explore which 
(both continuous and discrete, first and second order) 
information and parameters
on some given $f$ is sufficient to obtain similar 
data on $\Max f$ and $\int f$;
and within what running time,
in terms of these parameters and 
the guaranteed output precision $2^{-n}$.

It turns out that Gevrey's hierarchy of functions
climbing from analytic to smooth corresponds to
the computational complexity of maximization
growing from polytime to $\calNP$-hard.
Proof techniques involve mainly the Theory of (discrete)
Computation, Hard Analysis, and Information-Based Complexity.
\end{abstract}
\setcounter{footnote}{0}
\renewcommand{\thefootnote}{\fnsymbol{footnote}}

\setcounter{tocdepth}{3}
\renewcommand{\contentsname}{}
\begin{center}
\begin{minipage}[c]{0.95\textwidth}\vspace*{-8ex}%
\tableofcontents
\end{minipage}
\end{center}

\pagebreak

\section{Motivation and Introduction}
Numerical methods provide practical solutions to many, and particularly
to very large, problems arising for instance in Engineering.
Nonlinear partial differential equations for instance are usually
treated by discretizing the domain of the solution function space,
i.e. approximating the latter by a high but finite dimensional
space. Due to the nonlinearity, sub-problems may involve (low-dimensional) 
numerical integration and maximization 
and are regularly handled by standard methods.
For instance Newton Iterations locally converge quadratically 
to a root $x_0$ of $f'$, that is, a candidate optimum.

We thus record that numerical science has devised a variety of 
impressive methods working in practice very well --- in terms 
of an intuitive conception of efficiency. 
A notion capturing this formally, on the other hand, is
at the core of the Theory of Computation and has led to
standard complexity classes like $\calP$, $\calNP$, $\calUP$,
$\sharpP$, $\calCH$, and $\calPSPACE$: for discrete problems,
that is, over sequences of bits encoding, say, 
integers or graphs \cite{Papadimitriou}. To quote from \cite[\S1.4]{BCSS}:

\begin{quote}\it
 The developments described in the previous section
 (and the next) have given a firm foundation to computer
 science as a subject in its own right. Use of the Turing
 machines yields a unifying concept of the algorithm
 well formalized. [\ldots] The situation in numerical analysis
 is quite the opposite. Algorithms are primarily a means to
 solve practical problems. There is not even a formal definition
 of algorithm in the subject. [\ldots] Thus we view numerical
 analysis as an eclectic subject with weak
 foundations; this certainly in no way denies its great
 achievements through the centuries.
\end{quote}

Recursive Analysis is the theory of computation over real numbers 
by rational approximations
up to prescribable absolute error $2^{-n}$. Initiated by Alan Turing
(in the very same work that introduced the machine now named after him 
\cite{Turing37}) it provides a computer scientific foundation 
to reliable numerics \cite{Alefeld} and computer-assisted proofs
\cite{Rump} in unbounded precision; 
cmp., e.g., \cite{Kreinovich,Siam100Digits,Braverman2}. 

\begin{myremark} \lab{r:Numerics}
  Mainstream numerics is usually scrupulous about constant 
  factor gains or losses (e.g. $5\times$) in running time.
  This generally means an implicit restriction to hardware supported 
  calculations, that is, to \double and in particular to
  fixed-precision arithmetic.
\begin{enumerate}
\item[a)]
  Alternative approaches that, in order to approximate the result with \double
  accuracy, use \emph{un}bounded precision for intermediate calculations,
  therefore reside in a `blind spot' --- which 
  Recursive Analysis may shed some light on {\rm\cite{Korovina}}.
\item[b)] Inputs $x$ are in classical numerics generally considered `exact';
  equivalently: both subtraction and the Heaviside function regarded as computable.
\item[c)] 
  Outputs $y$, on the other hand, constitute mere approximations
  to the `true' values $f(x)$. In consequence,
  this notion of real computation necessarily
  lacks closure under composition {\rm\cite[p.325]{YapGuaranteedAccuracy}}.
\end{enumerate}
The latter is more than just a conceptual annoyance:
the (approximate) result of one subroutine cannot in general 
be fed as argument to another in order to obtain a (provably always) correct algorithm,
thus spoiling the modular approach to software development!
\end{myremark}
Rooted in the Theory of Computation, Recursive Analysis on the
other hand is closed under composition --- and interested only
in asymptotic algorithmic behaviour, that is, ignoring constant
factors in running times.

\begin{myremark} \lab{r:iRRAM}
Libraries based on 
this model of computation {\rm\cite{iRRAM,realLib}}, on calculations 
that suffice with \double precision (and thus do not actually 
make use of the enhanced power) can generally be expected to
run 20 to 200 times slower than a direct implementation. 
Even in this limited realm they are useful for fast numerical 
prototyping, that is, for a first quick-and-dirty coding of 
some new algorithmic approach to empirically explore its 
typical running time behaviour and stability properties.
Indeed,
\begin{enumerate}
\item[i)] accuracy issues are taken care
of by the machine automatically
\item[ii)] closure under composition
allows for modularly combining subroutines 
\item[iii)] chosen from, or contributing to,
a variety of standard real functions and non-/linear operators.
\end{enumerate}
Their full power of course lies in computations involving
intermediate or final results with unbounded guaranteed precision
{\rm\cite{RobertBloch}}.
\end{myremark}
Recursive Analysis has over the last few decades
evolved into a rich and flourishing theory with many classical results 
in real and complex analysis investigated for their computability.
That includes, in addition to numbers, also `higher-type' objects 
such as (continuous) functions, (closed and open) subsets, 
and operators thereon \cite{MLQ2,SchroederProbability}: 
by fixing some suitable encoding of the arguments 
(real numbers, continuous functions, closed/open subsets) 
into infinite binary strings. More precisely the 
\emph{Type-2 Theory of Effectivity} (TTE, cmp. Remark~\ref{r:TTE} below) 
studies and compares such encodings (so-called representations) \cite{Weihrauch}:
mostly \emph{qualitatively} in the sense of which mappings they 
render computable and which not.

\begin{fact} \lab{f:Nonunif}
Concerning complexity theory, \person{Ker-I Ko} and \person{Harvey Friedman} 
have constructed in {\rm\cite{KoFriedman}} certain smooth 
(i.e. $C^\infty$) functions $f,g:[0;1]\to[0;1]$
computable in time polynomial in the output precision $n$
(short: polytime) and proved the following:
\begin{enumerate}
\item[a)] The function
$\Max(f):=\big([0;1]\ni t\mapsto\max\{f(x):0\leq x\leq t\}\big)\in\Cinfty[0;1]$
is again polytime 
iff $\calP=\calNP$ holds; cmp. \mycite{Theorem~3.7}{Ko91}.
\item[b)] The function
$\int g:=\big([0;1]\ni t\mapsto\int_0^t g(x)\,dx\big)\in\Cinfty[0;1]$
  is polytime for every polytime $f\in C[0;1]$
  ~iff~ $\calP=\sharpP$; cmp. \mycite{Theorem~5.33}{Ko91}.
\item[c)] 
More recently, one of us succeeded in constructing a
polytime Lipschitz--continuous function
$h:[0;1]\times[-1;1]\to[-1;1]$ such that the unique solution
$u:=\dsolve(h):[0;1]\to[-1;1]$ to the ordinary differential equation
\begin{equation} \label{e:ODE}
  u'(t) \;=\; h\big(t,u(t)\big), \quad u(0)=0
\end{equation}
is again polytime iff $\calP=\calPSPACE$ {\rm\cite{AkiODE}}.
\item[d)]
Restricted to polytime right-hand sides $h\in C^k$,
$\dsolve(h)$ is again polytime iff $\calP=\calCH$ {\rm\cite{Ota}}.
\end{enumerate}
\end{fact}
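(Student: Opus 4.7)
The plan is to handle each of (a)--(d) as two matching reductions: an upper bound showing that the stated discrete class suffices to compute the analytic operator on every polytime smooth input, and a hardness direction that produces an explicit polytime $\Cinfty$ (resp.\ polytime Lipschitz or $C^k$) function whose analytic behaviour encodes a hard discrete problem. For the upper bounds, the common tool is uniform discretisation with a polytime modulus of continuity. In (a), approximating $\Max f(t)$ to error $2^{-n}$ is decided bit-by-bit by the $\calNP$-style query ``does there exist a dyadic $x\leq t$ of denominator $2^{\mu(n+O(1))}$ with $f(x)$ exceeding a specified threshold'', which under $\calP=\calNP$ becomes polytime. In (b), $\int g$ is discretised as a sum of $2^{\mu(n+O(1))}$ polytime dyadic contributions, so each output bit is a $\sharpP$ count. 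In (c) and (d), an Euler-type simulation of exponentially many short time steps with quantised coordinates is carried out in $\calPSPACE$ and in the counting hierarchy $\calCH$ respectively, using the Lipschitz (resp.\ $C^k$) bound on $h$ to control error propagation.

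The hard directions carry the real technical weight. For (a), start from an $\calNP$-hard language with polytime verifier $V(z,w)$, $|w|=p(|z|)$. To pairwise disjoint dyadic slots $I_z\subset[0;1]$ indexed by candidate inputs $z$, attach exponentially narrow $\Cinfty$ bumps $b_{z,w}$ of a common height whenever $V(z,w)=1$, built from a template such as $\exp\!\bigl(-1/(1-x^{2})\bigr)$ with geometrically shrinking support widths. The resulting $f$ is smooth and polytime (the dyadic address scheme allows the bump active at a query point $x$, if any, to be located in polytime), while $\Max f\!\restriction\! I_z$ attains the common bump height iff $z$ is a YES-instance; this gives $\calNP$-hardness of $\Max$. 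The same construction, with heights and widths retuned so that every bump contributes exactly $2^{-|w|}$ to the integral, reduces $\sharpP$ to extracting a prescribed bit of $\int g$, giving (b). Parts (c) and (d) reuse the bump technology to encode Turing-machine configurations as coordinates of a real trajectory and to program one transition by a polytime vector field; iterating the ODE through exponentially many simulated steps transports the computational hardness into $\dsolve(h)$, with mere Lipschitz rigidity forcing the $\calPSPACE$ bound of (c) and finite-order smoothness weakening the simulation precisely to $\calCH$ as in (d).

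The principal obstacle throughout is to secure three seemingly conflicting constraints at once: the patchwork $f$ must be globally $\Cinfty$ (or $C^k$ with a polytime modulus for each derivative), not merely piecewise; its values must be polytime in the target precision $n$ together with a polytime modulus of continuity; and the hardness encoding must survive the smoothing and remain decodable from $\Max f$, $\int g$, or $\dsolve(h)$. I would resolve this by choosing the bump widths to shrink geometrically with the dyadic slot depth, so that all higher derivatives remain polynomially bounded and the global smoothness follows from the bumps and all their derivatives vanishing at the glue points, while the indexing scheme is kept transparent enough that identifying the active bump at a query point is itself polytime. The most delicate point for (c) and (d) is additionally the ``clocking'' of the simulated computation inside an autonomous ODE, where conflicting smoothness/Lipschitz budgets between the level encoding the discrete step and the level driving the time progression determine exactly which complexity class is captured.
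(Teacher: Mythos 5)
The crux of what the paper itself proves for this Fact is the hardness construction behind part~a), recalled in Example~\ref{x:maxNP}, and it is exactly there that your bump design breaks on its key quantitative point. You attach bumps of a \emph{common} height to infinitely many pairwise disjoint slots whose widths shrink geometrically. Infinitely many disjoint slots in $[0;1]$ must accumulate somewhere; near that accumulation point there are peaks of the fixed height while the function vanishes at the point itself, so your $f$ is not even continuous, let alone $\Cinfty$ -- and the claim in your resolution paragraph that geometrically shrinking widths keep ``all higher derivatives polynomially bounded'' is false, since the $j$-th derivative of a bump of height $H$ and width $w$ scales like $H/w^{j}$. The construction only works because the heights are made to decay faster than every power of the width, yet only quasi-polynomially, so that they remain detectable at polynomial output precision: Example~\ref{x:maxNP} takes height $\exp(-\ln^2 N)=N^{-\ln N}$ against width $\Theta(N^{-2})$ (resp.\ $\Theta(N^{-3})$ in the padded $\calNP$-complete version), whence $h_N^{(j)}\to0$ uniformly for each fixed $j$ (giving smoothness of the sum), while the gap $e^{-\ln^2 N}-e^{-\ln^2(N+1)}\ge e^{-\ln^2 N-\ln N-1}$ is still visible at error $2^{-n}$ with $n=\calO(\ln^2 N)$ polynomial in the binary length of $N$. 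This calibration of heights is the missing idea, not a detail; the same issue propagates to your retuned construction for part~b).

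A second, independent problem: you decode hardness from the maximum of $f$ restricted to the slot $I_z$, but the operator in the statement is the prefix maximum $\Max(f)(t)=\max\{f(x):0\le x\le t\}$. With equal heights the prefix maximum saturates as soon as $[0;t]$ contains any YES-slot and cannot isolate the slot of a given instance. The paper's construction needs the strictly decreasing heights together with the slot ordering (slots at $1/N$, accumulating at $0$) precisely for this reason: at the probe point $x_{N,N-1}$ the prefix maximum equals $e^{-\ln^2 N}$ iff $N\in K$ and is otherwise at most $e^{-\ln^2(N+1)}$. Your upper-bound directions (threshold search with an $\calNP$ oracle, Riemann sums as $\sharpP$ counts, Euler-type simulation for the ODE) follow the standard route, and parts~c)+d) are in any case only cited rather than proved in the paper; but as written your hardness argument for a) does not produce a smooth polytime $f$ from which the stated operator $\Max$ reveals the hard language.
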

Put differently: If numerical methods could indeed calculate
maxima (or even integrals or solve ODEs)
efficiently \emph{with prescribable 
error in the worst case}, this would mean a positive answer
to the first Millennium Prize Problem (and beyond); cmp. \cite{Smale}!
The proof of Fact~\ref{f:Nonunif}a)
will be recalled in Example~\ref{x:maxNP} below.

\begin{myremark} \lab{r:tallyUP}
\begin{enumerate}
\item[a)]
We consider here the real problems as operators $\IO$ 
mapping functions $f$ to functions $\IO(f)$, that is, depending
on the upper end $t$ of the interval $[0;t]$ which the given $f$
is to be maximized, integrated, or the ODE solved on.
Fixing $t:=1$ amounts to functionals 
$f\mapsto\Lambda(f):=\IO\big(f\big)(1)$;
and the worst-case complexity of the single 
real numbers $\|f\|=\Max\big(f\big)(1)$ and
$\int_0^1 f(x)\,dx$ for polytime $f\in\Cinfty[0;1]$
corresponds to famous open questions concerning unary
complexity classes $\calP_1$, $\calNP_1$, and $\sharpP_1$
\mycite{Theorems~3.16+5.32}{Ko91}; recall \textsf{Mahaney's Theorem}.
\item[b)]
It has been conjectured {\rm\cite{Shary}}
that the difficulty of optimizing some (smooth)
function $f:[0;1]\to[0;1]$ may arise from it having many maxima;
and indeed this is the case for the `bad' function $f$
according to Fact~\ref{f:Nonunif}a). However a modification 
of the construction proving \mycite{Theorem~3.16}{Ko91}
yields a polytime computable smooth $\tilde f$ such that,
for every $x\in[0;1]$, $\tilde f\big|_{[0;x]}$ attains its  
maximum in precisely one point while relating the complexity
of the single real $\Max\big(\tilde f\big)(1)$ to 
the complexity class $\calUP_1$ of unary decision problems
accepted by a nondeterministic polytime Turing machine with
at most one accepting computation for each input.
Recall the \textsf{Valiant--Vazirani Theorem} and 
that the open question ``$\calP\neq\calUP$'' is equivalent
to that of the existence of cryptographic one-way functions
\mycite{Theorem~12.1}{Papadimitriou}.
\end{enumerate}
\end{myremark}
Indeed, the numerics community seems little aware of these connections.
And, as a matter of fact, such ignorance may almost have some justification:
The above functions $f$ and $g$ and $h$, although satisfying 
strong regularity conditions, are `artificial\footnote{\label{f:Schwarz}%
Theoretical physicists, used to regularly invoking Schwarz's Theorem,
discard the following `counter-example' as \emph{artificial}:
$$ f:(-1;1)^2\to\IR, \qquad
(x,y)\mapsto\left\{ \begin{array}{ll} 
\tfrac{xy(x^2-y^2)}{x^2+y^2} & (x,y)\neq(0,0) \\
0 & (x,y)=(0,0) \end{array}\right. $$
Mathematicians on the other hand point out that
Schwarz's Theorem requires as a prerequisite the
\emph{continuity} of the second derivatives.}' in any intuitive 
sense and clearly do not arise in practical applications. 
This, on the other hand, raises 

\begin{myquestion} \lab{q:Main}
\begin{enumerate}
\item[a)]
Which functions are the ones numerical practitioners regularly
and implicitly allude to when claiming to be able to efficiently
calculate their maximum, integral, and ODE solution? 
\item[b)]
More formally, on which (classes $\calF_k$ of) functions do 
these operations become computable in polynomial time
$\calO(n^d)$ for some (and, more precisely, for which) 
$d=d(k)\in\IN$?
\end{enumerate}
\end{myquestion}
We admit that numerics as pursued in Engineering might not really
need to answer this question but be happy as long as, say, \MatLab
readily solves those particular instances encountered everyday.
Such a pragmatic\footnote{not unsimilar to \person{Paracelsus}'
``\emph{He who heals is right}'' in medicine} approach seems indeed
compatible for instance with the spirit of the \texttt{NAG} library:

\begin{quote} \it
{\tt nag\_opt\_one\_var\_deriv(e04bbc)} \underline{normally} computes a 
sequence of $x$ values which \underline{tend in the limit} to a minimum of 
$F(x)$ subject to the given bounds.
\end{quote}
Both Mathematics and (theoretical) Computer Science, however,
do come with a tradition of explicitly 
\emph{stating prerequisites}\footref{f:Schwarz}
to theorems and \emph{specifying the domains and running-time bounds} 
to (provably correct) algorithms, respectively:
crucial for modularly combining known results/algorithms
to obtain new ones; recall the end of Remark~\ref{r:Numerics}. 
In the case of real computation, finding such a specification
for maximization and integration, say, amounts to answering
Question~\ref{q:Main}b). Moreover, the answer cannot include
all polytime $C^k$--functions for any $k\in\IN$
according to Fact~\ref{f:Nonunif}. On the other hand, we record

\begin{myexample} \lab{x:Upper} 
One important (but clearly too small) class $\calF$ of functions $f:[-1;1]\to\IR$,
for which $\Max f$, $\int f$, and $\dsolve(f)$ 
as well as derivatives $f^{(j)}$ are known
polytime whenever $f$ is, consists of the real 
analytic ones: those locally admitting power series 
expansions, cmp. {\rm\cite[p.208]{Ko91}} and 
see {\rm\cite{Mueller87,Moiske,Mueller95,Bournez}};
equivalently \mycite{Proposition~1.2.12}{RealAnalytic}: 
those satisfying, for some $A,K\in\IN$,
\begin{equation} \label{e:Hadamard1}
\forall |x|\leq1, \quad
\forall j\in\IN: \qquad |f^{(j)}(x)|\;\leq \; A\cdot K^j\cdot j! \enspace .
\end{equation}
Indeed, the following is the standard example of a smooth but non-analytic function:
\[ h\;: [-1;1] \;\ni\; x\;\mapsto\; 
\left\{ \begin{array}{ll} \exp(-1/x) & x>0 \\ 0 & x\leq0  \end{array}\right. \]
Based on l'H\^{o}spital's rule it is easy to verify that $h$ is continuous 
at $x_0=0$ and differentiable 
arbitrarily often with $h^{(j)}(0)=0$, hence having
Taylor expansion around $x_0$ disagree with $h$.
\end{myexample}
Note that both Fact~\ref{f:Nonunif} and Example~\ref{x:Upper}
are stated \emph{non-}uniformly in the sense of ignoring how
$f$ is algorithmically transformed into, say, $\Max f$.
Consisting of lower complexity bounds, this makes Fact~\ref{f:Nonunif}
particularly strong; whereas as upper bounds Example~\ref{x:Upper} 
merely asserts, whenever there exists some $d\in\IN$ and an algorithm 
$\calA$ approximating $f$ within time $\calO(m^d)$ up to error $2^{-m}$,
the existence of some $e\in\IN$ and an algorithm $\calB$ approximating
$\Max f$ up to error $2^{-n}$ within time $\calO(n^e)$:
both the dependence of $\calB$ on $\calA$ and that 
of $e$ on $d$ are ignored. Put differently,
the proofs of Example~\ref{x:Upper} may and do implicitly
make use of many integer parameters of $f$ (such as
numerator and denominator of starting points for Newton's
Iteration converging to a root $x_0$ of $f'$ of known 
multiplicity where $f$ attains its maximum) since,
nonuniformly, they constitute simply constants. 
Useful uniform (computability and) upper complexity bounds 
like in \cite{Bournez}, on the other hand, fully specify 
\begin{enumerate}
\item[i)] which information on $f$ is employed as input
\item[ii)] which asymptotic running times are met in the worst case 
\item[iii)] in terms of which parameters (in addition to the output precision $n$).
\end{enumerate}
The present work extends the Type-2 Theory of Effectivity 
and Complexity \cite{WeihrauchComplexity}
to such \emph{parameterized, uniform} claims.
We focus here on operators and upper complexity bounds,
that is, fully specified provably correct algorithms.

Let us illustrate the relevance of integer parameters, 
and its difference from integral advice \cite{Advice}, 
to real computation of (multivalued) functions:

\begin{myexample} \lab{x:Param1}
\begin{enumerate}
\item[a)]
On the entire real line,
the exponential function and binary addition
and multiplication are computable --- 
but not within time bounded in terms of the output precision $n$ only:
Because already the integral part of
the argument $x$ requires time reading and printing depending 
on (any integer upper bound $k$ on) $x\to\pm\infty$. 
\item[b)]
Restricted to real intervals $[-k;k]$, 
on the other hand, $\exp:|_{[-k;k]}$ is computable
in time polynomial in $\calO(n+k)$,
noting that $\lfloor\exp(k)\rfloor$ has 
binary length of order $k\cdot\log k$. \\
Similarly, binary addition and multiplication
on $[-k;k]\times[-k;k]$ are computable in 
time polynomial in $\calO(n+\log k)$. \\
Here and in the sequel, abbreviate $\log(k):=\lceil\log_2(k+1)\rceil$.
\item[c)]
Given a real symmetric $d\times d$--matrix $A$,
finding (in a multivalued way) some eigenvector is uncomputable.
\item[d)]
However when given, in addition to $A$, also the integer
$\ell(A):=\min_{\lambda\in\sigma(A)}
\lfloor\log_2\dim\kernel(A-\lambda\id)\rfloor
\in\{0,\ldots,\lfloor1+\log_2 d\rfloor\}$,
some eigenvector can be found computably and,
\item[e)]
given the integer $L(A):=\Card\sigma(A)\in\{1,\ldots,d\}$, 
even an entire basis
of eigenvectors, i.e. the matrix can be diagonalized
computably.
\end{enumerate}
\end{myexample}
The rest of this section recalls the state of the art on
real complexity theory with its central notions and definitions.

\subsection{Computability and Complexity Theory of Real Numbers and Sequences}
A real number $x$ is considered \emph{computable}
(that is an element of $\IRc$) if
any (equivalently: all) of the following conditions hold
\mycite{\S4.1+Lemma~4.2.1}{Weihrauch}:
\begin{description}
\item[a1)] The (or any) binary expansion $b_n\in\{0,1\}$
of $x=\sum_{n\geq-N} b_n2^{-n}$ is recursive.
\item[a2)] A Turing machine can produce dyadic 
 approximations to $x$ up to prescribable error $2^{-n}$,
 that is, compute an integer sequence 
 $a:\IN\ni n\mapsto a_n\in\IZ$
 (output encoded in binary) with $|x-a_n/2^{n+1}|\leq2^{-n}$;
 cmp. e.g. \mycite{Definition~3.35}{Eigenwillig}.
\item[a3)] A Turing machine can output rational sequences
 $(c_n)$ and $(\epsilon_n)$ with $|x-c_n|\leq\epsilon_n\to0$;
 cmp. e.g. \mycite{Definition~0.3}{PER}.
\item[a4)] $x$ admits a recursive \emph{signed digit expansion},
 that is a sequence $s_n\in\{\sdzero,-\sdone,+\sdone\}$ with $x=\sum_{n=-N} s_n2^{-n}$.
\end{description}
Moreover the above conditions are strictly stronger than the following
\begin{description}
\item[a5)] A Turing machine can output rational sequences
 $(c'_n)$ with $c'_n\to x$.
\end{description}
In fact relaxing to (a5) is equivalent to permitting 
oracle access to the Halting problem \cite{Ho}!
We also record that the equivalence among (a2) to (a4),
but not to (a1), holds even uniformly.

\begin{myremark} \lab{r:TTE}
TTE constitutes a convenient framework for naturally inducing, 
combining, and comparing encodings of general
continuous universes $X$ like $\IR$:
\begin{enumerate}
\item[a)]
Formally, a \emph{representation} $\xi$ of $X$
is a surjective partial mapping from
Cantor space $\{\sdzero,\sdone\}^\omega$ to $X$
\mycite{\S3}{Weihrauch}. 
\item[b)]
And $\xi$--computing some $x$
means to output a $\xi$--name (out of possibly many) of $x$,
that is, print onto some non-rewritable \cite{Revising} 
tape an infinite binary sequence 
$\bar\sigma=(\sigma_n)_{_n}\in\{\sdzero,\sdone\}^\omega$
with $\xi(\bar\sigma)=x$.
\item[c)]
Representations $\xi$ of $X$ and $\upsilon$ of $Y$
canonically give rise to one, called $\xi\times\upsilon$, of $X\times Y$:
A $\xi\times\upsilon$--name of $(x,y)$ is an infinite binary sequence
$(\sigma_0,\tau_0,\sigma_1,\tau_1,\ldots)$ 
where $\bar\sigma$ constitutes a $\xi$--name of $x$
and $\bar\tau$ an $\upsilon$--name of $y$. 
\item[d)]
Similarly, a countable family of representations $\xi_j$ of $X_j$
canonically induces the representation $\prod_j\xi_j$ of $\prod_j X_j$
as follows: For a fixed computable bijection 
$\langle\,\cdot\,,\,\cdot\,\rangle:\IN\times\IN\to\IN$,
$\bar\sigma=(\sigma_k)_{_k}$ is a $\prod_j\xi_j$--name of 
$\prod_j x_j$ iff
$(\sigma_{\langle n,j\rangle})_{_n}$ is a $\xi_j$--name
of $x_j$ for every $j$.
\item[e)]
The representation $\binary$ of $\IN$ encodes integers in
binary self-delimited with trailing zeros:
$\binary(\sdone\,b_0\,\sdone\,b_1\,\cdots\,\sdone\,b_n\,\sdzero^\omega):=\sum_{j=0}^n b_j2^j$. \\
Representation $\unary$ of $\IN$ encodes integers in unary:
$\unary(\sdone^n\,\sdzero^\omega):=n$.
\item[f)]
Note that the bijection $\bin:\{\sdzero,\sdone\}^*\to\IZ$ with
\begin{eqnarray*}
(\sdone\,w_{n-1}\cdots w_1w_0)&\mapsto& \phantom{1-(}w_0+2w_1+\cdots+2^{n-1}w_{n-1}+2^n, \\
(\sdzero\,w_{n-1}\cdots w_1w_0)&\mapsto& 1-(w_0+2w_1+\cdots+2^{n-1}w_{n-1}+2^n)
\end{eqnarray*}
itself is technically not a representation -- but 
useful as a building block. By abuse of name we
denote its inverse also by $\bin$.
\end{enumerate}
\end{myremark}
Concerning real numbers, the equivalences between (a1) to (a4)
refer to computability.
Not too surprisingly, they break up under the finer perspective of complexity; 
some notions even become useless \mycite{Example~7.2.1}{Weihrauch}. 
On the other hand, (a2) and (a4) do lead to uniformly 
quadratic-time equivalent notions \mycite{Example~7.2.14}{Weihrauch}.
Somehow arbitrarily, but in agreement with \iRRAM's internal data
representation, we focus on the following formalization in TTE:

\begin{mydefinition} \label{d:NumberComplex} 
\begin{enumerate}
\item[a)]
Call $x\in\IR$ \emph{computable in time $t(n)$}
if a Turing machine can, given $n\in\IN$, produce
within $t(n)$ steps $\vec w\in\{\sdzero,\sdone\}^*$
with $|x-\bin(\vec w)/2^{n+1}|\leq2^{-n}$.
\item[b)]
A $\rhody$--name of $x\in\IR$ is (an infinite sequence of \sdzero's and 
\sdone's encoding similarly to Remark~\ref{r:TTE}f)
a sequence $a_n\in\IZ$ such that $|x-a_n/2^{n+1}|\leq2^{-n}$ holds.
\item[c)]
A $\rhody^2$--name of $z\in\IC$ consists of a $\rhody$--name
according to b) of $\Re(z)$ and one of $\Im(z)$,
both infinite sequences interleaved into a single
(Remark~\ref{r:TTE}c).
\item[d)]
A sequence $(x_j)\subseteq\IR$ is \emph{computable in time $t(n,j)$}
if a Turing machine can, given $n,j\in\IN$, produce
within $t(n,j)$ steps some
$a\in\IZ$ (in binary) with $|x_j-a/2^{n+1}|\leq2^{-n}$.
\item[e)]
A $\rhody^\omega$--name of a real sequence $(x_j)$ is a
sequence $(a_j)\in\IZ$, encoded into an infinite binary string
as above, such that $|x_j-a_{\langle n,j\rangle}|\leq2^{-n}$.
\item[f)]
Here and in the sequel fix the \emph{Cantor pairing function}
$\langle n,j\rangle=n+(n+j)\cdot(n+j+1)/2$.
\end{enumerate}
\end{mydefinition}
Note that the running time bound in a) is expressed in terms of
the output precision $n$; which corresponds to classical complexity
theory based on input length by having the argument $n$ encoded in unary.
Indeed, the output $a\in\IZ$ will have length $\Theta(n+\log|x|)$
in Landau notation.
Within a $\rhody$--name of $x$ according to b),
the $\calO(\log|x|+n)$ digits of $a_n$ similarly start
roughly at position $n\cdot(n+\log|x|)$.
Similarly, since $\langle m,j\rangle\leq\calO(m^2+j^2)$,
the digits of $a_{\langle m,j\rangle}$ start
within a $\rhody^\omega$--name of $(x_j)$
at position $n\approx (m^2+j^2)\cdot(m+\max_{i\leq j}\log|x_{i}|)$.
We also record that both the mapping
$(m,j)\mapsto\langle m,j\rangle$ and its inverse 
are classically quadratic-time computable.

\subsection{Type-2 Computability and Complexity Theory: Functions} \lab{s:FuncComplex}
Concerning a real function $f:[0;1]\to\IR$, the following 
conditions are well-known equivalent and thus a reasonable notion
of computability
\cite{Grzegorczyk}, \mycite{\S0.7}{PER}, \mycite{\S6.1}{Weihrauch}, 
\mycite{\S2.3}{Ko91}, \mycite{D\'{e}finition~2.1.1}{Lombardi}.
\begin{description}
\item[b1)] A Turing machine can output a sequence of 
 (degrees and lists of coefficients of) univariate dyadic
 polynomials $P_n\in\ID[X]$ such that
 $\forall n\in\IN:\|f-P_n\|\leq2^{-n}$ holds,
 where $\ID:=\bigcup_n\ID_n$ and $\|f\|:=\max\{|f(x)|:0\leq x\leq 1\}$.
\item[b2)] A Turing machine can, upon input of 
 every $\rhody$--name of some $x\in[0;1]$,
 output a $\rhody$--name of $f(x)$.
\item[b3)] There exists an oracle Turing machine 
 $\calM^?$ which, upon input of each $n\in\IN$ and for every
 (discrete function) oracle answering queries ``$m\in\IN$'' 
 with some $a\in\ID_{m+1}$ such that $|x-a|\leq2^{-m}$,
 prints some $b\in\ID_{n+1}$ with $|f(x)-b|\leq2^{-n}$.
\end{description}
In particular every (even relatively, i.e. oracle) computable 
$f:[0;1]\to\IR$ is necessarily continuous. This gives rise to
two causes for noncomputability: a (classical) recursion theoretic
and a topological one; cmp. Figure~\ref{f:superpol}b).
More precisely, every $f\in C[0;1]$ is computable relative so some 
appropriate oracle; cmp. \mycite{Theorem~3.2.11}{Weihrauch}.

\begin{myremark} \label{r:Uniformity}
\begin{enumerate}
\item[a)]
The above (equivalent) notions of computability are uniform 
in the sense that $x$ is considered given as argument
from which the value $f(x)$ has to be produced.
The \emph{non}uniform relaxation asks of whether,
for every $x\in[0;1]\cap\IRc$, 
$f(x)$ is again computable:
possibly requiring separate algorithms 
to do so for each of the (countably many) $x\in[0;1]\cap\IRc$.
For instance the totally discontinuous Dirichlet function is
trivially computable in this (thus too weak) sense.
\item[b)]
Defining computable functions also on non-computable arguments
(as above) is known to avoid certain pathologies in the partial
case \mycite{Example~9.6.5}{Weihrauch}
while making no difference on well-behaved domains
\mycite{Theorem~9.6.6}{Weihrauch}.
\item[c)]
For spaces $X,Y$ with respective representations $\xi,\upsilon$,
TTE defines a $(\xi,\upsilon)$--realizer of a (possibly partial 
and/or multivalued) function $f:\subseteq X\toto Y$ to be 
a partial mapping $F:\subseteq\{\sdzero,\sdone\}^\omega\to\{\sdzero,\sdone\}^\omega$
such that, for every $\xi$--name $\bar\sigma$ of every $x\in\dom(f)$,
$\tau:=F(\bar\sigma)$ is a $\upsilon$--name of some $y\in f(x)$. 
And $f$ is called $(\xi,\upsilon)$--computable if it admits
a computable $(\xi,\upsilon)$--realizer.
Notion (b2) thus amounts to $(\rhody,\rhody)$--computability.
\end{enumerate}
\end{myremark}
Concerning complexity, (b2) and (b3) have turned out as 
polytime equivalent but different from (b1);
cf. \mycite{\S8.1}{Ko91} and \mycite{Theorem~9.4.3}{Weihrauch}.
So as opposed to discrete complexity theory, 
running time is not measured in terms of the input length
(which is infinite anyway) but in terms of the output
precision parameter $n$ in (b3); and for (b2)
in terms of the time until the $n$-th digit of the infinite
binary output string appears:
in both cases uniformly in (i.e. w.r.t. the worst-case over all)
$x\in[0;1]$. Indeed the dependence on $x$
can be removed by taking the 
maximum running time over $[0;1]$, a compact set
\mycite{Theorem~7.2.7}{Weihrauch}.
On non-compact domains, on the other hand, the running time will in 
general admit no bound depending on the output precision $n$ only;
recall Example~\ref{x:Param1}a) and see \mycite{Exercise~7.2.10}{Weihrauch}. 

The qualitative topological condition of continuity is a 
prerequisite to computability of a function. Its quantitative
refinement capturing sensitivity on the other hand corresponds
to a lower bound on the complexity \mycite{Theorem~2.19}{Ko91}:

\begin{fact} \lab{f:Modulus}
If $f:[0;1]\to\IR$ is computable in the sense of (b3)
within time $t(n)$, then the function 
$\mu:\IN\ni n\mapsto t(n+2)\in\IN$
constitutes a \textsf{modulus of continuity} to $f$
in the sense that the following holds:
\begin{equation} \label{e:Modulus}
\forall x,y\in\dom(f): \quad |x-y|\leq2^{-\mu(n)} \;\Rightarrow\;
|f(x)-f(y)|\leq2^{-n} \enspace . 
\end{equation}
In particular every polytime computable function
necessarily has a polynomial modulus of continuity; 
and, conversely, each $f:[0;1]\to\IR$ satisfying 
Equation~(\ref{e:Modulus}) for polynomial $\mu$
is polytime computable relative to some oracle.
\end{fact}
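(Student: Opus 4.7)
The plan is to prove the two directions separately; both rest on a single observation about oracle machines running in bounded time. For the forward direction I would use the classical \textbf{common-oracle} technique: within $t(n+2)$ steps, the oracle machine $\calM^?$ witnessing (b3) can only issue queries whose precision $m$ is bounded by $t(n+2)$, since writing the query parameter and reading back the dyadic response each cost $\Omega(m)$ steps. Given $x,y\in\dom(f)$ with $|x-y|\leq 2^{-\mu(n)}=2^{-t(n+2)}$, the goal is to show that any sequence of oracle responses honest about $x$ can be made simultaneously honest about $y$, so that $\calM^?$ cannot distinguish $x$ from $y$ on input $n+2$.

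Concretely, for each query at precision $m\leq t(n+2)$ the intervals $[x-2^{-m},x+2^{-m}]$ and $[y-2^{-m},y+2^{-m}]$ overlap in a subinterval of length $\geq 2^{-m}$ (using $|x-y|\leq 2^{-m}$), which therefore contains some $a\in\ID_{m+1}$; any interval of length $2^{-m}$ in $\IR$ meets the grid $\tfrac{1}{2^{m+1}}\IZ$. Choosing such common dyadics as the responses defines an oracle $\phi$ that is a valid $\rhody$-oracle for both $x$ and $y$ throughout the computation. Hence $\calM^\phi(n+2)$ outputs a single dyadic $b$ satisfying both $|f(x)-b|\leq 2^{-(n+2)}$ and $|f(y)-b|\leq 2^{-(n+2)}$, and the triangle inequality yields $|f(x)-f(y)|\leq 2^{-(n+1)}\leq 2^{-n}$. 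The polynomiality consequence is then immediate, since $\mu(n)=t(n+2)$ inherits polynomial growth from $t$.

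For the converse, suppose $f$ satisfies~(\ref{e:Modulus}) for a polynomial $\mu$. I would introduce an oracle that, given any dyadic $d\in\ID\cap[0;1]$ (encoded in binary) and any $k\in\IN$ (in unary), returns some $e\in\ID_{k+1}$ with $|f(d)-e|\leq 2^{-k}$; such an oracle exists since $f$ is bounded on $[0;1]$ by continuity. The algorithm computing $f$ to precision $2^{-n}$ first reads the input oracle for $x$ at precision $\mu(n+1)$, obtaining a dyadic $d$ with $|x-d|\leq 2^{-\mu(n+1)}$, so that $|f(x)-f(d)|\leq 2^{-(n+1)}$ by~(\ref{e:Modulus}); it then queries the auxiliary oracle on $(d,n+1)$ to obtain $e$ with $|f(d)-e|\leq 2^{-(n+1)}$ and outputs $e$. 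Since $\mu$ is polynomial and the oracle responses have length polynomial in $n$, the whole procedure runs in time polynomial in $n$.

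The main obstacle is not conceptual but bookkeeping: one has to verify carefully that within $t(n+2)$ steps $\calM^?$ truly cannot query beyond precision $t(n+2)$ (this depends on the concrete model in which precision parameters are written in unary and dyadic responses are given bit-by-bit on a query tape), and that the ``$+2$'' slack in $\mu(n)=t(n+2)$ simultaneously absorbs (i) the factor of two produced by the triangle inequality and (ii) minor overhead for writing the input precision and the query parameters. Neither is deep, but miscounting either would degrade the claimed bound.
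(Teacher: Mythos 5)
Your proposal is correct and is essentially the standard argument: the paper itself gives no proof of this Fact but cites it as Theorem~2.19 of \cite{Ko91}, and your common-oracle (adversary) argument for the modulus bound together with the table-lookup oracle for the converse is exactly the classical proof given there. The two small points you flag (queries of precision at most $t(n+2)$ within $t(n+2)$ steps, and completing the common finite answer pattern to total valid oracles for $x$ and for $y$, plus clamping dyadic approximations into $[0;1]$ in the converse) are indeed only routine bookkeeping in the model of (b3) and do not affect correctness.
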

The \textsf{Time Hierarchy Theorem} in classical complexity theory
constructs a set $E\subseteq\IN$ decidable in exponential but not in
polynomial time. Appropriately encoded, this in turn yields 
a real number $y[E]$ of similar complexity
\mycite{Lemma~7.2.9}{Weihrauch}; and the same holds
for the constant function $[0;1]\ni x\mapsto y[E]$
\mycite{Corollary~7.2.10}{Weihrauch}:
computable within exponential but not in polynomial time.
We remark that, again and as opposed to 
this recursion theoretic construction, 
also quantitative topological reasons
lead to an (even explicitly given) function
with such properties; cmp. Figure~\ref{f:superpol}b):

\begin{myexample} \lab{x:Modulus}
The following function is computable in exponential
time, but not in polynomial time --- and oracles do not help:
\[ f:(0;1]\;\ni\; x\;\mapsto\; 1/\ln(e/x)\;\in\;(0;1], \qquad
f(0)=0 \enspace . \]
\end{myexample}
\begin{proof}
Note that monotone $f$ is
obviously computable on $(0;1]$, 
and continuous in 0;
in fact effectively continuous but 
with exponential modulus of continuity.
More precisely, $f(2^{-m})=1/(1+m\cdot\ln2)\overset{!}{\leq}2^{-n}
\Leftrightarrow m\geq(2^n-1)/\ln2$.
Therefore arguments $x=2^{-m}$ must be known from zero
(i.e. and with precision at least $m$ exponential in $n$)
in order to assert the value $f(x)$ 
different from $f(0)=0$ up to error $2^{-n}$.
\qed\end{proof}

\begin{figure}[htb]
\begin{minipage}[c]{0.52\textwidth}
\includegraphics[width=0.99\textwidth]{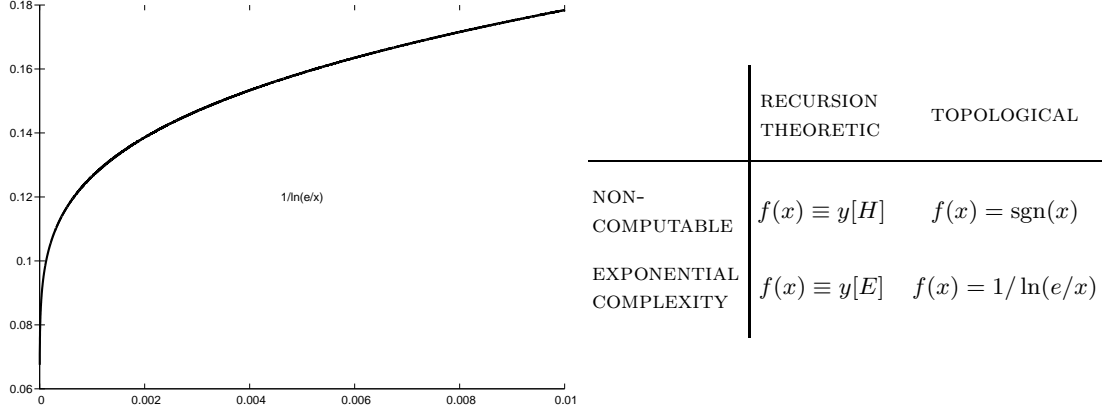}
\end{minipage}%
\begin{minipage}[c]{0.43\textwidth}
\sc
\begin{tabular}{c|@{\;}c@{\quad}c}
\\
 & \begin{minipage}[c]{12ex}
recursion theoretic \end{minipage} 
& topological   \\[3ex]
\hline \\
\begin{minipage}[c]{15ex}
non-computable 
\end{minipage}
&  $f(x)\equiv y[H]$ &  $f(x)=\sgn(x)$  \\[4ex]
\noindent
\begin{minipage}[c]{15ex}
\noindent
exponential complexity \end{minipage} 
&   $f(x)\equiv y[E]$ &  $f(x)=1/\ln(e/x)$ \\ ~
\end{tabular}
\end{minipage}
\caption{\label{f:superpol}a)~The graph of $f(x)=1/\ln(e/x)$ 
from Example~\ref{x:Modulus}
demonstrating its exponential rise from 0. \newline
b)~Lower bound techniques in real function computation;
$H\subseteq\IN$ is the Halting problem
and $\IN\supseteq E\in\calEXP\setminus\calP$.}
\end{figure}
Concerning computational complexity on spaces other than $\IR$, consider

\begin{mydefinition} \lab{d:MetricComplex}
\begin{enumerate}
\item[a)] A partial function $F:\subseteq\{\sdzero,\sdone\}^\omega\to\{\sdzero,\sdone\}^\omega$
is \emph{computable in time} $t:\IN\to\IN$ if a Type-2 Machine can,
given $\bar\sigma\in\dom(F)$, produce $\bar\tau=F(\bar\sigma)$
such that the $n$-th symbol of $\bar\tau$ appears within
$t(n)$ steps.
\item[b)] For representations $\xi$ of $X$ and $\upsilon$ of $Y$,
a (possibly partial and multivalued) function $f:\subseteq X\toto Y$
is \emph{computable in time} $t:\IN\to\IN$ if it has a 
$(\xi,\upsilon)$--realizer $F$ according to a).
\end{enumerate}
\end{mydefinition}
\begin{figure}[htb]
\begin{minipage}[c]{0.44\textwidth}
\includegraphics[width=0.97\textwidth,height=0.33\textheight]{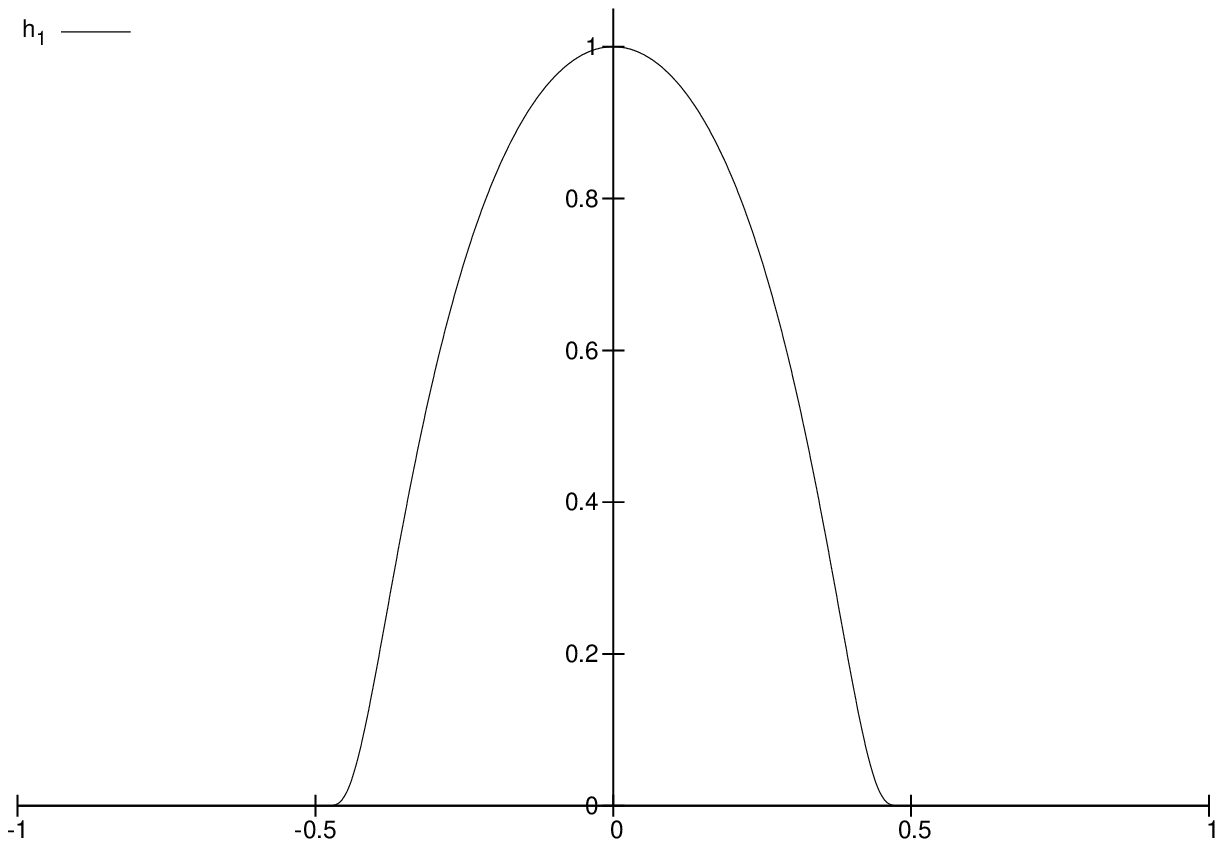}
\end{minipage} \hfill
\begin{minipage}[c]{0.55\textwidth}
\includegraphics[width=0.98\textwidth]{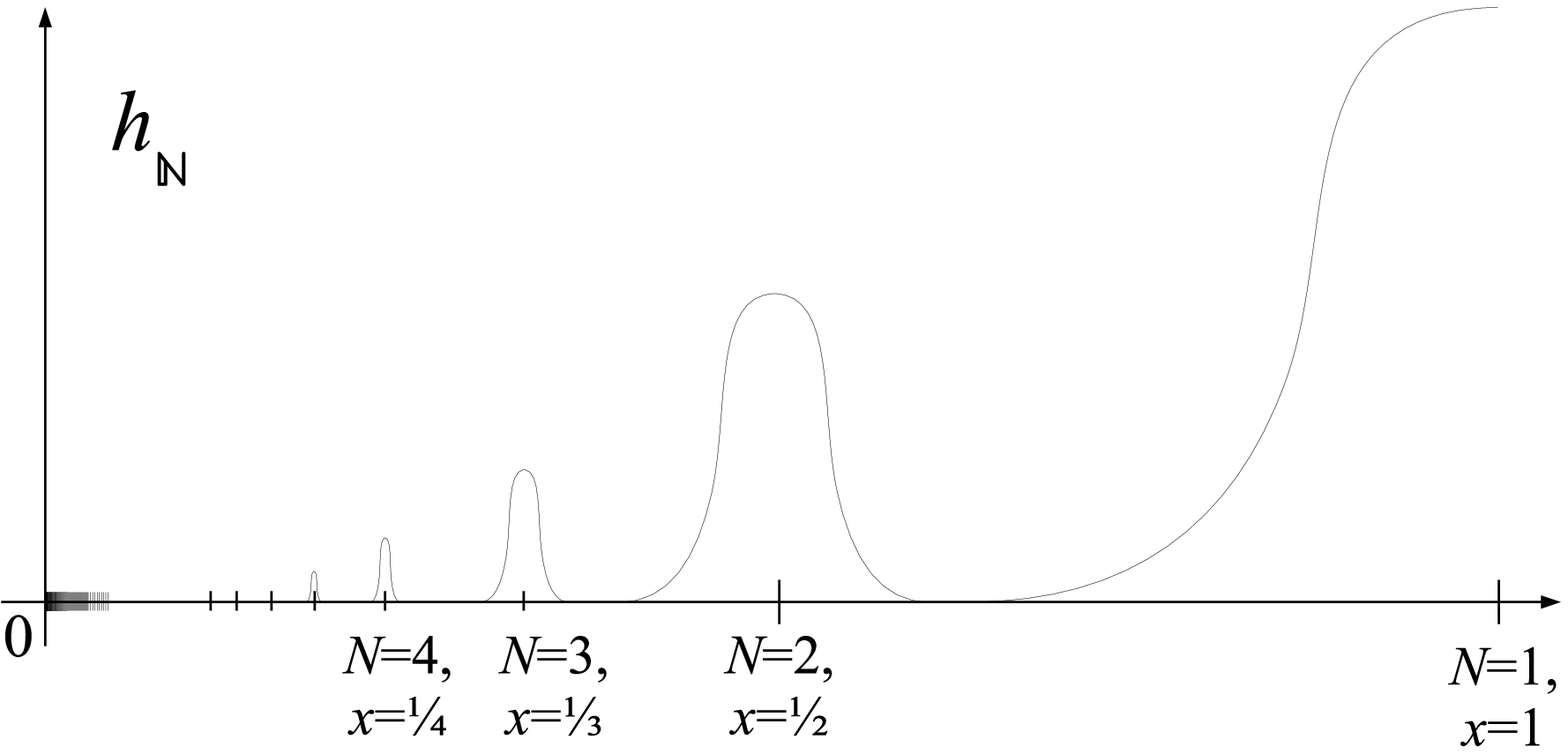} \\[2ex]
\includegraphics[width=0.98\textwidth]{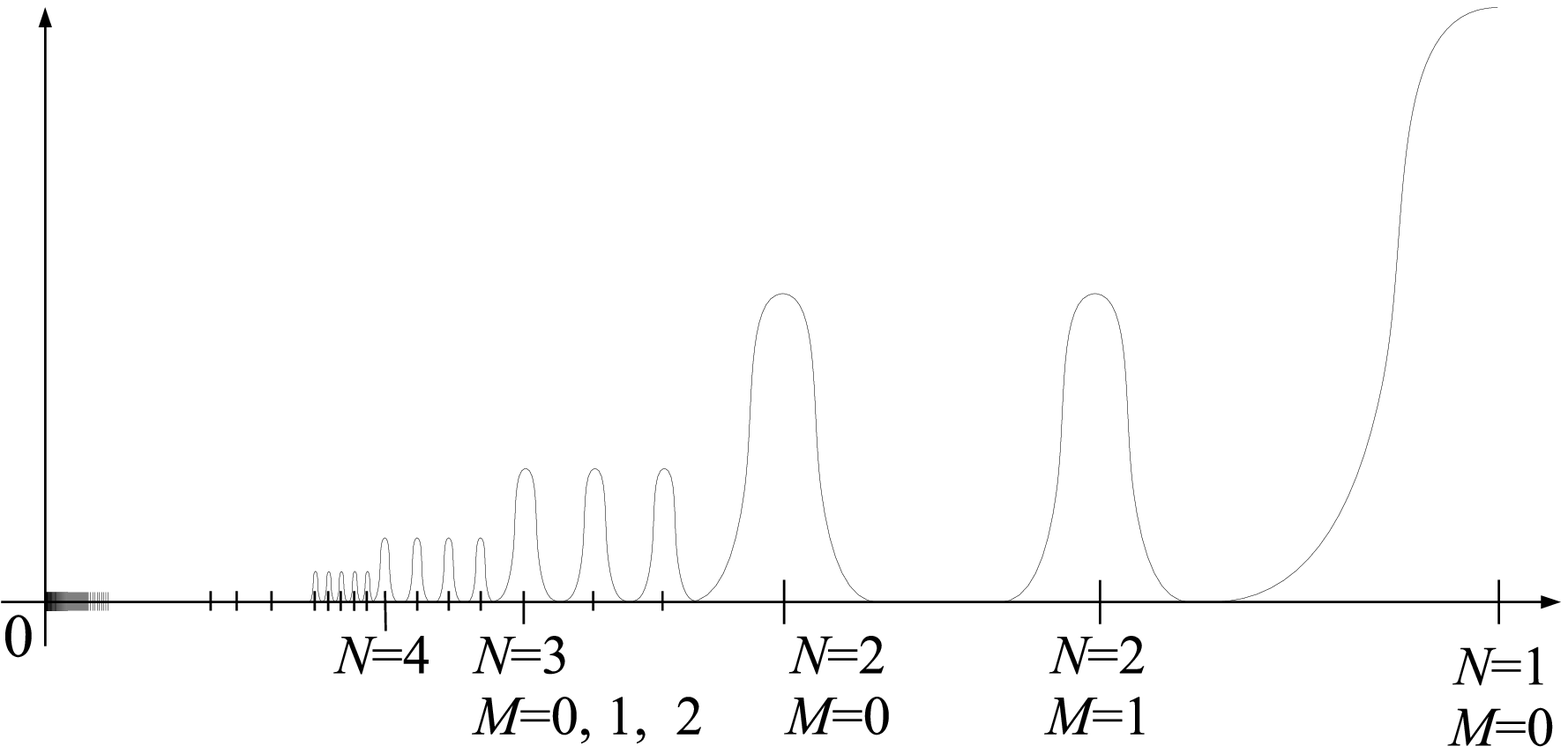}
\end{minipage}
\caption{\label{f:maxNP}
Encoding decision problems into
smooth functions: Illustrating the proof of Example~\ref{x:maxNP}.
}
\end{figure}
As already pointed out, this notion of complexity may be trivial
for some $\xi$ and $\upsilon$. (Meta-)Conditions on the representations
under consideration are devised in \cite{WeihrauchComplexity}.
We conclude this subsection by recalling the construction 
underlying \mycite{Theorem~3.7}{Ko91}
of a polytime computable smooth $h:[0;1]\to[0;1]$ 
for which the function $\Max(h):[0;1]\to[0;1]$ is not polytime
unless $\calP=\calNP$:

\begin{myexample} \lab{x:maxNP}
\begin{enumerate}
\item[a)]
Recall from Example~\ref{x:Upper} that the function 
\[  h_1(x)\;:=\;\exp\big(\tfrac{4x^2}{4x^2-1}\big) \quad\text{ for }|x|\leq\tfrac{1}{2},
\qquad h_1(x):=0 \quad\text{ for }|x|\geq\tfrac{1}{2} \]
is smooth on $[-1;1]$ and vanishes outside $\big[-\tfrac{1}{2};\tfrac{1}{2}\big]$.
Similarly, the scaled and shifted
$h_{N}(x):=h(2N^2\cdot x-2N)/N^{\ln N}$ is smooth 
with maximum $\exp(-\ln^2 N)$ attained at $x=1/N$
and vanishes outside 
$\big[\tfrac{1}{N}-\tfrac{1}{4N^2};\tfrac{1}{N}+\tfrac{1}{4N^2}\big]$.
Note that $\tfrac{1}{N}-\tfrac{1}{4N^2}\geq\tfrac{1}{N+1}+\tfrac{1}{4(N+1)^2}$ 
holds for $N\in\IN$, hence
$h_{N}$ has support disjoint to that of $h_{N'}$ 
whenever $N,N'\in\IN$ are distinct.
Moreover, for every $j\in\IN_0$, 
$h_{N}^{(j)}(x)=h_1^{(j)}(2N^2\cdot x-2N)\cdot N^{2j}/N^{\ln N}\to0$
uniformly in $x$ as $N\to\infty$.
Therefore (and abusing notation)
the function $h_{L}:=\sum_{N\in L} h_{N}:[0;1]\to[0;1]$
is smooth for every $L\subseteq\IN$; 
and the question ``$N\in L$?'' reduces to
evaluating $h_{L}(1/N)$ up to absolute error $2^{-n}$
for $n:=(\ln N)^2\cdot\log e$ polynomial in the length of $N$:
Modulo polytime, the computational complexity of $h_{L}$ 
coincides with that of $L$ as a discrete binary decision problem.
\item[b)]
We argue that also the computational complexity of $\Max(h_L)$ 
coincides (modulo polytime) with that of $L$.
To this end note that, for $x\leq 1$,
$\exp(x/2)\leq 1+x$ and thus $\ln(1+x)\geq x/2$:
\begin{eqnarray*} \ln^2(N+1)\ln^2(N)
&=& \big(\ln(N+1)+\ln(N)\big)\cdot\ln\big(1+\tfrac{1}{N}\big)
\;\geq\; 2\ln(N)/(2N)\geq 1/N \enspace , \\[1ex]
e^{-\ln^2(N)}-e^{-\ln^2(N+1)}
&=& e^{-\ln^2N}\cdot\Big(1-e^{-\big(\ln^2(N+1)-\ln^2(N)\big)}\Big)  \\
&\geq& e^{-\ln^2N}\cdot\big(1-e^{1/N}\big) \;\geq\; e^{-\ln^2N-\ln N-1}
\end{eqnarray*}
because $1-e^{-1/N}\geq\tfrac{1}{eN}=e^{-\ln N-1}$.
Note observe that 
$\Max\big(h_L\big)(1/N)=\exp\big(-\ln^2(N)\big)$ holds if $N\in L$,
whereas $\Max\big(h_L\big)(1/N)\leq\exp\big(-\ln^2(N+1)\big)$ if $N\not\in L$.
And the above estimates demonstrate that both cases can be distinguished
by evaluating $\Max\big(h_L\big)(1/N)$ up to absolute error $2^{-n}$
for $n:=\calO(\ln^2N)$ polynomial in the binary length of $N$.
\item[c)]
Slightly more generally consider 
\[
L \;\subseteq\; \{ (N,M) : N\in\IN, \IN_0\ni M<N \}, \qquad
K \;:=\; \{ N : \exists M: (N,M)\in L \}  \]
and observe that the condition $M<N$ on integer values
implies the binary length of $M$ to be bounded by that of $N$
from which in turn follows $M<2N$; hence this condition
can be met by `padding' $N$ with polynomially many dummy digits
in order to obtain a $\calNP$--complete $K$ with $L\in\calP$.
Now let 
\[
h_{N,M}(x)\;:=\;h_1(3N^3\cdot x-3N^2-M)/N^{\ln N}, \qquad
h_L:=\sum\nolimits_{(N,M)\in L} h_{N,M}  \]
and note as in a) that $h_{N,M}$ is smooth with maximum $\exp(-\ln^2 N)$
attained at $x_{N,M}:=\tfrac{1}{N}+\tfrac{M}{3N^3}$ and vanishes outside
$\Big[\frac{1}{N}+\frac{M-1/2}{3N^3};\frac{1}{N}+\frac{M+1/2}{3N^3}\Big]$.
Moreover $\tfrac{1}{N}-\tfrac{1}{6N^3}\geq\tfrac{1}{N+1}+\tfrac{N}{3(N+1)^3}+\tfrac{1}{6(N+1)^3}$
shows that $h_{N,M}$ has support disjoint to that of $h_{N',M'}$ for 
distinct $(N,M),(N',M')$ with $M<N$ and $M'<N'$.
Hence $h_L$ is polytime computable iff $L\in\calP$.
On the other hand it holds
$\Max\big(h_L\big)(x_{N,N-1})=\exp\big(-\ln^2(N)\big)$ in case $N\in K$
and $\Max\big(h_L\big)(x_{N,N-1})\leq\exp\big(-\ln^2(N+1)\big)$ in case $N\not\in K$:
both distinguishable by evaluating $\big(h_L\big)(x_{N,N-1})$ up to absolute
error polynomial in the binary length of $N$.
Therefore polytime evaluation of $\Max(h_L)$ implies $K\in\calP=\calNP$.
\end{enumerate}
\end{myexample}

\subsection{Uniformly Computing Real Operators} \lab{s:Oracle} 
In common terminology, an operator maps functions to 
functions; and a functional maps (real) functions to (real) numbers.
In order to define computability (Remark~\ref{r:Uniformity}c)
and complexity (Definition~\ref{d:MetricComplex}b),
it suffices to choose some representation of
the function space to operate on: 
for integration and maximization 
that is the set $C[0;1]$ of continuous $f:[0;1]\to\IR$.
In view of Section~\ref{s:FuncComplex}, 
(b1) provides a reasonable such encoding 
(which we choose to call $[\rhody\myto\rhody]$):
concerning computability, but not for the refined view of complexity.
In fact any uniform complexity theory of real operators is faced with 

\begin{myproblem}  \lab{p:Problem}
\begin{enumerate}
\item[a)] Even restricted to $C([0;1],[0;1])$,
 that is to continuous functions $f:[0;1]\to[0;1]$,
 the maps $f:[0;1]\ni x\mapsto f(x)\in[0;1]$
  are not computable within time uniformly bounded 
  in terms on the output error $2^{-n}$ only.
\item[b)] Restricted further to the subspace $\Lip_1([0;1],[0;1])$
  of non-expansive such functions
  (i.e. additionally satisfying $|f(x)-f(y)|\leq|x-y|$), there is no
  representation (i.e. a surjection $\delta$ defined on some subset of $\{\sdzero,\sdone\}^\omega$) 
  rendering evaluation or integration computable in uniform time subexponential in $n$.
\end{enumerate}
\end{myproblem}

\begin{figure}[htb]
\includegraphics[width=0.95\textwidth]{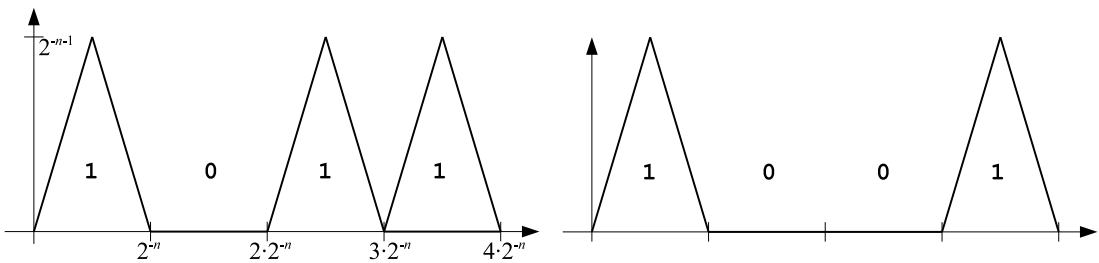}%
\caption{\label{f:manyfunc}Encoding the binary strings \texttt{1011}
and \texttt{1001} into 1-Lipschitz functions $f,g$
with $\|f-g\|$ not too small}
\end{figure}

\begin{proof}
\begin{enumerate}
\item[a)] follows from Fact~\ref{f:Modulus}
since there exist arbitrarily
`steep' continuous functions $f:[0;1]\to[0;1]$.
\item[b)]
Technically speaking, the so-called \emph{width}
of this function space is exponential 
\mycite{Example~6.10}{WeihrauchComplexity}.
More elementarily,
Figure~\ref{f:manyfunc} demonstrates how to encode 
each $\vec b\in\{\sdzero,\sdone\}^n$ into a 
non-expansive function $f_{\vec b}:[0;1]\to[0;1]$
with $\|f_{\vec b}-f_{\vec a}\|\geq2^{-n}$ for $\vec b\neq\vec a$.
These $2^{2^n}$ functions thus can be told apart
by evaluation up to error $2^{-n-2}$ at arguments $(k+1/2)\cdot 2^{-n}$,
$0\leq k<2^n$; alternatively by integrating from
$k\cdot 2^{-n}$ to $(k+1)\cdot2^{-n}$.
Any algorithm doing so within time $\leq t(n)$ can read at most the
first $N:=t(n)$ bits of its given $\delta$--name. However there are
only $2^N$ different initial segments of putative such names.
It thus follows $t(n)\geq2^{\Omega(n)}$.
\qed\end{enumerate}\end{proof}
\mycite{Definition~2.37}{Ko91} solves Problem~\ref{p:Problem}a) 
by permitting the running time
to depend polynomially on both $n$ and $f$'s modulus of continuity
as a second (order) parameter, similar to Example~\ref{x:Param1}b).
We provide an alternative perspective to this resolution
in Subsection~\ref{s:Parameterized} below.

\subsection{Type-3 Complexity Theory} \lab{s:SecondOrder}
Concerning Problem~\ref{p:Problem}b), encodings as strings with sequential access
seem a restriction compared to actual computers random access memory
and subroutine calls providing on-demand information:
Given (an algorithm computing) some $f$,
this permits to approximate the value $f(a)$ for $a\in\ID_{n+1}$ 
up to error $2^{-n}$ with\emph{out} having to (calculate and) 
skip over all order $2^n$ values $f(a')$, $a'\in\ID_{n'}$, $n'<n$.
This can be taken into account by modelling access to $f$ via
oracles that, given $a\in\ID_{n+1}$, report $b\in\ID_{n+1}$
with $|b-f(a)|\leq2^{-n}$; equivalently: 
using not infinite strings (i.e. total mappings $\{1\}^*\to\{\sdzero,\sdone\}$)
but string functions (i.e. total mappings $\{\sdzero,\sdone\}^*\to\{\sdzero,\sdone\}^*$)
to encode $f$. And indeed \cite{AkiSTOC} resolves Problem~\ref{p:Problem}b) 
by extending TTE (Remark~\ref{r:TTE}) and generalizing Cantor space 
as the domain of a representation to (a certain subset of) Baire space:

\begin{mydefinition} \lab{d:Representation2}
\begin{enumerate}
\item[a)]
Let $\Reg$ denote the set of all total functions
$\psi:\{\sdzero,\sdone\}^*\to\{\sdzero,\sdone\}^*$
length-monotone in the sense of verifying
\begin{equation} \label{e:Regular}
|\vec v|\;\leq\;|\vec w|\quad\Rightarrow\quad|\psi(\vec v)|\;\leq\;|\psi(\vec w)| \enspace .
\end{equation}
Write $|\psi|:\IN\to\IN$ for the (thus well-defined)
mapping $|\vec w|\mapsto|\psi(\vec w)|$. 
\item[b)]
A \text{second-order representation} for a space $X$ is a
surjective partial mapping $\tilde\xi:\subseteq\Reg\to X$.
\item[c)]
Any ordinary representation $\xi:\subseteq\{\sdzero,\sdone\}^\omega\to X$ 
\text{induces} a second-order representation $\tilde\xi$ as follows:
Whenever $\bar\sigma$ is a $\xi$--name of $x$, then
$\psi:\{\sdzero,\sdone\}^*\ni\vec v\mapsto\sigma_{|\vec v|}\in\{\sdzero,\sdone\}$ 
is a $\tilde\xi$--name of said $x$.
\item[d)]
Let $\tilde\xi:\subseteq\Reg\to X$ and
$\tilde\upsilon:\subseteq\Reg\to Y$ be second-order representations.
The \text{product} $\tilde\xi\times\tilde\upsilon$ is the second-order representation
whose names for $(x,y)\in X\times Y$ consist of mappings 
\[
\sdzero\,\vec w \;\mapsto\; \sdzero^{m-|\psi_0(\vec w)|}\,\sdone\,\psi_0(\vec w), \qquad
\sdone\,\vec w \;\mapsto\; \sdzero^{m-|\psi_1(\vec w)|}\,\sdone\,\psi_1(\vec w) 
\]
where $\psi_0$ denote a $\tilde\xi$--name of $x$ 
and $\psi_1$ a $\tilde\upsilon$--name of $y$  and
$m:=\max\{|\psi_0(\vec w)|,|\psi_1(\vec w)|\}$.
\item[e)]
More generally, fix a injective linear-time bi-computable length-monotone mapping
$\langle\,\cdot\,,\,\cdot\,\rangle:
\{\sdzero,\sdone\}^*\times\{\sdzero,\sdone\}^*\to\{\sdzero,\sdone\}^*$
and an arbitrary index set $A\subseteq\{\sdzero,\sdone\}^*$
as well as second-order representations $\tilde\xi_{\vec a}:\subseteq\Reg\to X_{\vec a}$,
$\vec a\in A$.
The product $\prod_{\vec a\in A}\tilde\xi_{\vec a}$
is the second-order representation of $\prod_{\vec a\in A} X_{\vec a}$
whose names $\psi$ of $(x_{\vec a})_{_{\vec a\in A}}$ satisfy that 
$\{\sdzero,\sdone\}^*\ni\vec v\mapsto\psi(\langle\vec a,\vec v\rangle)\in\{\sdzero,\sdone\}^*$
constitutes a $\tilde\xi_{\vec a}$--name of $x_{\vec a}$
for each $\vec a\in A$, padded with some initial $\sdzero^*\,\sdone$
to attain common length $|\psi|(n)$ on all arguments 
of length $n=|\langle\vec a,\vec v\rangle|$.
\item[f)]
An oracle Type-2 Machine $\calM^\psi$ may write onto its query tape 
some $\vec w\in\{\sdzero,\sdone\}^*$ which,
when entered the designated query state, will be replaced
with $\vec v:=\psi(\vec w)$. \\ (We implicitly employ some
linear-time bicomputable self-delimited encoding on this tape such as
$(w_1,\ldots,w_n)\mapsto\sdone\,w_1\,\sdone\,w_2\,\ldots\sdone\,w_n\,\sdzero$.)
\item[g)]
$\calM^?$ \textsf{computes} a partial mapping
$\tilde F:\subseteq\{\sdzero,\sdone\}^\omega\times\Reg\to\{\sdzero,\sdone\}^\omega$ 
if, for every $(\bar\sigma,\psi)\in\dom(\tilde F)$,
$\calM^{\psi}$ on input $\bar\sigma$ produces
$\tilde F(\bar\sigma,\psi)$. \\
$\calM^?$ \textsf{computes} a partial mapping
$\tilde G:\subseteq\{\sdzero,\sdone\}^\omega\times\Reg\to\Reg$ if, 
for every $(\bar\sigma,\psi)\in\dom(\tilde G)$ 
and every $\vec v\in\{\sdzero,\sdone\}^*$, 
$\calM^{\psi}$ on input $(\bar\sigma,\vec v)$, $\vec v\in\{\sdzero,\sdone\}^*$,
produces $\tilde G\big(\bar\sigma,\psi\big)(\vec v)\in\{\sdzero,\sdone\}^*$;
cmp. Figure~\ref{f:TTE2}a). 
\item[h)]
For ordinary representation $\xi$ of $X$
and second-order representation $\tilde\upsilon$ of $Y$
and ordinary representation $\zeta$ of $Z$,
$\tilde F:\subseteq\{\sdzero,\sdone\}^\omega\times\Reg\to\{\sdzero,\sdone\}^\omega$ 
is a $(\xi,\tilde\upsilon,\zeta)$--realizer
of a (possibly partial and multivalued)
function $f:\subseteq X\times Y\toto Z$ ~iff,
for every $(x,y)\in\dom(f)$ and every 
$\xi$--name $\bar\sigma$ of $x$
and every $\tilde\upsilon$--name of $y$,
$\tilde F(\psi)$ is a $\zeta$--name of some $z\in f(x,y)$. \\
For second-order representation $\tilde\zeta$ of $Z$,
$\tilde G:\subseteq\{\sdzero,\sdone\}^\omega\times\Reg\to\Reg$ 
is a $(\xi,\tilde\upsilon,\tilde\zeta)$--realizer of $f$ if,
for every $(x,y)\in\dom(f)$ and every 
$\xi$--name $\bar\sigma$ of $x$
and every $\tilde\upsilon$--name of $y$,
$\tilde G(\psi)$ is a $\tilde\zeta$--name of some $z\in f(x,y)$.
\end{enumerate}
\end{mydefinition}
From a mere computability point of view, each representation on
Baire space is equivalent to one on Cantor space
\mycite{Exercise~3.2.17}{Weihrauch}. Concerning complexity,
an approximation of $\psi\in\Reg$ up to error $2^{-n}$
naturally consists of the restriction $\psi|_{\{0,1\}^n\}}$;
cmp. \mycite{Definition~4.1}{Kapron}.
Recall that any $\psi:\{\sdzero,\sdone\}^*\to\{\sdzero,\sdone\}^*$ 
may w.l.o.g. be presumed self-delimiting and then
redefined such as to satisfy Equation~(\ref{e:Regular})
by appropriately `padding'. We require every $\tilde\xi$--name $\psi$ 
to be total, but often the information on $x$ is contained in some
restriction of $\psi$.
Note that an oracle query $\vec w\mapsto\vec v:=\psi(\vec w)$
according to Definition~\ref{d:Representation2}f) may return
a (much) longer answer for some argument $\psi$ than for some 
other $\psi'$; so in order to be able to even read such a reply
for some fixed $n$, the permitted running time bound should 
involve both $n$ and $|\psi|$. Since only the former is a number,
this naturally involves a concept already introduced in  
\cite{Mehlhorn}:

\begin{mydefinition} \lab{d:Poly2}
\begin{enumerate}
\item[a)]
A \emph{second-order} polynomial $P=P(n,\ell)$
is a term composed from variable symbol $n$,
unary function symbol $\ell()$, 
binary function symbols $+$ and $\times$, and positive integer constants.
\item[b)]
Let $T:\IN\times\IN^\IN\to\IN$ be arbitrary.
Oracle machine $\calM^?$ computing 
$\tilde F:\subseteq\{\sdzero,\sdone\}^\omega\times\Reg\to\{\sdzero,\sdone\}^\omega$ 
according to Definition~\ref{d:Representation2}f+g)
operates in \emph{time $T$} if, 
for every $(\bar\sigma,\psi)\in\dom(\tilde F)$ and every $n\in\IN$,
$\calM^{\psi}$ on input $\bar\sigma$ produces
the $n$-th output symbol within at most $T(n,|\psi|)$ steps. \\
$\calM^?$ computing $\tilde G:\subseteq\{\sdzero,\sdone\}^\omega\times\Reg\to\Reg$
operates in \emph{time $T$} if, 
for every $(\bar\sigma,\psi)\in\dom(\tilde G)$,
$\calM^{\psi}$ on input $\vec v\in\{\sdzero,\sdone\}^n$
makes at most $T(n,|\psi|)$ steps.
\item[c)]
For ordinary representations $\xi$ of $X$ and $\zeta$ of $Z$
as well as second-order representations $\tilde\upsilon$ of $Y$
and $\tilde\zeta$ of $Z$, a (possibly partial and multivalued)
function $f:\subseteq X\times Y\toto Z$ is $(\xi,\tilde\upsilon,\zeta)$--computable
in time $T$ ~iff~ $f$ has a $(\xi,\tilde\upsilon,\zeta)$--realizer
$\tilde F$ computable in this time; \\
similarly for $(\xi,\tilde\upsilon,\tilde\zeta)$--computability.
\item[d)]
\emph{Second-order polytime} computability means computability
in time $P$ for some second-order polynomial $P$.
\item[e)]
An ordinary representation $\xi$ of $X$
\emph{polytime reduces} to an ordinary or second-order
representation $\tilde\upsilon$ of $X$
(written $\tilde\xi\reducep\tilde\xi$)
if the identity $\id:X\to X$ is 
$(\tilde\xi,\tilde\upsilon)$--computable within (first-order) polytime. \\
A second-order representation $\tilde\xi$ of $X$
(second-order) \emph{polytime reduces} to an ordinary or second-order
representation $\tilde\upsilon$ of $X$
(written $\tilde\xi\reduceP\tilde\xi$)
if the identity $\id:X\to X$ is 
$(\tilde\xi,\tilde\upsilon)$--computable in second-order polytime. \\
\emph{(Second-order) polytime equivalence} means that 
also the converse(s) hold(s).
\end{enumerate}
\end{mydefinition}
`Long' arguments $\psi$ are thus granted more time to operate on;
see Remark~\ref{r:FunctionAsPoint} below.
Every ordinary (i.e. first-order) bivariate polynomial $p(n,k)$
obviously can be seen as a second-order polynomial 
by identifying $k\in\IN$ with the constant function $\ell(n)\equiv k$;
but not conversely, as illustrated with the example
$n\cdot\ell\Big(n^5+\big(\ell(n^2)\big)^2\Big)$.
Second-order polynomials also constitute the second-level of a 
Grzegorczyk Hierachy on functionals of finite type arising
naturally as bounds in proof-mining \cite[middle of p.33]{PolyProofs};
cmp. also \mycite{\S A}{ProofMining} and \cite{Tent}.
In fact on arguments $\ell$ polynomial of unbounded degree, 
only second-order polynomials satisfy closure under 
both kinds of composition:
\begin{equation} \label{e:Composition2}
\big(Q\circ P\big)(n,\ell) \;\; :=\;\;
Q\big(P(n,\ell),\ell\big) \quad\text{ and }\quad
\big(Q\bullet P\big)(n,\ell) \;\;:=\;\;
Q\big(n,P(\cdot,\ell)\big) \enspace .
\end{equation}
Second-order representations induced by ordinary ones on
the other hand basically use only string functions $\psi$ 
on unary arguments with binary values, i.e. of length 
$|\psi|\equiv1$; and these indeed recover first-order TTE complexity:

\begin{observation} \lab{o:SecondVsFirst}
\begin{enumerate}
\item[a)]
Any bivariate ordinary polynomial $p(n,m)$ can be
bounded by some univariate polynomial in $n+m$. 
Any bivariate second-order polynomial $P\big(n,\ell(),k()\big)$
--- that is a term composed from $n$, $\ell()$, $k()$, $+$, $\times$, and $1$ ---
can be bounded by some $Q\big(n,\ell()+k()\big)$.
\item[b)]
Let $P=P(n,\ell)$ denote a second-order polynomial
and $\ell$ a monic linear function with offset $k$,
i.e., $\ell(n)=n+k\in\IN$.
Then $P(n,\ell)$ boils down to an ordinary bivariate polynomial
in $n$ and $k$.
In particular, $P(0,\equiv k)$ is a polynomial in $k$.
\item[c)]
More generally, fix $d\in\IN$ and consider 
the module $\IN_d$ of polynomials 
$\ell$ over $\IN$ of degree less than $d$:
$\ell(n)=\sum_{j<d} k_j n^j$. \\ Subject to this restriction,
every second-order polynomial $P=P(n,\ell)$ can be bounded
by a univariate polynomial in $n+k_0+\cdots+k_{d-1}$.
\item[d)]
Let $\xi$ denote an ordinary representation of $X$
and $\tilde\xi$ its induced second-order representation. 
Then $\xi$ reduces to $\tilde\xi$ within (first-order) polytime:
$\xi\reducep\tilde\xi$; \\
and $\tilde\xi$ reduces to $\xi$ within first-order polytime:
$\tilde\xi\reduceP\xi$.
\item[e)] 
Let $\xi$ and $\upsilon$ denote ordinary representations of $X$ and $Y$
with induced second-order representations $\tilde\xi$ and $\tilde\upsilon$, respectively.
Then the second-order representation $\widetilde{\xi\times\upsilon}$ of $X\times Y$
induced by $\xi\times\upsilon$ is polytime equivalent to $\tilde\xi\times\tilde\upsilon$.
\item[f)]
For each $j\in\IN$ let $\xi_j$ denote an ordinary representation of $X_j$
and $\tilde\xi_{\sdone^j}$ its induced second-order representation.
Then the second-order representation $\widetilde{\prod_j\xi_j}$ of $\prod_j X_j$
is polytime equivalent to $\prod_{j}\tilde\xi_{\sdone^j}$, that is,
with respect to indices encoded in unary.
\end{enumerate}
\end{observation}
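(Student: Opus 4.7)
The claims (a)--(c) are combinatorial bounds on second-order polynomials, while (d)--(f) concern reductions between representations; the plan is to treat them in that order.

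For (a), I would use structural induction on the syntax tree of $P$. In the first-order case, write $p(n,m)=\sum a_{ij}n^im^j$ and use the trivial estimate $n^im^j\le(n+m)^{i+j}$. In the second-order case, replace each atom $k(t)$ in $P$ by $(\ell+k)(t)$, noting that $\ell$ and $k$ take values in $\IN$ so this only increases the value; the resulting term, viewed as a second-order polynomial in $n$ and $\ell+k$, is the desired $Q$. Parts (b) and (c) are further such inductions: for (b), $\ell(t)=t+k$ sends polynomials in $(n,k)$ to polynomials in $(n,k)$; for (c), $\ell(t)=\sum_{j<d}k_jt^j$ sends polynomials in $(n,k_0,\ldots,k_{d-1})$ to polynomials of the same kind. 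The only care needed is that each application of $\ell$ can multiply the total degree by $d-1$, but the fixed syntactic depth of $P$ keeps the final degree bounded, so that the value is controlled by an ordinary polynomial in $n+k_0+\cdots+k_{d-1}$.

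For (d), I would exhibit two small Type-2 oracle machines. The reduction $\xi\reducep\tilde\xi$ is realized by a machine that, given a $\xi$-name $\bar\sigma\in\{\sdzero,\sdone\}^\omega$ on its input tape, answers an oracle query $\vec v$ by reading a prefix of length $|\vec v|+1$ and printing $\sigma_{|\vec v|}$; this runs in linear time in $|\vec v|$, hence in first-order polytime. For the converse $\tilde\xi\reduceP\xi$, observe that when $\tilde\xi$ is induced by $\xi$ every $\tilde\xi$-name $\psi$ has $|\psi|\equiv 1$, so any second-order bound $T(n,|\psi|)$ collapses to $T(n,1)$, that is, a first-order polynomial in $n$. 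The machine then produces the $n$-th output symbol by issuing the single query $\psi(\sdone^n)$ and copying its one-bit answer.

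For (e) and (f), I would explicitly exhibit mutually inverse polytime translations. In (e), a $\widetilde{\xi\times\upsilon}$-name is a length-one string function producing the $|\vec v|$-th bit of the interleaved sequence $(\sigma_0,\tau_0,\sigma_1,\tau_1,\ldots)$, while a $\tilde\xi\times\tilde\upsilon$-name carries a selector prefix bit choosing between two sub-names. Translation in either direction amounts to routing queries by the selector bit or by the parity of $|\vec v|$, padded as prescribed by Definition~\ref{d:Representation2}d) to reach the required common length. In (f), $\widetilde{\prod_j\xi_j}$-names rely on the numerical Cantor pairing $\langle n,j\rangle$, whereas $\prod_j\tilde\xi_{\sdone^j}$-names use the string pairing $\langle\sdone^j,\vec v\rangle$; both pairings and their inverses are polytime bicomputable, so each side can simulate a query to the other by extracting the integer coordinates, constructing a padding string of the corresponding length, and issuing a single oracle query. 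The main obstacle I expect is purely bookkeeping: the length-equalization padding from Definition~\ref{d:Representation2}d,e) must be respected so that the outputs are genuine names of the intended product representation, and the polytime bound has to absorb the quadratic blow-up of the Cantor pairing hidden inside the unary product; once these alignments are set up correctly, the bounds of (a)--(c) consolidate the remaining estimates to first-order polynomials.
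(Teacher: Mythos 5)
Your proposal is correct and takes essentially the same route as the paper, which declares (a)--(c) immediate and proves (d)--(f) by exactly the query-routing translations you describe: returning $\sigma_{|\vec v|}$ for the induced name and exploiting $|\psi|\equiv1$ for the converse in (d), parity versus selector-bit routing in (e), and conversion between the unary Cantor pairing $\sdone^{\langle n,j\rangle}$ and the string pairing $\langle\sdone^j,\sdone^n\rangle$ in (f). One small wording fix for (a): the $\ell$-atoms must be replaced by $(\ell+k)$-atoms as well (harmless by monotonicity of second-order polynomials), so that the resulting term really is a polynomial in $n$ and $\ell+k$ alone.
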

\begin{proof}
a+b+c) are immediate. 
\begin{enumerate}
\item[d)]
Given $\bar\sigma$ and $\vec v$, it is easy to return $\sigma_{|\vec v|}$,
thus $(\xi,\tilde\xi)$--computing $\id_X$ within polynomial time; \\
similarly for the converse, observing that $\tilde\xi$--names $\psi$
have constant length, see b).
\item[e)]
Observe that a $\widetilde{\xi\times\upsilon}$--name $\psi$ of $(x,y)$ has
$|\psi|\equiv1$ and $\xi\big(\psi(\sdone^{2n})_{_n}\big)=x$ 
and $\upsilon\big(\psi(\sdone^{2n+1})_{_n}\big)=y$;
while a $\tilde\xi\times\tilde\upsilon$--name $\psi$ has 
$|\psi|\equiv1$ and
$\xi\big(\psi(\sdzero\,\sdone^n)_{_n}\big)=x$ and
$\upsilon\big(\psi(\sdone\,\sdone^n)_{_n}\big)=y$.
\item[f)]
A $\widetilde{\prod_j\xi_j}$--name of $(x_j)$ 
has $|\psi|\equiv1$ and 
$\xi_j\big(\psi(\sdone^{\langle n,j\rangle})_{_n}\big)=x_j$;
while a $\big(\prod_{j}\tilde\xi_{\sdone^j}\big)$--name $\psi$ 
has $|\psi|\equiv1$ and 
$\xi_j\big(\psi(\langle\sdone^j,\sdone^n\rangle)_{_n}\big)=x_j$
\qed\end{enumerate}\end{proof}
As with ordinary representations, not every choice of $\tilde\xi$ 
and $\tilde\upsilon$ leads to a sensible notion of complexity.
Concerning the case of continuous functions on Cantor space
and on the real interval $[0;1]$, we record and
report from \mycite{\S4.3}{AkiSTOC}:

\begin{myexample} \lab{x:Representation2}
\begin{enumerate}
\item[a)]
Define a second-order representation 
$\dyrho:\subseteq\Baire\to C[0;1]$ as follows: \\
Length-monotone $\psi:\{\sdzero,\sdone\}^*\to\{\sdzero,\sdone\}^*$ 
is a $\dyrho$--name of bounded partial $f:\subseteq[0;1]\to\IR$ 
\begin{multline*}
\text{iff} \quad \forall\vec w\in\{\sdzero,\sdone\}^*: \qquad \Big(
\dom(f)\cap\ball\big(\bin(\vec w)/2^{|\vec w|+1},2^{-|\vec w|}\big)\neq\emptyset
\quad \Rightarrow \\
\exists x\in\dom(f)\cap\ball\big(\bin(\vec w)/2^{|\vec w|+1},2^{-|\vec w|}\big): \;
\big|\bin\big(\psi(\vec w)\big)/2^{|\vec w|+1}-f(x)\big|\leq 2^{-|\vec w|} 
\Big) \enspace . \end{multline*}
It holds $n+\log\|f\|_\infty\leq|\psi|(n)\leq C_\psi+n$
for all $n\in\IN$ and some $C_{\psi}\in\IN$.
\item[b)]
For $\ell\geq0$, let $\Lip_\ell[0;1]:=\big\{f:[0;1]\to\IR, |f(x)-f(y)|\leq \ell\cdot|x-y|\big\}$
and $\Lip[0;1]:=\bigcup_\ell\Lip_\ell[0;1]$ the class of (globally) Lipschitz-continuous functions.
Define a $\big(\dyrhoLip\big)$--name of $f\in\Lip[0;1]$ to be a mapping
$\{\sdzero,\sdone\}^*\ni\vec w\mapsto\langle\bin(\ell),\psi(\vec w)\rangle\in\{\sdzero,\sdone\}^*$,
where $\psi$ denotes a $\dyrho$--name of $f$ 
and $\ell\in\IN$ some Lipschitz constant to it. 
It thus holds $n+\log\|f\|_\infty+\log\ell\leq|\psi|(n)\leq n+C'_{\psi}$.
\item[c)]
Define a $\tilderhorho$--name 
(in {\rm\cite{AkiSTOC}} called a $\deltabox$--name)
$\psi$ of $f\in C[0;1]$ to be a mapping
$\{\sdzero,\sdone\}^*\;\ni\;\vec w
\;\mapsto\;\sdone^{\mu(|\vec w|}\,\sdzero\,\psi(\vec w)\;\in\{\sdzero,\sdone\}^*$,
where $\psi$ denotes a $\dyrho$--name of $f$
and $\mu:\IN\to\IN$ is a modulus of uniform continuity to it.
It thus holds $\mu(n)+n+\log\|f\|_\infty\leq|\psi|(n)\leq n+C''_{\psi}$.
\item[d)]
Recall that the identity $\id$ is the standard representation of
Cantor space. Inspired by \mycite{Definition~3.6.1}{Roesnick},
consider the following 
second-order (multi\footnote{Since we choose not to include
a description of the functions' domains, a name of some $F$ 
constitutes also one of every restriction of $F$.}-)
representation of the space 
$\UC(\subseteq\{\sdzero,\sdone\}^\omega\to\{\sdzero,\sdone\}^\omega)$
of uniformly continuous partial functions on Cantor space: \\
A $\tilde\eta$--name of $F$ maps 
$\sdone^n\,\sdzero\,\vec x\in\{\sdzero,\sdone\}^*$
to some $\sdone^m\,\sdzero\,\vec y$ such that
$d\big(F(\bar x),\vec y\big)\leq 2^{-n}$
for all $\bar x\in\dom(F)$ with $d(\bar x,\vec x)\leq 2^{-m}$.
Here $d(\bar x,\vec y)=\max\{2^{-j}:x_j\neq y_j\;\vee\; j>|\vec y|\}$ 
induces the the metric on Cantor space. 
\item[e)]
$\tilderhorho$ is computably equivalent to $[\rhody\myto\rhody]$. \\
More precisely the evaluation functional $C[0;1]\times[0;1]\ni (f,x)\mapsto f(x)\in\IR$
is $\big(\tilderhorho\times\widetilde\rhody,\widetilde\rhody\big)$--computable
in second-order polytime; \\
and the evaluation functional 
$\UC(\subseteq\{\sdzero,\sdone\}^\omega\to\{\sdzero,\sdone\}^\omega)\times\{\sdzero,\sdone\}^\omega
\ni (F,\bar x)\mapsto F(\bar x)\in\{\sdzero,\sdone\}^\omega$
is $\big(\tilde\eta\times\tilde\id,\tilde\id\big)$--computable in second-order polytime.
\item[f)]
It holds $\dyrhoLip\reduceP\tilderhorho\big|^{\Lip[0;1]}$ 
with $\Lip[0;1]\subseteq C[0;1]$; \\
but $\tilderhorho\big|^{\Lip[0;1]}\not\reduceP\dyrhoLip$.
\item[g)]
For arbitrary closed $K\subseteq\{\sdzero,\sdone\}^\omega$,
every total polytime-computable $F:K\to\{\sdzero,\sdone\}^\omega$ 
admits a polytime-computable $\tilde\eta$--name $\psi$. 
Similarly, to every `family' of total functionals
$\Lambda:C[0;1]\times[0;1]\to C[0;1]$
$(\tilderhorho\times\widetilde\rhody,\widetilde\rhody)$--computable in second-order polytime,
there exists a second-order polytime 
$(\tilderhorho,\tilderhorho)$--computable operator
$\IO:C[0;1]\to C[0;1]$
such that $\Lambda(f,x)=\calO\big(f\big)(x)$
holds for all $f$ and all $x$.
\end{enumerate}
\end{myexample}
Concerning a), each $f\in C[0;1]$ is indeed bounded;
hence the output of $\psi$ on $\{\sdzero,\sdone\}^n$ can be padded with
leading zeros (as opposed to Remark~\ref{r:TTE}e)
to $\calO(\log\|f\|_\infty)$ binary digits before the point and $n$ after.
On the other hand, $\dyrho$ alone lacks a bound on the slope 
of $f$ necessary to evaluate it on non-dyadic arguments
and leads to the representations in b) and c).
\\
Regarding d), note that the mapping $\sdone^n\,\sdzero\,\vec x\to \sdone^m$
yields a local modulus of continuity to $F$; and the uniform
continuity prerequisite again ensures that $m=m(n,\vec x)$ 
can be chosen to depend only on $n$ and $|\vec x|$, 
thus yielding a length-monotone string function $\psi$.
\\
In e),
functions with `large' modulus of continuity 
and/or `large' values that might take `long' to evaluate
(recall Example~\ref{x:Modulus}) necessarily have 
$\tilderhorho$--names $\psi$ with `large' $|\psi|$.
Explicitly on Cantor space, given $\bar x$ and $n$,
invoke the oracle on $\sdone^n$ and on finite initial segments $\vec x$ 
of $\bar x$ of increasing length $m'=1,2,\ldots$ until the reported 
$\sdone^m\,\sdzero\,\vec y$ satisfies $m\leq m'$, then print $\vec y$:
This asserts $d\big(F(\bar x),\vec y\big)\leq2^{-n}$
and takes $\poly(m,n)$ steps, that is, second-order polytime.
The real case proceeds even more directly by querying
oracle $\psi$ for $m:=\mu(n+1)$ and then for $f(q)$ 
with $q\in\ID_{m}$ and $|x-q|\leq2^{-n-1}$.
\\
For the polytime reduction in f),
verify that $\mu(n)=n+\log\ell$ is a modulus of continuity to
$f\in\Lip_{\ell}[0;1]$. Concerning failure of the converse, 
$\ell:=2^{\max_n \mu(n)-n}$
could (if finite) serve a Lipschitz constant to $f$ but is
clearly not computable from finitely many queries to $\mu$.
We postpone the formal proof to Example~\ref{x:Param3}h). 
\\
Turning to g), computing $F$ in polytime means calculating the first 
$n$ digits $\vec y$ of $F(\bar x)$ from $\bar x$ in time polynomial in $n$;
and simulating this computation while keeping track of the number
$m$ of digits $\vec x$ of $\bar x$ thus read essentially yields 
the $\tilde\eta$--name $\psi$.
The difficulty consists in algorithmically finding some appropriate
padding to obtain $\psi$ length-monotone in the sense of Equation~(\ref{e:Regular}).
For co-r.e. $K$, such a bound is at least computable
\mycite{Theorem~5.5}{WeihrauchComplexity};
however in our non-uniform polytime setting, 
it constitutes some fixed polynomial 
and can be stored in the algorithm.
For fixed $f$, the real case proceeds similarly
and is easily seen to run in second-order polytime
uniformly in $f$.
\begin{myremark} \lab{r:FunctionAsPoint}
\begin{enumerate}
\item[a)]
We follow {\rm\cite{Mehlhorn,Kapron,AkiSTOC}} in defining the complexity 
of operators such that computations on `long' arguments
are granted more running time and still be considered polynomial,
thus avoiding Problem~\ref{p:Problem}a).
This approach resembles \mycite{D\'{e}finition~2.1.7}{Lombardi} where, 
too, computations on `long' arguments are granted more time to achieve a desired
output precision of $2^{-n}$; cmp. \mycite{\S7}{WeihrauchComplexity}.
Note, however, that \mycite{D\'{e}finition~2.2.9}{Lombardi} 
refers only to sequences of functions (cmp. Definition~\ref{d:Representation2}d)
and, in spite of \mycite{Theor\'{e}me~5.2.13}{Lombardi}, is therefore not a
fully uniform notion of complexity for operators.
A related concept, \emph{hereditarily polynomial bounded analysis} 
\cite{ProofTheory,Oliva} permits polynomially bounded quantification
and thus climbing up \person{Stockmeyer}'s polynomial hierarchy.
\item[b)]
Some $f\in\Lip_1[0;1]$ can be regarded 
\begin{enumerate}
\item[i)] on the one hand as 
a transformation on real numbers  \quad and
\item[ii)] on the other hand as a point in a separable metric space.
\end{enumerate}
\noindent
Both views induce `natural' notions of computability and complexity
(Definition~\ref{d:MetricComplex}).
Now i) and ii) are equivalent concerning the former
\mycite{\S6.1}{Weihrauch}; and so are they when
equipping $\Lip_1[0;1]$ with the second-order representation $\dyrho$;
cmp. Lemma~4.9 in the journal version of {\rm\cite{AkiSTOC}}.
Indeed it seems desirable that type conversion
(the \textsf{utm} and \textsf{smn} properties)
be not just computable \mycite{Theorem~2.3.5}{Weihrauch} 
but within uniform polytime so (Example~\ref{x:Representation2}e+f)
--- and, in view of Remark~\ref{r:Uniformity}b), not merely for 
polytime--computable functions \mycite{Theorem~8.13}{Ko91}.
\end{enumerate}
\end{myremark}
\begin{figure}[htb]
\begin{center}
\includegraphics[width=0.25\textwidth]{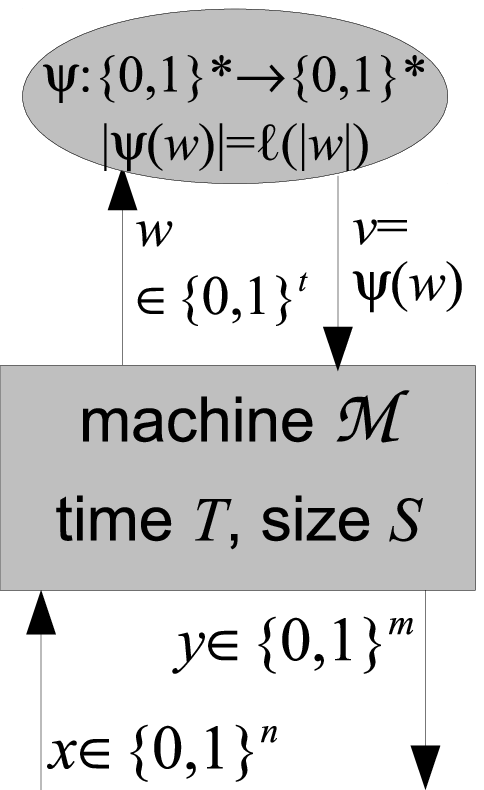}\qquad\qquad\qquad
\includegraphics[width=0.22\textwidth]{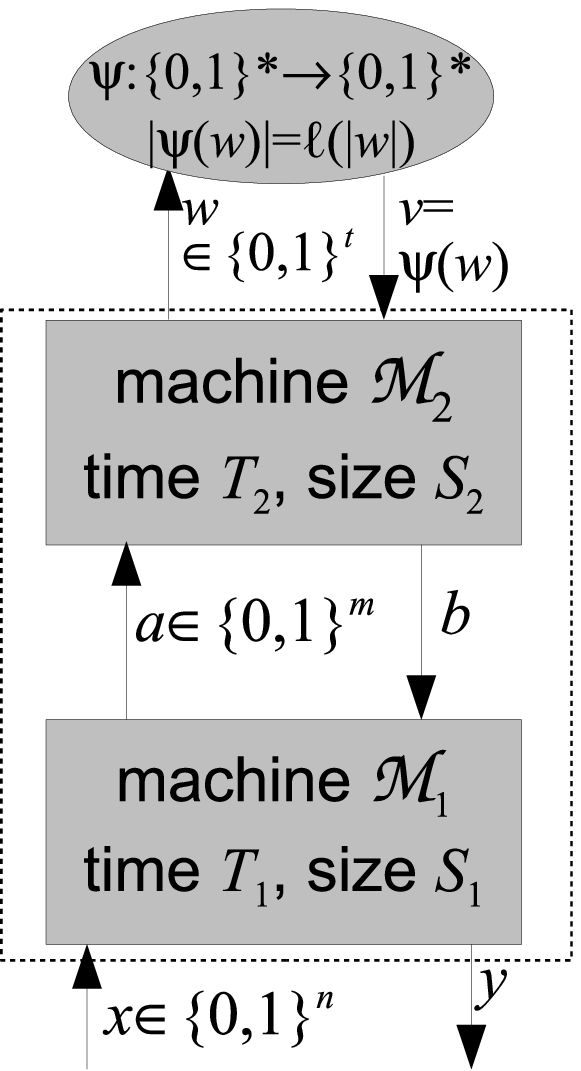}
\caption{\label{f:TTE2}a) Computation of $\calM^{\psi}$.
\quad b) Composition of two oracle machines.}
\end{center}
\end{figure}


\section{Parameterized Type-2 and Type-3 Complexity} \lab{s:Parameterized}
As illustrated in Example~\ref{x:Param1}a+b), 
real number computations sometimes may not admit running times
bounded in terms of the output precision only; 
cmp. also the case of inversion $(0;1]\ni x\mapsto 1/x$ 
\mycite{Exercise~7.2.10+Theorem~7.3.12}{Weihrauch}
and of polynomial root finding \cite{Hotz}.
Such effects are ubiquitous in numerics and captured
quantitatively for instance in so-called \emph{condition numbers} 
$k$ of matrices or, more generally, of
partial functions $f:\subseteq\IR^d\to\IR$ 
with singularities/diverging behaviour on
$\vec x\not\in\dom(f)$ \cite{Buergisser}:
in order to express and bound in terms of both $n$ and $k$
the number of iterations, i.e. basically 
the running time it takes in order to attain a prescribed (although,
pertaining to the \BCSS model with equality decidable,
usually relative) precision $2^{-n}$.
Put differently, $k$ serves in addition to $n$
as a second (but still first-order) parameter:
just like in classical complexity theory \cite{FlumGrohe}.
We combine both TTE complexity \mycite{Definition~2.1}{WeihrauchComplexity}
and the discrete notion of \emph{fpt--reduction}:

\begin{mydefinition} \lab{d:FPTTTE}
\begin{enumerate}
\item[a)]
Fix $F:\subseteq\{\sdzero,\sdone\}^\omega\to\{\sdzero,\sdone\}^\omega$ 
and $k:\dom(F)\to\IN$, called \emph{parameterization of $F$}. 
The pair $(F,k)$ is \emph{fixed-parameter tractable} if
there exists some recursive function $p:\IN\to\IN$ 
such that a Type-2 Machine can compute 
$\dom(F)\ni\bar\sigma\mapsto F(\bar\sigma)$ within
at most $n^{\calO(1)}\cdot p\big(k(\bar\sigma)\big)$ steps. 
\item[b)]
$(F,k)$ is \emph{fully polytime} computable
if the above running time is bounded by a polynomial
in both $n$ and $k(\bar\sigma)$, that is, if $p\in\IN[K]$.
\item[c)]
A \emph{parameterized representation} or
\emph{representation with parameter} of a space $X$
is a tuple $(\xi,k)$ where $\xi$ denotes a
representation of $X$ and $k:\dom(\xi)\to\IN$ some function.
\item[d)]
For a parameterized representation $(\xi,k)$ of $X$
and one $(\upsilon,\ell)$ of $Y$, let
$(\xi,k)\times(\upsilon,\ell):=(\xi\times\upsilon,k+\ell)$
denote the parameterized representation of $X\times Y$.
Here, $\xi\times\upsilon$ is the representation
according to Remark~\ref{r:TTE}c); and 
$k+\ell$ formally denotes the mapping
$(\sigma_0,\tau_0,\sigma_1,\tau_1,\ldots)\mapsto
k(\sigma_0,\sigma_1,\ldots)+\ell(\tau_0,\tau_1,\ldots)$.
\item[e)]
For a parameterized representation $(\xi,k)$ of $X$
and a representation $\upsilon$ of $Y$,
a (possibly partial and multivalued) function 
$f:\subseteq X\toto Y$ is \emph{fully polytime} 
$\big((\xi,k),\upsilon\big)$--computable
if it admits a $(\xi,\upsilon)$--realizer $F$
such that $(F,k)$ is \emph{fully polytime} computable.
\item[f)]
If in e) the representation $\upsilon$ for $Y$ is 
equipped with a parameter $\ell$ as well, call
$f:\subseteq X\toto Y$ a \emph{fixed-parameter reduction} 
if it admits a $(\xi,\upsilon)$--realizer $F$
such that $(F,k)$ is fixed-parameter tractable
and it holds $\ell\big(F(\bar\sigma)\big)\leq k(\bar\sigma)$.
\item[g)]
Again for parameterized representations $(\xi,k)$ of $X$
and $(\upsilon,\ell)$ of $Y$, 
a \emph{fully polytime} $\big((\xi,k),(\upsilon,\ell)\big)$--computable
$f:\subseteq X\toto Y$ must admit a $(\xi,\upsilon)$--realizer $F$
such that $(F,k)$ is \emph{fully polytime} computable
while increasing the parameter at most polynomially:
$\ell\circ F \leq p\circ k$ for some $p\in\IN[K]$.
\item[h)]
A parameterized representation $(\xi,k)$ of $X$
is \emph{fully polytime reducible} to 
another parameterized representation 
$(\upsilon,\ell)$ of $X$
(written $(\xi,k)\reducep(\upsilon,\ell)$)
if $\id:X\to X$ is fully polytime 
$\big((\xi,k),(\upsilon,\ell)\big)$--computable. 
\emph{Fully polytime equivalence} means that 
also the converse holds.
\end{enumerate}
\end{mydefinition}
Since the same point $x\in X$ may have several $\xi$--names 
(some perhaps more difficult to parse or process than others),
also the parameter may have different values for these names in e+f+g).
Note that both fixed-parameter reductions (Definition~\ref{d:FPTTTE}f)
and fully polytime computable functions (Definition~\ref{d:FPTTTE}g) 
are closed under composition. 

We did not define parameterized running times for 
second-order maps $\tilde F:\subseteq\Reg\to\Reg$:
because $\Reg$, as opposed to infinite binary strings,
already comes equipped with a notion of size as parameter
entering in running time bounds: recall Definition~\ref{d:Poly2}b).

\begin{myremark}
The above notions make also sense
for other complexity classes such as polynomial space. 
Giving up closure under composition,
Definitions~\ref{d:FPTTTE}a+b+e+f)
can be refined quantitatively to,
say, quadratic-time computability ---
but Definition~\ref{d:FPTTTE}g) becomes ambiguous:
just like in both the discrete case and 
\mycite{D\'{e}finition~2.1.7}{Lombardi};
cmp. \mycite{\S7}{WeihrauchComplexity}.
\end{myremark}
Also, as in unparameterized TTE complexity theory, 
the above notion of (parameterized) complexity may be 
meaningless for some representations --- that can be 
avoided by imposing additional (meta-) conditions 
\mycite{\S4+\S6}{WeihrauchComplexity}.
Example~\ref{x:Param1}b) is now rephrased as Item~a)
of the following

\begin{myexample} \lab{x:Param2}
\begin{enumerate}
\item[a)]
The exponential function on the entire real line is fully polytime 
$\big((\rhody,|\rhody|),\rhody\big)$--computable,
where $|\rhody|:=|\,\cdot\,|\circ\rhody$;
recall Definition~\ref{d:NumberComplex}. 
\item[b)]
A constant parameter has no effect asymptotically: For any fixed $c$,
polytime $(\xi,\upsilon)$--computability is equivalent to
fully polytime $\big((\xi,c),\upsilon\big)$--computability
and to fully polytime $\big((\xi,c),(\upsilon,\ell)\big)$--computability. \\
On the other hand, and as opposed to the discrete case,
not every (even total) computable
$F:\{\sdzero,\sdone\}^\omega\to\{\sdzero,\sdone\}^\omega$
admits some parameterization $k$ rendering
$(F,k)$ is fixed-parameter tractable.
\item[c)]
Suppose $\tilde F:\subseteq\Reg\to\Reg$ is second-order polytime computable
and has $\dom(\tilde F)$ consisting only of string functions of linear length
in the following sense: there exists $c\in\IN$
and $k:\dom(\tilde F)\to\IN$ such that every $\psi\in\dom(\tilde F)$ 
satisfies $\forall n:|\psi|(n)\leq c\cdot n+k(\psi)$.
Then $\tilde F$ is fully polytime computable. \\
The hypothesis is for instance satisfied for 
any $\big(\dyrhoLip,\tilde\upsilon\big)$--realizer $\tilde F$
of some $\Lambda:\subseteq\Lip[0;1]\toto Y$.
\item[d)]
Evaluation of a given power series 
$\big((a_j)_{_j},z\big)\mapsto \sum_{j=0}^\infty a_j z^j$
is not $\big((\rhody^2)^\omega,\rhody^2\big)$--computable,
even restricted to real arguments $|x|\leq1$ and real
coefficient sequences $(a_j)_{_j}=:\bar a$
of radius of convergence $R(\bar a):=1/\limsup_j|a_j|^{1/j}>1$. \\
However when given, in addition to approximations to
$\bar a$ and $z$, also integers $K,A$ with 
\begin{equation} \label{e:PowerSeries}
R\;>\;\sqrt[K]{2}\;=:\;r \qquad\text{and}\qquad 
\forall j:\;|a_j|\;\leq\;A/r^j \enspace , 
\end{equation}
evaluation up to error $2^{-n}$ becomes uniformly computable 
within time polynomial in $n$, $K$, and $\log A$. \\ 
Formally let $\pi$ denote the following representation of 
$\Comega_1:=\big\{\bar a\subseteq\IC, R(\bar a)>1\big\}$:
A $\pi$--name of $\bar a$ is a
$\big((\rhody^2)^\omega\times\binary\times\unary\big)$--name
(recall Definition~\ref{d:NumberComplex}e+Remark~\ref{r:TTE}c+d)
of $(\bar a,A,K)$ satisfying Equation~(\ref{e:PowerSeries}).
Equipping $\pi$ with parameterization $K+\log A$
renders evaluation
$(\Comega_1,[-1;1])\ni(\bar a,x\big)\mapsto\sum_j a_jx^j$
fully polytime $\big((\pi,K+\log A)\times\rhody,\rhody)$--computable.
\end{enumerate}
\end{myexample}
Note that the (questionable) fully polytime computability of $\exp(x)$ in a) 
hinges on using as parameter the value rather than, perhaps more naturally, 
the binary length of (the integral part of) the argument $x$.
Indeed, similarly to the discrete function $\IN\ni x\mapsto 2^x$,
an output having length exponential in that of the input 
otherwise prohibits polytime computability. A notion of parameterized
complexity taking into account the output size is suggested in
Definition~\ref{d:dualuse}j) below.

\begin{proof}[Example~\ref{x:Param2}]
\begin{enumerate}
\item[a)] immediate.
\item[b)] A polynomial evaluated on a constant argument is again a constant. \\
On the other hand the Time Hierarchy Theorem
yields a binary sequence $\IN\ni n\mapsto\tau_n\in\{\sdzero,\sdone\}$ 
computable but not within time polynomial in $n$; now consider the 
constant function $F:\{\sdzero,\sdone\}^\omega\ni\bar\sigma\mapsto\bar\tau$.
\item[c)]
According to Observation~\ref{o:SecondVsFirst}a+b+c),
second-order polynomials $P(n,\ell)$ 
on linearly bounded second-order arguments $\ell(n)\leq c\cdot n+k$
with fixed $c$ can be bounded by an ordinary polynomial in $n+k$. \\
Moreover recall (Example~\ref{x:Representation2}b) that
$\big(\dyrhoLip\big)$--names have linear length.
\item[d)]
The negative claim is folklore; cmp. e.g. \cite{Mueller95}.
We defer the proof of the positive claim to
Theorem~\ref{t:PowerSeries}a).
\qed\end{enumerate}\end{proof}
Note that, for a parameterized representation $(\xi,k)$ of $X$
according to Definition~\ref{d:FPTTTE}d),
a Type-2 Machine $\big((\xi,k),\upsilon\big)$--computing some 
$f:X\ni x\mapsto f(x)\in Y$ is provided merely with a $\xi$--name
$\bar\sigma$ of $x$ but not with the value $k(\bar\sigma)$ of the
parameter entering in the running time bound it is to obey.
In the case of the global exponential function (Example~\ref{x:Param2}a),
an upper bound to this value is readily available as part of the
given $\rhody$--name of $x$. In the case of power series evaluation
(Example~\ref{x:Param2}d), on the other hand, the values
of parameters $K$ and $A$ had to be explicitly provided
by means of the newly designed representation $\pi$,
that is by `enriching' \cite[p.238/239]{KreiselMacIntyre}
$(\rhody^2)^\omega$, in order
to render an otherwise discontinuous operation computable;
recall also Example~\ref{x:Param1}.
Such simultaneous use of integers as both complexity parameters
and discrete advice will arise frequently in the sequel
and is worth a generic

\begin{mydefinition} \lab{d:dualuse}
\begin{enumerate}
\item[a)]
Let $\xi$ denote an ordinary representation of $X$
and $L:X\toto\IN$ some total multivalued function.
Then ``\emph{$\xi$ with advice parameter $L$ in unary}''
means the following parameterized representation of $X$,
denoted as $\xi+\unary(L)=:(\upsilon,k)$: 
an $\upsilon$--name of $x\in X$ is an infinite binary string 
$\bar\sigma=\sdone^\ell\,\sdzero\,\bar\tau$
where $\bar\tau$ is a $\xi$--name of $X$ and $\ell\in L(x)$;
and $k(\bar\sigma):=\ell$.
\item[b)]
Let $\xi$ denote an ordinary representation of $X$
and $L:X\toto\IN$ some total multivalued function.
Then ``\emph{$\xi$ with advice parameter $L$ in \emph{bi}nary}''
means the following parameterized representation of $X$,
denoted as $\xi+\binary(L)=:(\upsilon,k)$: 
an $\upsilon$--name of $x\in X$ is an infinite binary string 
$\bar\sigma=\langle\bin(\ell),\bar\tau\rangle$
where $\bar\tau$ is a $\xi$--name of $X$ and $\ell\in L(x)$;
and $k(\bar\sigma):=\log\ell$.
Here, $\langle\,\cdot\,,\,\cdot\,\rangle:
\{\sdzero,\sdone\}^*\times\{\sdzero,\sdone\}^\omega\to\{\sdzero,\sdone\}^\omega$
denotes some fixed injective linear-time bi-computable mapping.
\item[c)]
Let $\tilde\xi:\subseteq\Reg\to X$ denote a second-order representation
and $L:X\toto\IN$ some total multivalued function.
Then ``\emph{$\tilde\xi$ with advice parameter $L$ in unary}''
means the following second-order representation,
denoted as $\tilde\xi+\unary(L)$: 
a name of $x\in X$ is a mapping
$\{\sdzero,\sdone\}^*\ni\vec w\mapsto\sdone^\ell\,\sdzero\,\psi(\vec w)\in\{\sdzero,\sdone\}^*$
where $\psi$ is a $\tilde\xi$--name of $x$ and $\ell\in L\big(\tilde\xi(\psi)\big)$.
\item[d)]
Let $\tilde\xi:\subseteq\Reg\to X$ denote a second-order representation
and $L:X\toto\IN$. Then ``\emph{$\tilde\xi$ with advice parameter $L$ in \emph{bi}nary}''
means the following second-order representation, denoted as $\tilde\xi+\binary(L)$: 
a name of $x\in X$ is a mapping
$\{\sdzero,\sdone\}^*\ni\vec w\mapsto\langle\bin(\ell),\psi(\vec w)\rangle\in\{\sdzero,\sdone\}^*$
where $\psi$ is a $\tilde\xi$--name of $x$ and $\ell\in L\big(\tilde\xi(\psi)\big)$.
\item[e)]
For a function
$\tilde F:\subseteq\{\sdzero,\sdone\}^\omega\times\Reg\to\{\sdzero,\sdone\}^\omega$ 
with \emph{parameterization} $k:\dom(\tilde F)\to\IN$,
the pair $(\tilde F,k)$ is \emph{fully polytime} if some 
oracle machine $\calM^?$ can compute $\tilde F$ 
according to Definition~\ref{d:Poly2}b)
within time a second-order polynomial in $n+k$ and $|\psi|$; 
\\
similarly for functions
$\tilde G:\subseteq\{\sdzero,\sdone\}^\omega\times\Reg\to\Reg$
with parameterization $k:\dom(\tilde G)\to\IN$. 
\item[f)]
For a parameterized representation $(\xi,k)$ of $X$
and second-order representation $\tilde\upsilon$ of $Y$
and ordinary representation $\zeta$ of $Z$,
call $f:\subseteq X\times Y\toto Z$ 
\emph{fully polytime} 
$\big((\xi,k),\tilde\upsilon,\zeta\big)$--computable
if it admits a $(\xi,\tilde\upsilon,\zeta)$--realizer $\tilde F$
such that $(\tilde F,k)$ is fully polytime in the sense of e). 

If $(\zeta,\ell)$ is a parameterized representation of $Z$,
call $f$ \emph{fully polytime} 
$\big((\xi,k),\tilde\upsilon,(\zeta,\ell)\big)$--computable
if in addition $\ell$ is bounded by a second-order polynomial
$P$ in $k$ and $|\psi|$:
$\ell\big(\tilde F(\bar\sigma,\psi)\big)
\leq P\big(k(\bar\sigma),|\psi|\big)$.

If $\tilde Z$ is a second-order representation of $Z$,
call $f$ \emph{fully polytime} 
$\big((\xi,k),\tilde\upsilon,\tilde\zeta\big)$--computable
if it admits a $(\xi,\tilde\upsilon,\tilde\zeta)$--realizer $\tilde G$
such that $(\tilde G,k)$ is fully polytime in the sense of e). 
\item[g)]
Fully polytime reduction of a parameterized representation
$(\xi,k)$ of $X$ to a second-order representation $\tilde\zeta$
of $X$ is written as $(\xi,k)\reducep\tilde\zeta$ and
means fully polytime $\big((\xi,k),\tilde\zeta\big)$--computability
of $id:X\to X$; 
similarly for $\tilde\zeta\reduceP(\xi,k)$.
\item[h)]
$(\tilde F,k)$ and $(\tilde G,k)$ as in e)
are \emph{fixed-parameter tractable}
if the computation time is
$\leq P(n,|\psi|)\cdot p(k)$
for a second-order polynomial $P$ and some
arbitrary function $p:\IN\to\IN$.

In the setting of f), 
call $f:\subseteq X\times Y\toto Z$ 
\emph{fixed-parameter} 
$\big((\xi,k),\tilde\upsilon,\zeta\big)$--computable
if it admits a $(\xi,\tilde\upsilon,\zeta)$--realizer $\tilde F$
such that $(\tilde F,k)$ is fixed-parameter tractable; 

and \emph{fixed-parameter}
$\big((\xi,k),\tilde\upsilon,\tilde\zeta\big)$--computable
if it admits a $(\xi,\tilde\upsilon,\tilde\zeta)$--realizer $\tilde G$
such that $(\tilde G,k)$ is fixed-parameter tractable.
\item[j)]
For a parameterized representation $(\xi,k)$ of $X$
and second-order representation $\tilde\upsilon$ of $Y$
and a parameterized representation $(\zeta,\ell)$ of $Z$,
call $f:\subseteq X\times Y\toto Z$
\emph{output-sensitive polytime}
$\big((xi,k),\tilde\upsilon,(\zeta,\ell)\big)$--computable
if it admits a 
$\big((xi,k),\tilde\upsilon,(\zeta,\ell)\big)$--realizer
computable within time a second-order polynomial in $n+k+\ell$ 
and the length of the given $\tilde\upsilon$--name. 

For second-order representation $\tilde\zeta$ of $Z$,
call $f:\subseteq X\times Y\toto Z$
\emph{output-sensitive polytime}
$\big((xi,k),\tilde\upsilon,\tilde\zeta\big)$--computable
if it admits a 
$\big((xi,k),\tilde\upsilon,\tilde\zeta\big)$--realizer
computable within time a second-order polynomial in $n+k+\ell$ 
and the lengths of the given $\tilde\upsilon$--names
and of the produced $\tilde\zeta$--names;
recall Observation~\ref{o:SecondVsFirst}a).
\end{enumerate}
\end{mydefinition}
A more relaxed notion of second-order fixed-parameter tractability (Item~h)
might allow for running times polynomial in $n$ multiplied with
some arbitrary second-order function of both $k$ and $|\psi|$.
Output-sensitive running times (Item~j) are common, e.g., in Computational Geometry. 
As usual, careless choices of the output parameter $\ell$ or output
representation $\tilde\zeta$ may lead to useless notions of
output-sensitive polytime computations.
\\
The representation of $\Lip[0;1]$ from Example~\ref{x:Representation2}b) 
is an instance of Definition~\ref{d:dualuse}d).
Further applications will appear in Definition~\ref{d:PowerSeries} below
to succinctly rephrase the parameterized representation of
$\Comega_1$ from Example~\ref{x:Param2}d).
Items~e)+f) extend Definition~\ref{d:FPTTTE}e+g+h).

\begin{mylemma} \lab{l:dualuse}
\begin{enumerate}
\item[a)]
Fixing $d\in\IN$ and first-order representations 
$\xi,\upsilon$ of $X,Y$, a total
$f:X\toto Y$ is $(xi,\upsilon)$--computable
\emph{with $d$--wise advice} in the sense of 
\mycite{Definition~8}{Advice}
~iff~ there exists some $L:X\toto\{1,\ldots,d\}$
such that $f$ is $(\xi+\unary(L),\upsilon)$--computable.
\item[b)]
$\xi+\unary(L)$ and $\xi+\binary(2^L)$ are 
fully polytime equivalent;  \\
$\tilde\xi+\unary(L)$ and $\tilde\xi+\binary(2^L)$
are second-order polytime equivalent.
\item[c)]
Extending Observation~\ref{o:SecondVsFirst}d+e), 
let $\xi$ denote a representation of $X$ 
with induced second-order representation $\tilde\xi$
and fix $K:X\toto\IN$.
Then it holds $\xi+\binary(K)\reducep\tilde\xi+\binary(K)$
and $\tilde\xi+\binary(K)\reduceP\xi+\binary(K)$.
\end{enumerate}
\end{mylemma}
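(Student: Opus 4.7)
The plan is to prove the three parts separately by unwinding the definitions and exhibiting explicit realizers, tracking the parameter throughout. None of the steps requires new ideas beyond those already in Observation~\ref{o:SecondVsFirst}; the content of the lemma is that the advice parameterization interacts well with those earlier reductions.

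For (a), I first unpack the notion of $d$-wise advice of \mycite{Definition~8}{Advice}: it provides a finite list $M_1,\ldots,M_d$ of Type-2 machines such that, for every $x\in X$ and every $\xi$-name $\bar\sigma$ of $x$, at least one $M_i$ on input $\bar\sigma$ produces an $\upsilon$-name of some $y\in f(x)$. Given such a list, set $L(x):=\{i:M_i \text{ succeeds on some } \xi\text{-name of }x\}\subseteq\{1,\ldots,d\}$; a $\big(\xi+\unary(L),\upsilon\big)$-realizer reads the unary header $\sdone^\ell\sdzero$, recovers $\ell\in\{1,\ldots,d\}$, and simulates $M_\ell$ on the suffix. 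Conversely, from a machine $N$ that $\big(\xi+\unary(L),\upsilon\big)$-computes $f$, obtain $d$-wise advice by defining $M_i$ to simulate $N$ on the virtual prefix $\sdone^i\sdzero$ concatenated with the actual $\xi$-name; then $M_\ell$ succeeds whenever $\ell\in L(x)$.

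For (b), the parameter values already coincide on the two sides: a $\xi+\unary(L)$-name carries parameter $k=\ell\in L(x)$, while a $\xi+\binary(2^L)$-name carries parameter $k=\log 2^\ell=\ell$. The conversion from unary to binary reads $\sdone^\ell\sdzero$, writes $\bin(2^\ell)$ (which has length exactly $\ell+1$) and re-pairs with the $\xi$-name through the linear-time bicomputable pairing; the reverse conversion decodes the pairing, extracts $\bin(2^\ell)$, recovers $\ell$ from its length, and emits the unary header. Both directions run in time linear in $n+\ell$, hence fully polytime. The second-order analogue applies exactly the same transformation inside each query $\vec w\mapsto\cdot$, and length-monotonicity is preserved because both encodings have the same length profile.

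For (c), I will invoke the polytime reductions of Observation~\ref{o:SecondVsFirst}d) between $\xi$ and $\tilde\xi$ and thread the binary advice uniformly through. For $\xi+\binary(K)\reducep\tilde\xi+\binary(K)$, given $\bar\sigma=\langle\bin(\ell),\bar\tau\rangle$, produce the length-monotone string function $\vec v\mapsto\langle\bin(\ell),\tau_{|\vec v|}\rangle$ in polynomial time; the advice $\bin(\ell)$ is copied verbatim, so the parameter $\log\ell$ is preserved. For $\tilde\xi+\binary(K)\reduceP\xi+\binary(K)$, recall that induced $\tilde\xi$-names $\chi$ satisfy $|\chi|\equiv 1$, so any $\tilde\xi+\binary(K)$-name $\psi$ has $|\psi|(n)\leq\log\ell+\calO(1)$ uniformly in $n$; query $\psi$ once to extract $\bin(\ell)$, then for each position $n$ query $\psi$ on any string of length $n$ and decode the single bit $\chi$-component from the pairing, emitting the result interleaved with $\bin(\ell)$ to obtain the $\xi+\binary(K)$-name. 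This runs in second-order polytime in $n$ and $|\psi|$, with output parameter $\log\ell$. The main bookkeeping obstacle is verifying that length-monotonicity (Equation~(\ref{e:Regular})) is preserved by the conversions in (b) and (c); once one observes that the pairing $\langle\cdot,\cdot\rangle$ is length-monotone and linear-time, and that the unary and binary encodings of $\ell$ differ in length only by a constant, the running time bounds follow from the closure properties~(\ref{e:Composition2}) of second-order polynomials.
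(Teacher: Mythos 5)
Your parts b) and c) are correct and follow essentially the same route as the paper: for b) one decodes the unary header $\sdone^\ell\,\sdzero$ and recodes it as $\langle\bin(2^\ell),\,\cdot\,\rangle$ (and back) within time $\calO(n+\ell)$, performing the same recoding inside each oracle answer in the second-order case; for c) one passes between $\langle\bin(k),\bar\sigma\rangle$ and the induced name $\vec v\mapsto\langle\bin(k),\sigma_{|\vec v|}\rangle$, whose length $\approx 1+\log k$ is independent of $n$, within time polynomial in $n+\log k$. This is exactly the paper's argument, including your remarks on preservation of the parameter and of length-monotonicity.

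Part a), which the paper treats as immediate, is where your write-up has a genuine gap. The property you extract from \mycite{Definition~8}{Advice} --- that for every $x$ and every $\xi$--name $\bar\sigma$ of $x$ at least one of the machines $M_1,\ldots,M_d$ succeeds on $\bar\sigma$ --- is strictly weaker than the actual notion: there the advice value depends on the point $x$ alone, i.e.\ $X$ is covered by pieces $\Delta_1,\ldots,\Delta_d$ on each of which (a restriction of) $f$ is computable, so the machine $M_i$ for the piece $\Delta_i$ succeeds on \emph{every} $\xi$--name of every $x\in\Delta_i$. Under your weaker reading, your choice $L(x):=\{i: M_i$ succeeds on \emph{some} $\xi$--name of $x\}$ does not support the realizer you build: a header $\ell\in L(x)$ only certifies that $M_\ell$ works on some name of $x$, whereas your realizer runs $M_\ell$ on the particular suffix it is handed, on which $M_\ell$ may well output a name of a wrong value (and redefining $L(x)$ as the set of indices succeeding on \emph{all} names could leave it empty under your hypothesis, so $L$ need not be total). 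The repair is to use the covering formulation directly: put $L(x):=\{i: x\in\Delta_i\}$, so that the header guarantees correctness of $M_\ell$ on every $\xi$--name of $x$. Your converse direction --- setting $\Delta_i:=\{x: i\in L(x)\}$ and letting $M_i$ prepend the virtual prefix $\sdone^i\,\sdzero$ to the given name before simulating $N$ --- is fine as written.
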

\begin{proof}
\begin{enumerate}
\item[a)] immediate. 
\item[b)]
It is easy to decode a given $\big((\xi,K)+\unary(L)\big)$--name
$\bar\sigma=\sdone^\ell\,\sdzero\,\bar\tau$ into $\ell$ and $\bar\tau$
and to recode it into $\langle\bin(2^\ell),\bar\tau\rangle$
as well as back, both within time $\calO(n+\ell)$;
similarly for computing
$\{\sdzero,\sdone\}^*\ni\vec w\mapsto\langle\bin(2^\ell),\psi(\vec w)\rangle\in\{\sdzero,\sdone\}^*$
by querying 
$\{\sdzero,\sdone\}^*\ni\vec w\mapsto\sdone^\ell,\,\sdzero\,\psi(\vec w)\rangle\in\{\sdzero,\sdone\}^*$.
and vice versa.
\item[c)]
Recall that a $\big(\xi+\binary(K)\big)$--name of $x$
is an infinite string of the form
$\langle\bin(k),\bar\sigma\rangle$ with $\xi(\bar\sigma)=x$ and $k\in K(x)$.
It corresponds to a 
$\big(\tilde\xi+\binary(K)\big)$--name 
$\psi:\sdone^n\mapsto\langle\bin(k),\sigma_{n'}\rangle$
where $n\approx {n'}+\log k$ and $|\psi|\approx1+\log k$ has length independent of $n$.
This leads to conversion in both directions,
computable within time polynomial in $n+\log k$.
\qed\end{enumerate}\end{proof}
\COMMENTED{
Fix ordinary parameterized representation
$(\xi,K)$ of $X$ and second-order representation
$\tilde\xi$ of $X$ as well as $L,M:X\toto\IN$.
\\
$\big((\xi,K)+\binary(L)\big)+\binary(M)$ and
$\big((\xi,K)+\binary(M)\big)+\binary(L)$ and
$\big((\xi,K)+\binary(\langle M,L\rangle\binary)$
are fully polytime equivalent;  \\
$(\tilde\xi+\binary(L))+\binary(M)$ and
$(\tilde\xi+\binary(M))+\binary(L)$ and
$(\tilde\xi+\binary(\langle M,L\rangle\binary)$
are second-order polytime equivalent.
\begin{proof}
We mutually decode and encode
$\big\langle\bin(m),\langle\bin(\ell),\bar\tau\rangle\big\rangle$
and
$\big\langle\bin(\ell),\langle\bin(m),\bar\tau\rangle\big\rangle$
and
$\big\langle\bin(\langle m,\ell\rangle),\bar\tau\big\rangle$
for any $(\xi,K)$--name $\bar\tau$ and any $m,\ell$;
which is easy in time $\Theta\big(n+\log(m)+\log(\ell)\big)
=\Theta\big(\log\langle m,\ell\rangle\big)$ independent
of the value or encoding of $K$ into $\bar\tau$. \\
The second-order case proceeds similarly.
\qed\end{proof}} 
Without discrete advice, maximization remains computable
but not within second-order polytime even on analytic functions:

\begin{myexample} \lab{x:Param3}
\begin{enumerate}
\item[a)]
Evaluation (i) on $\Lip[0;1]$, that is
the mapping $(f,x)\mapsto f(x)$, is uniformly polytime
$\big(\dyrhoLip,\rhody,\rhody\big)$--computable; 
\\
addition (ii), and multiplication (iii)  
on $\Lip[0;1]$ are uniformly polytime 
$\big(\dyrhoLip,\dyrhoLip\big)$--computable
within second-order polytime
\item[b)]
and so is composition (vii) when defined, that is, the partial operator
\[ \Lip[0;1]\times \Lip\big([0;1],[0;1]\big)\;\ni\; (g,f)\;\mapsto\; g\circ f\;\in\; \Lip[0;1] \enspace .\]
\item[c)]
Differentiation (iv) is 
$([\rhody\myto\rhody],[\rhody\myto\rhody])$--discontinuous 
(and hence \linebreak \mbox{$\big(\dyrhoLip,\dyrhoLip\big)$--uncomputable})
even restricted to $\Cinfty[0;1]$. 
\item[d)]
On the other hand differentiation does become computable when given, 
in addition to approximations to $f\in C^2[0;1]$,
an (integer) upper bound on $\|f''\|_\infty$. \\
More precisely, $\partial:C^2[0;1]\to C^1[0;1]$ 
is polytime $\big((\dyrhoLip)\mathbf{'},\dyrhoLip\big)$--computable,
where a $\big(\dyrhoLip\big)'$--name of $f\in C^2[0;1]$ is defined to be
a $\big(\dyrhoLip\big)$--name of $f'\in C^1[0;1]$.
\item[e)]
Parametric maximization (vi), namely the operator
\begin{equation} \label{e:Max} 
\MAX:\Lip[0;1]\times[0;1]^2
\;\ni\; (f,u,v)\;\mapsto\; 
\max\big\{f(x):\min(u,v)\leq x\leq\max(u,v)\big\} , \enspace
\end{equation}
is $\big(\dyrhoLip,\rhody^2,\rhody\big)$--computable,
\item[f)]
but not within subexponential time, even restricted to analytic real 1-Lipschitz functions
$f:[0;1]\to[0;1]$.
\item[g)]
Similarly for parametric integration (v), that is the operator
\begin{equation} \label{e:Int}
\int:\Comegax{\cball(0,1)}\times[0;1]^2
\;\ni\; (f,u,v)\;\mapsto\; \int\nolimits_{\min(u,v)}^{\max(u,v)} f(x)\,dx\;\in\;\IR \enspace . 
\end{equation}
\item[h)]
It holds $\tilderhorho\big|^{\Lip[0;1]}\not\reduce\dyrhoLip$.
\end{enumerate}
\end{myexample}
\begin{figure}[htb]
\includegraphics[width=0.48\textwidth]{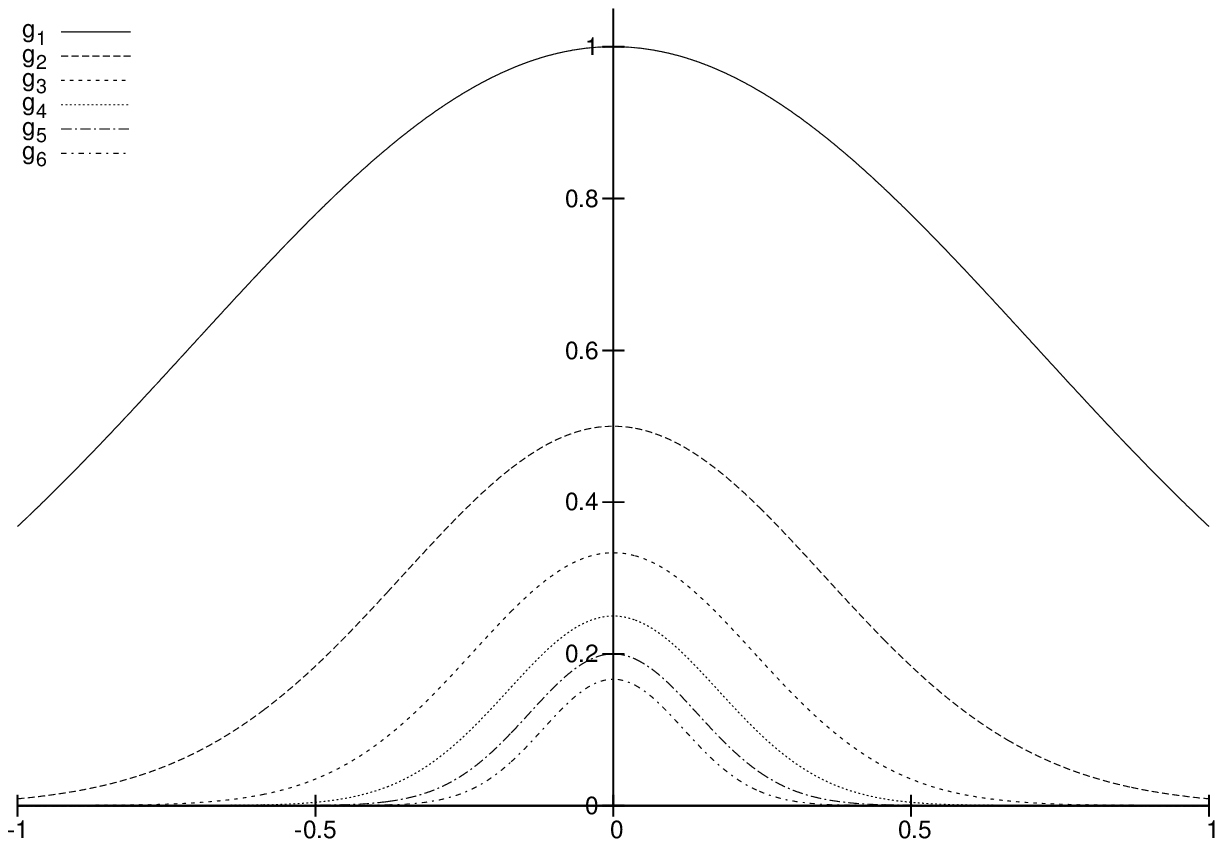}\hfill
\includegraphics[width=0.48\textwidth]{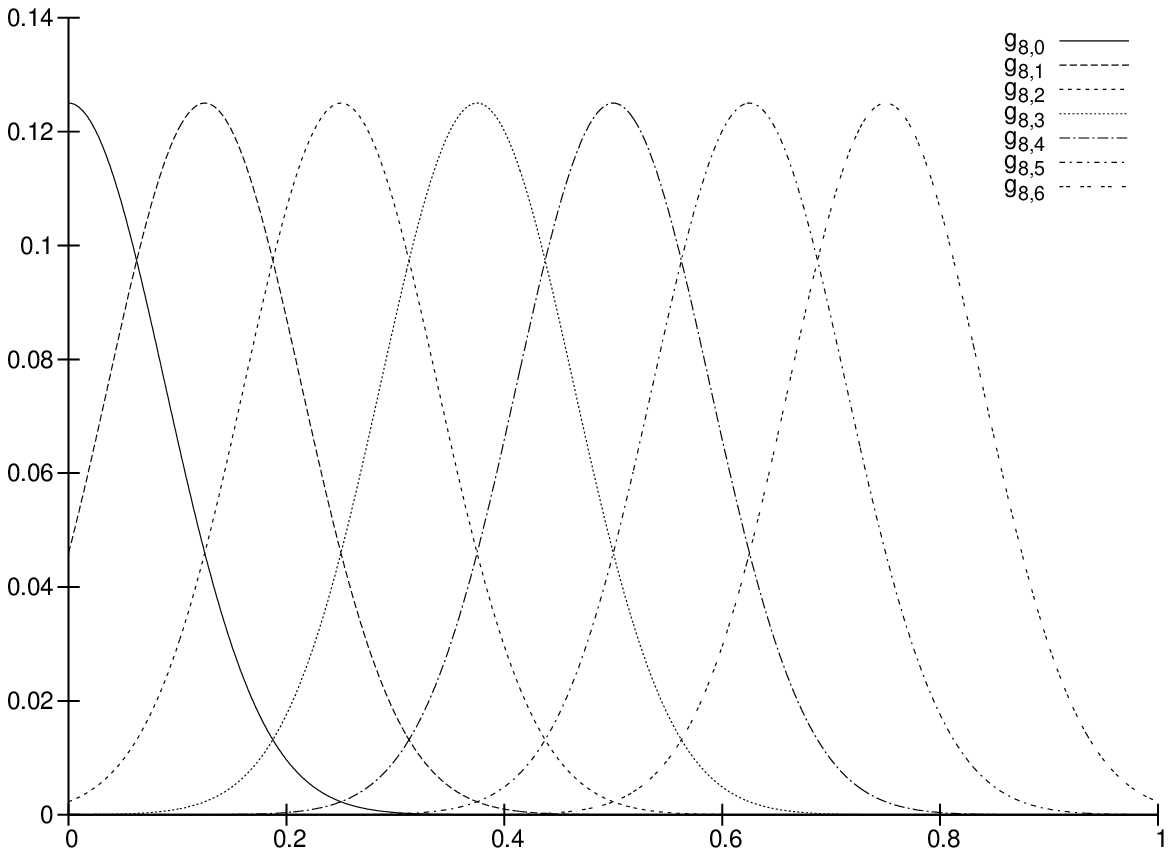}
\caption{\label{f:Gauss}a) Gaussian functions $g_K$ and b) their shifts
$g_{K,k}$ as employed in the proof of Example~\ref{x:Param3}f).}
\end{figure}
\begin{proof}
Concerning a\,i) and b) recall 
Example~\ref{x:Representation2}e+f);
a\,ii) and a\,iii) follow immediately from the uniform pointwise
polytime computability of addition and multiplication on single reals
(Example~\ref{x:Param1}b), taking into account that 
the parameterized time-dependence of the latter for large arguments
is covered in second-order polytime by the length of a name of $f$.
The proof of \mycite{Theorem~6.4.3}{Weihrauch} 
also establishes c); for d) refer
for instance to the proof of \mycite{Theorem~6.2}{Ko91}
or \mycite{Theorem~6.4.7}{Weihrauch}.
Claim~e) and the first part of g) is established,
e.g., in \mycite{Corollary~6.2.5+Theorem~6.4.1}{Weihrauch}.
\\
For f) 
consider the family of analytic Gaussian functions 
\[ g_1(x):=\exp(-x^2), \qquad g_K(x):=g_1(K\cdot x)/K=\exp(-K^2x^2)/K, \quad K\in\IN  \]
depicted in Figure~\ref{f:Gauss}a).
For $K\gg 1$ these are `high' and `thin'
but not too `steep' in the sense that
$\forall x:|g'_K(x)|\leq1$, that is,
1-Lipschitz and thus admit a linear-size $\dyrho$--name $\psi$;
similarly for their shifts $g_{K,k}:[0;1]\ni x\mapsto g_{K}(x-k/K)$, $0\leq k<K$;
cf. Figure~\ref{f:Gauss}b).
Now any algorithm computing $f\mapsto\Max\big(f\big)(1)=\max\{f(x)\mid0\leq x\leq 1\}$ 
on $\{0,g_{K,0},\ldots,g_{K,K-1}\}\subseteq\Comega[0;1]\cap\Lip_1[0;1]$ 
up to error $2^{-n}=:1/(2K)$
must distinguish (every name of) the identically zero function from 
(all names of) some of the $g_{K,k}$ ($0\leq k<K$)
because the first has $\max(0)=0$ and the others $\max(g_{K,k})=1/K$.
Yet, since the $g_{K}$ are `thin', any evaluation
up to error $2^{-m}$ at some $x$ with $|x-k/K|\geq m/K$ 
(i.e. a query to the given name $\psi$) 
may return $0$ as approximation to $g_{K,k}(x)$.
For a sequence $(x_j,2^{-m_j})$ of queries that unambiguously
distinguishes the zero function from the $g_{K,k}$, the intervals
$\big[x_j-\tfrac{m_j}{K};x_j+\tfrac{m_j}{K}\big]$ therefore must necessarily
cover $[0;1]$ and in particular satisfy $\sum_j m_j\geq K/2=2^{n-2}$.
On the other hand each such query takes $\Omega(m_j)$ steps.
\\
For the second part of g) similarly observe
$\int 0=0$ and $\int_0^1 g_{K,k}\geq 
\int_0^{1/K} g_K=
\int_0^1 g_1/K^2\geq 1/(2K^2)=:2^{-n}$.
\\
Turning to h), and on a more refined level, 
consider a hypothetical oracle machine $\calM^{\psi}$
converting a $\tilderhorho$--name $\psi$ of $f(x)\equiv0$ into 
a Lipschitz constant $\ell$ to $f$, necessarily so within finite time
and when knowing finitely many values of $f|_{\ID}$ and 
of a (w.l.o.g. non-decreasing) modulus $\mu$ of continuity to $f$. 
Let $n$ be so large
that no $\mu(n')$ with $n'\geq n$ has thus been queried 
nor the values of $f$ on any pair of arguments closer than $2^{-n}$.
It is no loss of generality to suppose $\ell\geq\mu(n)$.
Then it is easy (but tedious) to add to $f$ a scaled and 
shifted Gaussian function 
$2\ell\cdot g_{2^n,k}$ for some (not necessarily integral) $k$ 
such that the resulting $\tilde f$ 
coincides with $f$ on the arguments queried and has a modulus 
of continuity $\tilde\mu$ coinciding with $\mu$ on $\{1,\ldots,n\}$
and is still $2\ell$--Lipschitz but not $\ell$-Lipschitz.
\qed\end{proof}
Note the similarity of our lower bound proof of
Example~\ref{x:Param3}f+g+h) to arguments in 
\emph{information-based complexity} \cite{TraubIBC,HertlingIBC}
generally pertaining to the \BSS model.

\COMMENTED{
$([\rhody\myto\rhody],[\rhody\myto\rhody])$--discontinuity 
and, in view of Example~\ref{x:Representation2}e), also
$(\tilderhorho,\tilderhorho)$--computability,
is well-known; see for instance the proof of \mycite{Theorem~6.4.3}{Weihrauch}
or \mycite{Theorem~I.1.3}{PER}. \\
Concerning $\big(\dyrhoLip\big)'$,
names provide a Lipschitz constant $\ell'$ to $f'$;
which also constitutes an upper bound to $\|f'\|_\infty$
because of $|f'(x)-f'(0)|\leq\ell'\cdot x$.
Now the proof of \cite[\textsc{Lemma~3.4}b]{Mueller86}
shows differentiation to be uniformly computable in
time polynomial in $n$ and the logarithm of a given (!)
Lipschitz constant $\ell'$ to $f'$.
Finally observe,
recalling the proof of Example~\ref{x:Representation2}a),
that $\dyrho'$--names have length roughly 
$n+\log\|f\|_\infty+\log\ell'$.
}
\begin{myremark} \lab{r:Advantages}
While aware of the conceptual and notational barriers
to these new notions, we emphasize their benefits:
\begin{itemize}
\item They capture numerical practice with various generalized condition numbers as parameters
\item based on, and generalizing, TTE to provide a formal foundation to uniform computation
\item on spaces of `points' as well as of (continuous) functions 
\item by extending discrete parameterized complexity theory
\item with runtime bounds finer than the global worst-case ones
\item while maintaining closure under composition
\item and thus the modular approach to software development by combining subroutines.
\end{itemize}
\end{myremark}
In the sequel we shall apply these concepts to present and 
analyze uniform algorithms receiving analytic functions as inputs.

\section{Uniform Complexity of Operators on Analytic Functions}
For $z\in\IC$ and $r>0$, abbreviate $\ball(z,r):=\{w\in\IC:|w-z|<r\}$
and $\cball(z,r):=\{w\in\IC:|w-z|\leq r\}$.
For $U\subseteq\IC$ a non-empty open set of complex numbers,
let $\Comegax{U}$ denote the class of functions $g:U\to\IC$ 
complex differentiable in the sense of Cauchy-Riemann;
for a closed $A\subseteq\IC$, define $\Comegax{A}$ to consist
of precisely those functions $g:\subseteq\IC\to\IC$ 
with open $\dom(g)\supseteq A$. 

A real function $f:[0;1]\to\IR$ thus belongs to $\Comegax{[0;1]}$
if it is the
restriction of a complex function $g$ differentiable on some 
open complex neighbourhood $U$ of $[0;1]$; cmp. \cite{RealAnalytic}.
By Cauchy's Theorem, each such $g$ can be represented locally around
$z_0\in U$ by some power series $f_{\bar a}(z-z_0):=\sum_{j=0}^\infty a_j (z-z_0)^j$.
More precisely, Cauchy's Differentiation Formula yields
\begin{equation} \label{e:Cauchy}
a_j \;=\; f^{(j)}(z_0)/j! \; =\; 
\frac{1}{2\pi i} \int\nolimits_{|z-z_0|=r} \frac{f(z)}{(z-z_0)^{j+1}}\,dz, 
\qquad \cball(z_0,r)\subseteq U
\end{equation}
Now for a fixed power series with polytime computable
coefficient sequence $\bar a=(a_j)_{_j}$, its anti-derivative and 
ODE solution and even maximum\footnote{Note that
anti-/derivative and ODE solution of an analytic function
is again analytic but parametric maximization in general is not.}
are polytime computable;
see Theorem~\ref{t:PowerSeries} below.
And since $[0;1]$ is compact, finitely many such 
power series expansions with rational centers $z_0$
suffice to describe $f$ ---
and yield the drastic improvements to Fact~\ref{f:Nonunif}
mentioned in Example~\ref{x:Upper}.

On the other hand we have already pointed out there
many deficiencies of nonuniform complexity upper bounds.
For example the mere evaluation of a power series 
requires, in addition to
the coefficient sequence $\bar a$, further information;
recall Example~\ref{x:Param2}d) and 
see also \mycite{Theorem~6.2}{ArithHierarchy}.

The present section presents, and analyzes the parameterized running times of, 
uniform algorithm for primitive operations on analytic functions.
It begins with single power series, w.l.o.g. around 0 with radius
of convergence $R>1$; then proceeds to globally convergent power series
such as the exponential function; and finally to real functions analytic
on $[0;1]$.

\medskip
Uniform algorithms and parameterized upper running time bounds 
for evaluation have been obtained for instance as \mycite{Theorem~28}{YapHypergeom}
on a subclass of power series, namely the hypergeometric ones
whose coefficient sequences obey an explicit recurrence relation
and thus can be described by finitely many real parameters.
Further complexity considerations, and in particular lower bounds,
are described in \cite{RettingerHabil}, \cite{Rettinger08,Rettinger09}.
There is a vast literature on computability in complex analysis. 
For practicality issues refer, e.g., to 
\cite{JorisAnalytic,JorisContinuation,JorisComposition}.
\cite{Gaertner} treats computability questions in the 
complementing, algebraic (aka \BSS) model of real number computation \cite{BCSS};
see also \cite{Braverman1} concerning their complexity theoretic relation.
For some further recursivity investigations in complex analysis refer,
e.g., to \cite{HertlingRiemann,McNicholl} or \mycite{\S6}{Escardo}.

\subsection{Representing, and Operating on, Power Series on the Closed Unit Disc} 
\mycite{Theorem~4.3.11}{Weihrauch} asserts complex power series evaluation
$(\bar a,z)\mapsto f_{\bar a}(z)$ to be uniformly computable
when providing, in addition to a $\rhody^2$--name of $z$
and a $(\rhody^2)^\omega$--name of $\bar a=(a_j)_{_j}\subseteq\IC$,
some $r\in\IQ$ with $|z|<r<R$ and some $A\in\IN$ such that it holds
\begin{equation} \label{e:Hadamard}
\forall j:\;\; |a_j|\;\leq\; A/r^j \enspace ,
\end{equation}
where $R(\bar a):=1/\limsup_j|a_j|^{1/j}$ denotes the
coefficient sequence's radius of convergence.
Note that such $A$ exists for $r<R$ but not necessarily 
for $r=R$ (consider $a_j=j$).
Now Equation~(\ref{e:Hadamard}) yields the tail 
estimate $\big|\sum_{j\geq n} a_j z^j\big|\leq A \frac{(|z|/r)^n}{1-|z|/r}$:
non-uniform in $|z|\to r\to R$. Indeed,
any power series is known to have a singularity
somewhere on its complex circle of convergence
(cf. Figure~\ref{f:domain}a), hence
its rate of convergence must deteriorate as $|z|\to R$;
and evaluation does not admit a uniform complexity bound 
in this representation. Instead we shall replace $r\in\IQ$
by an integer $K$ describing how `close' $r$ is to $R$.
For $R<\infty$, by scaling the argument $z$ 
it suffices to treat the case $|z|\leq 1<R$;
(The case $R=\infty$ will be the subject 
of Section~\ref{s:Entire} below.)

So consider the space $\Comegax{\cball(0,1)}$ of functions
holomorphic on some open neighbourhood of the closed
complex unit disc; put differently: functions $g$ whose
sequence $\bar a$ of Taylor coefficients
around 0 according to Equation~(\ref{e:Cauchy})
have radius of convergence $R(\bar a)>1$.
$\Comegax{\cball(0,1)}$ may thus be identified with $\Comega_1$ 
from Example~\ref{x:Param2}d) in the following

\begin{mydefinition} \lab{d:PowerSeries} 
On the space 
$\Comega_1=\big\{\bar a\subseteq\IC, R(\bar a)>1\big\}$,
consider the multivalued mapping
$(\bar a)\mapstoto\langle 2^K,A\rangle\in\IN$ with
$r:=\sqrt[K]{2}<R(\bar a)$ and $\forall j:|a_j|\leq A/r^j$
and let $\pi:=(\rhody^2)^\omega+\binary\langle 2^K,A\rangle$
denote the representation of $\Comegax{\cball(0,1)}$
enriching a $(\rhody^2)^\omega$--name of $\bar a$ 
with advice parameters $K$ in unary and $A$ in binary
(but not with $R$).
\end{mydefinition}
Note how $K$ encodes a lower bound on $r<R$.
More precisely, large values of $K$ mean $r$ may be close to 1,
i.e. the series possibly converging slowly as $|z|\nearrow1$.
Together with the upper bound $A$ on all $|a_j|$,
this serves both as discrete advice and as a parameter
governing the number of terms of the series to 
evaluate in order to assert a tail error $<2^{-n}$;
see the proof of Theorem~\ref{t:PowerSeries}a) below.
Lemma~\ref{l:Derivative}d) shows $K$ to be of asymptotic order
$\tfrac{1}{r-1}$; and provides bounds on how to transform
$K$ computationally when operating on $\bar a$. For 
example the coefficient sequence $a_j'=(j+1)\cdot a_{j+1}$ 
corresponding to the derivative has the same
radius of convergence $R(\bar a)=R(\bar a')$, classically,
but does not permit to deduce a bound $A'$ as in
Equation~(\ref{e:Hadamard}) from $A$ without increasing $r$.

\begin{figure}[htb]
\includegraphics[height=0.17\textheight]{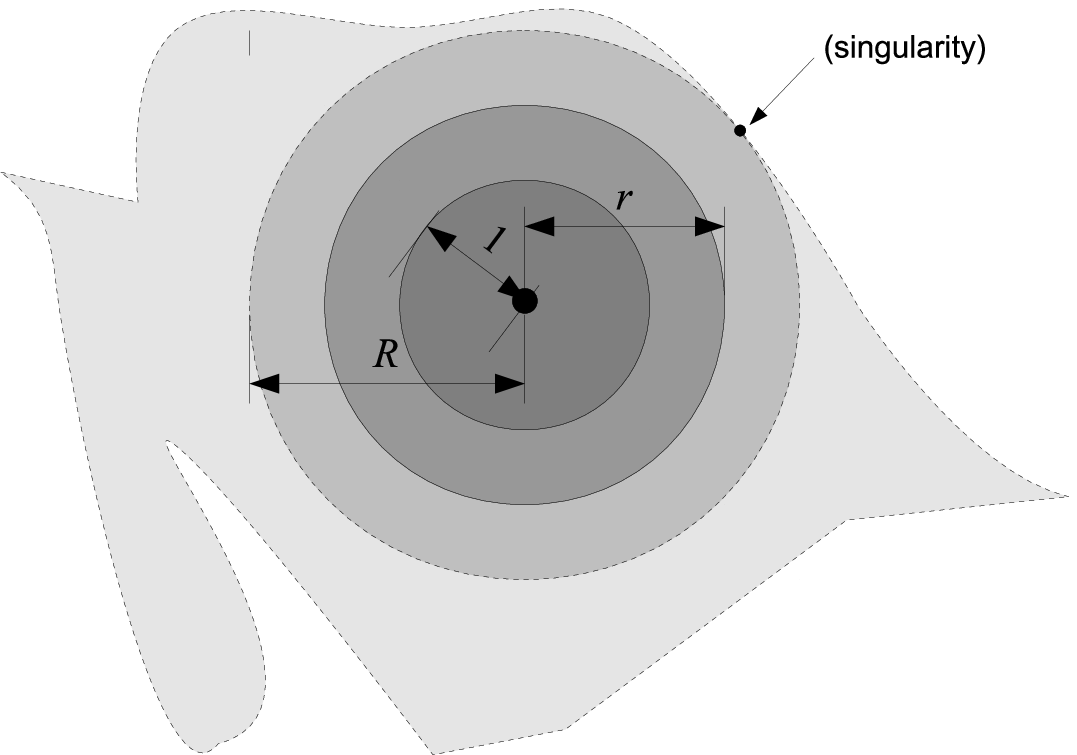}\hspace*{-6ex}%
\includegraphics[height=0.15\textheight]{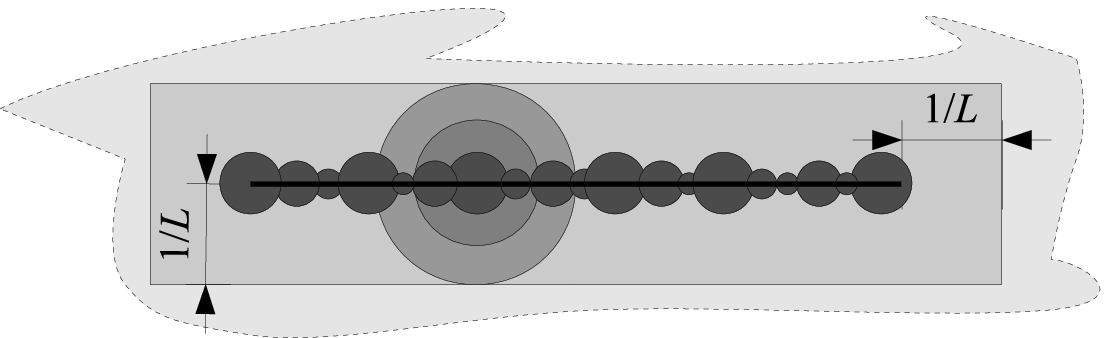}%
\caption{\label{f:domain}Geometry of the parameters underlying a) representation $\pi$
and b) representations $\alpha,\tilde\beta,\tilde\gamma$.}
\end{figure}

\begin{mylemma} \lab{l:Derivative}
\begin{enumerate}
\item[a)]
Let $r>1$. Then it holds $t\leq C\cdot r^t$ for all $t>0$,
where $C:=1/(e\cdot\ln r)$.
\item[b)]
More generally, $t^s\leq (\tfrac{s}{e\ln r})^{s}\cdot r^t$ for all $t>0$
and $s\geq0$ with the convention of $0^0=1$.
\item[c)] 
For all $N\in\IN$ it holds 
$1+\tfrac{\ln 2}{N}\leq\sqrt[N]{2}\leq 1+\tfrac{1}{N}$.
\item[d)]
Asymptotically $1/\ln(r)=\Theta(\tfrac{1}{r-1})$ as $r\searrow1$.
\item[e)]
With $\bar a\in\Comega_1$ and 
$(K,A)$ as in Definition~\ref{d:PowerSeries},
for every $|z|\leq r':=\sqrt[2K]{2}=\sqrt{r}$
it holds $|f_{\bar a}(z)|\leq A\cdot\tfrac{r'}{r'-1}$ and
$|f_{\bar a}^{(d)}(z)|\leq A\cdot d!/(r'-1)^{d+1}$, $d\in\IN$.
\item[f)]
For all $k\geq2$ and $x\geq k^2$ it holds $x^{k}\leq\exp(x)$. \\
For all $a,k\geq1$ and $b>0$ and $x\geq k^2\cdot a^{1/k}/b^2\geq4$
it holds $a\cdot x^k\leq\exp(x\cdot b)$.
\end{enumerate}
\end{mylemma}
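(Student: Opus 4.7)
\medskip\noindent
\textbf{Proof plan.} Parts (a)--(d) and (f) are elementary real-variable facts that I would dispatch by calculus and Taylor expansion; the one substantive statement is (e), which combines the a-priori Hadamard-type bound $|a_j|\le A/r^j$ with a geometric-series estimate on the enlarged disc of radius $r'=\sqrt{r}$.

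For (a), I would optimize $\varphi(t):=t/r^t=t\cdot e^{-t\ln r}$ over $t>0$: the unique critical point is $t^*=1/\ln r$, giving the maximum value $\varphi(t^*)=1/(e\ln r)=C$. Part (b) is the same optimization for $\varphi_s(t)=t^s/r^t$, whose critical point is $t^*=s/\ln r$ and maximum value $\big(s/(e\ln r)\big)^s$; the case $s=0$ is immediate from the convention $0^0=1$. For (c), the lower bound is $\sqrt[N]{2}=e^{(\ln 2)/N}\ge 1+(\ln 2)/N$ from $e^x\ge 1+x$, and the upper bound follows from $(1+1/N)^N\ge 2$ for $N\ge 1$ (the sequence is increasing with value $2$ at $N=1$), which rearranges to $1+1/N\ge 2^{1/N}$. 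Part (d) is then just the standard expansion $\ln r=(r-1)-\tfrac12(r-1)^2+\cdots$, which yields $\ln r/(r-1)\to 1$ as $r\searrow 1$, so $1/\ln r=\Theta(1/(r-1))$.

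For (e), I would directly estimate the power series on $|z|\le r'$ using $|a_j|\le A/r^j$:
\[
|f_{\bar a}(z)|\;\le\;\sum_{j\ge0}\frac{A}{r^j}(r')^j \;=\;\frac{A}{1-r'/r}\;=\;\frac{Ar}{r-r'}.
\]
The key algebraic identity is $r-r'=r'(r'-1)$ (since $r=r'^2$), which rewrites $Ar/(r-r')=Ar'/(r'-1)$, giving the claimed bound for $d=0$. For general $d$, term-by-term differentiation of the series yields
\[
|f_{\bar a}^{(d)}(z)|\;\le\;\sum_{k\ge 0}\frac{(k+d)!}{k!}\,\frac{A}{r^{k+d}}(r')^k\;=\;\frac{A\,d!}{r^d(1-r'/r)^{d+1}}\;=\;\frac{A\,d!\,r}{(r-r')^{d+1}},
\]
where I use the standard identity $\sum_{k\ge0}\binom{k+d}{d}x^k=(1-x)^{-(d+1)}$. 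Applying $r-r'=r'(r'-1)$ and $r=r'^2$ once more simplifies the right-hand side to $A\,d!/\big(r'^{d-1}(r'-1)^{d+1}\big)$, which (since $r'>1$) is bounded by $A\,d!/(r'-1)^{d+1}$, as claimed.

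For (f), the first inequality $x^k\le e^x$ for $x\ge k^2$, $k\ge 2$, I would prove by monotonicity of $\psi(x):=x-k\ln x$ on $[k,\infty)$ together with the basepoint estimate $\psi(k^2)=k(k-2\ln k)\ge 0$ (valid for all $k\ge 2$, since $k\ge 2\ln k$ there). For the second part, after taking logs the target becomes $bx\ge\ln a+k\ln x$; substituting $y:=bx/k$ and $c:=a^{1/k}k/b$ reduces it to $cy\le e^y$ for $y\ge c$, and the hypothesis $x\ge k^2 a^{1/k}/b^2\ge 4$ translates precisely to $y\ge c$ and $c\ge 4b/k$, which together with the first part (applied to the square, $y^2\le e^y$ for $y\ge 4$) suffice to conclude $cy\le y^2\le e^y$. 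The main obstacle is purely bookkeeping in part (f), matching the specific constant $k^2 a^{1/k}/b^2$ to the analytic inequalities; everything else is routine.
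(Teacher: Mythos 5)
Parts (a)--(d), (e) and the first half of (f) are fine. Your (e) is in fact cleaner than the paper's own computation: you expand $f^{(d)}$ via $\sum_{k\ge0}\binom{k+d}{d}x^k=(1-x)^{-(d+1)}$ and then use $r-r'=r'(r'-1)$, whereas the paper differentiates the geometric series $d$ times; your (b) (direct optimization of $t^s/r^t$ instead of substituting $r\mapsto r^{1/s}$ into (a)) and your (c) (upper bound from $(1+1/N)^N\ge2$ instead of convexity of $\exp$ on $[0;\ln 2]$) are equally valid variants of what the paper does.

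The gap is in the last step of the second claim of (f). After your substitution $y:=bx/k$, $c:=a^{1/k}k/b$, the hypotheses translate exactly into $y\ge c$ and $c\ge 4b/k$ --- but \emph{not} into $y\ge4$, which is what your appeal to ``the first part applied to the square'' ($y^2\le e^y$ for $y\ge4$) needs. Concretely, take $a=1$, $k=4$, $b=3/2$ and $x=k^2a^{1/k}/b^2=64/9\ge4$: then $y=bx/k=8/3<4$, so the cited inequality is not applicable as stated and nothing in your reduction closes the chain $cy\le y^2\le e^y$ (here $b<k$, and $y\ge c\ge 4b/k$ only gives $y\ge 3/2$). The statement is still true and the repair is one line: $y^2\le e^y$ holds for \emph{all} $y>0$, since $\max_{y>0}y^2e^{-y}=4e^{-2}<1$ --- equivalently, invoke part (b) with $r:=e$, $s:=2$ --- after which $cy\le y^2\le e^y$ follows from $y\ge c$ alone (the condition ``$\ge4$'' is then not even needed). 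The paper avoids this issue by applying the first part of (f) with the \emph{real} exponent $\kappa:=k\cdot a^{1/k}/b$ and the substitution $y:=x\cdot a^{1/k}$: the hypothesis gives $\kappa^2=k^2a^{2/k}/b^2\ge k^2a^{1/k}/b^2\ge4$ (using $a\ge1$), hence $\kappa\ge2$ and $y\ge\kappa^2$, so $y^\kappa\le\exp(y)$, and raising this to the power $b/a^{1/k}$ yields $a\cdot x^k\le\exp(bx)$; note that both your and the paper's proof of the first part indeed work for real exponents $k\ge2$, so this route is available to you as well.
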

\begin{proof}
Any local extreme point point $x_0$ of $0<x\mapsto x\cdot r^{-x}$ 
is a root of $\tfrac{d}{dx}x\cdot r^{-x}=r^{-x}-\ln(r)\cdot x\cdot r^{-x}$,
i.e. $t\cdot r^{-t}\leq C:=\max\{x\cdot r^{-x}:x>0\}$ attained at $x_0:=1/\ln(r)$.
Replacing $r$ with $r^{1/s}$ yields b). For c) observe 
$\sqrt[N]{2}=\exp(\ln 2/N)$ and analyze $\exp(x)$ on $[0;\ln2]\ni x:=1/N$.
Taylor expansion yields $\ln(1-x)\approx-\tfrac{1}{x}-\tfrac{1}{2x^2}-\cdots$ 
for $x:=1-r$. \\
Regarding e),
$|f_{\bar a}(z)|\leq\sum_{j} |a_j|\cdot|z|^j
\leq A\cdot\sum_{j} (r'/r)^j
A\tfrac{1}{1-r'/r}=A\cdot\tfrac{r'}{r'-1}$ and
$|f^{(d)}(z)|\leq A\cdot\sum_{j\geq d} j\cdot (j-1)\cdots (j-d+1)(|z|/r')^{j-d}
=A\tfrac{d^d}{dt^d}\sum_j q^j\big|_{q=|z|/r'}
\leq A\tfrac{d^d}{dq^d}\tfrac{1}{1-q}\big|_{q=|z|/r'\leq r'}
\leq A\cdot d!/(r'-1)^{d+1}$. \\
Turning to f), 
first record that $k^2\leq\exp(k)$
holds for all $k\geq2$. In particular
$x\leq\exp(x/k)$ is true for $x=k^2$;
and monotone in $x$ because of
$\partial_x x=1\leq 1/k\exp(x/k)=\partial_x \exp(x/k)$
for all $x\geq k^2$. \\
Concerning the second claim substitute $y:=x\cdot a^{1/k}$
and conclude from the first that
$y^{k\cdot a^{1/k}/b}\leq\exp(y)$
for all $y\geq k^2\cdot a^{2/k}/b^2\geq4$.
\qed\end{proof}

\begin{theorem} \lab{t:PowerSeries}
\begin{enumerate}
\item[a)] Evaluation 
~$\Comegax{\cball(0,1)}\times\cball(0,1)\ni (f,z)\mapsto f(z)\in\IC$~
is fully polytime $(\pi\times\rhody^2,\rhody^2)$--computable,
that is within time polynomial in $n+K+\log A$
for the parameters $(K,A)$ according to Definition~\ref{d:PowerSeries}.
\item[b)] Addition 
~$\Comegax{\cball(0,1)}
\times \Comegax{\cball(0,1)}
\ni (f_1,f_2)\mapsto f_1+f_2\in \Comegax{\cball(0,1)}$~
is fully polytime $(\pi\times\pi,\pi)$--computable, 
that is within time polynomial in $n+K_1+K_2+\log A_1+\log A_2$.
\item[c)] Multiplication
~$\Comegax{\cball(0,1)}
\times \Comegax{\cball(0,1)}
\ni (f_1,f_2)\mapsto f_1\cdot f_2\in \Comegax{\cball(0,1)}$~
is fully polytime $(\pi\times\pi,\pi)$--computable.
\item[d)] Differentiation
~$\Comegax{\cball(0,1)}\ni f\mapsto f'\in \Comegax{\cball(0,1)}$~
is fully polytime $(\pi,\pi)$--computable. \\
More generally $d$-fold differentiation
~$\Comegax{\cball(0,1)}\times\IN\ni (f,d)\mapsto f^{(d)}\in \Comegax{\cball(0,1)}$~
is fully polytime $\big(\pi+\unary(d),\pi\big)$--computable,
that is within time polynomial in $n+K+\log A+d$.
\item[e)] Anti-differentiation
~$\Comegax{\cball(0,1)}\ni f\mapsto \int f=F\in \Comegax{\cball(0,1)}$~
is fully polytime $(\pi,\pi)$--computable;
and $d$-fold anti-differentiation
is fully polytime $\big(\pi+\unary(d),\pi\big)$--computable.
\item[f)] 
Parametric maximization, that is both the mappings
$\MAX\circ\Re$ and $|\MAX|$ from \linebreak
$\Comegax{\cball(0,1)}\times[-1;1]^2$ to $\IR$ 
are fully polytime $(\pi\times\rhody^2,\rhody)$--computable,
where
\begin{align*}
\MAX\circ\Re&:
(f,u,v)\;\mapsto\; \max\big\{\Re f(x):\min(u,v)\leq x\leq\max(u,v)\big\}
\quad\text{ and }\quad \\
|\MAX|&:(f,u,v)\;\mapsto\;\max\big\{|f(x)|:\min(u,v)\leq x\leq \max(u,v)\big\}
\enspace . 
\end{align*}
\item[g)] 
As a converse to a), given a $\dyrho$--name of $f$
as well as $K\in\IN$ and an integer upper bound
$B$ on $\max\big\{|f(z)|: |z|\leq\sqrt[K]{2}\big\}$ and $j\in\IN$,
the coefficient $a_j=f^{(j)}(0)/j!$
is computable within time polynomial in $n+K+\log B+j$; formally:
the partial function 
\[ \Comegax{[-1;1]}\times\IN\times\IN
\;\ni\; (f,K,B)\;\mapsto \bar a:\quad f=f_{\bar a}, \;
\forall |z|\leq\sqrt[K]{2}: |f(z)|\leq B \]
is polytime $\big(\dyrho+\binary\langle 2^K,B\rangle,\rhody^\omega\big)$--computable.
\end{enumerate}
\end{theorem}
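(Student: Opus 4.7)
The overall strategy rests on Cauchy-type bounds $|a_j|\leq A/r^j$ together with careful bookkeeping of the parameters $(K,A)$ when operating on power series. For (a), estimate the tail $\sum_{j\geq N}|a_j z^j|\leq A\sum_{j\geq N}(1/r)^j\leq\frac{AK}{\ln 2}\cdot 2^{-N/K}$ using Lemma~\ref{l:Derivative}c) to lower-bound $r-1\geq(\ln 2)/K$; so $N=\calO(K(n+\log A+\log K))$ forces the tail below $2^{-n-1}$. Summing $N$ dyadic approximations of the terms $a_j z^j$, each to precision $2^{-n-\log N-1}$, runs in polytime in $n+K+\log A$.

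For (b)--(e) the task is to produce a valid $\pi$--name of the result. In (b), coefficient-wise addition $a_j:=a_{j,1}+a_{j,2}$ yields $(K',A')=(\max(K_1,K_2),\,A_1+A_2)$. In (c), the Cauchy product satisfies $|a_j|\leq(j+1)A_1A_2/r_{\min}^j$ with $r_{\min}=2^{1/\max(K_1,K_2)}$; absorb the factor $j+1$ by shrinking the radius, setting $K':=2\max(K_1,K_2)$ so $r'=\sqrt{r_{\min}}$ and applying Lemma~\ref{l:Derivative}a) with base $r_{\min}/r'$ to conclude $(j+1)(r'/r_{\min})^j=\calO(K')$, hence $A':=\calO(K')\cdot A_1A_2$. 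For (d), derivative coefficients are $(j+1)a_{j+1}$ and the same trick yields $K':=2K$, $A':=\calO(K)\cdot A$; for $d$-fold differentiation, $|a^{(d)}_j|\leq(j+d)^d\cdot A/r^{j+d}$, and the bound $(j+d)^d\leq\bigl(2Kd/(e\ln 2)\bigr)^d\cdot 2^{(j+d)/(2K)}$ from Lemma~\ref{l:Derivative}b) gives $K':=2K$ and $\log A'=\calO(d\log(Kd))$, polynomial in $d+K+\log A$. For (e), the antiderivative coefficient $a_{j-1}/j$ keeps $K':=K$ and $A':=2A$; the $d$-fold case loses at most a factor $2^d$ in $A$, and $\log A'\leq\log A+d$.

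For (f), invoke (a) to obtain a truncated polynomial $p(z):=\sum_{j\leq N}a_j z^j$ of degree $N=\poly(n+K+\log A)$ with $\|f-p\|_\infty\leq 2^{-n-3}$ on $\cball(0,1)$. The maximum of $\Re p$ on $[u,v]$ is attained at $u$, at $v$, or at a real root of $(\Re p)'$ in $(u,v)$; since $(\Re p)'$ is a real dyadic polynomial of degree $N-1$, its real roots can be isolated to arbitrary prescribed precision in time polynomial in $N$ by standard root-isolation algorithms (e.g.\ Sch\"onhage's or Pan's). Evaluating $\Re p$ via (a) at the $\calO(N)$ candidate points and the endpoints and taking the maximum gives $\MAX\circ\Re$; for $|\MAX|$ apply the same procedure to the degree-$2N$ real polynomial $|p(x)|^2=p(x)\overline{p(x)}$ and finally take a square root.

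Part (g) is the main technical obstacle: one is given only real values of $f$ via a $\dyrho$--name but must extract a single complex Taylor coefficient $a_j=f^{(j)}(0)/j!$. The plan is to iterate Example~\ref{x:Param3}d): Cauchy's estimate on $|z|\leq r':=\sqrt[2K]{2}$ from Lemma~\ref{l:Derivative}e) gives the explicit upper bound $\|f^{(k+2)}\|_\infty\leq B\cdot(k+2)!/(r'-1)^{k+3}$, computable from $B$ and $K$, which supplies the second-derivative bound required to make each step of $\dyrho$--differentiation polytime. Iterating $j$ times produces $f^{(j)}$ as a $\dyrho$--name; dividing its value at $0$ by $j!$ gives $a_j$. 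Careful precision tracking, compensating for a loss of only $\calO(K+\log k)$ bits at the $k$-th differentiation step, keeps the total running time polynomial in $n+K+\log B+j$; and the a-priori tail estimate $|a_j|\leq B\cdot 2^{-j/K}$ permits shortcutting with output $0$ whenever $j>K(n+\log B+1)$, so the case analysis stays consistent with the stated parameterization.
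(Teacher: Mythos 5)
Your parts (a)--(e) follow the paper's proof essentially verbatim: the same truncation length $N=\calO\big(K\cdot(n+\log A+\log K)\big)$ for evaluation, and the same parameter bookkeeping for sum, Cauchy product, $d$-fold differentiation and anti-differentiation, with the factors $j+1$ resp.\ $(j+d)^d$ absorbed by passing from $K$ to $2K$ exactly as the paper does via Lemma~\ref{l:Derivative}a+b). In (f) you take a genuinely different route: the paper maximizes the truncated dyadic polynomial by bisection on the value $y$ of an existentially quantified formula, decided by real quantifier elimination \mycite{Exercise~11.7}{Basu}, whereas you isolate the real roots of $(\Re p)'$ and evaluate at the candidate points. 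That works: $p'$ has \emph{exact} dyadic coefficients, so Sturm/Descartes-type isolation on $[u';v']$ is polytime in $\deg p$ and the coefficient length, and the Lipschitz bound $\|f'\|\leq A\cdot\calO(K^2)$ from Lemma~\ref{l:Derivative}e) controls both the endpoint perturbation and the precision to which roots must be located; just be careful citing Sch\"onhage/Pan, whose root finders are stated for (essentially) monic polynomials -- either normalize or use an isolation method insensitive to a small leading coefficient.

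Part (g) contains a genuine gap. The paper does \emph{not} iterate pointwise differentiation; it invokes the global method of \cite{Mueller87} (in effect Chebyshev/Lagrange interpolation, cf.\ the proof of Theorem~\ref{t:Gevrey}a), where one single approximating polynomial of degree polynomial in $n+K+\log B+j$ is built and its coefficients are read off, the Cauchy bound $|a_i|\leq B/r^i$ controlling the error. Your plan to iterate Example~\ref{x:Param3}d) $j$ times cannot be polytime in $j$: to compute $g'(x)$ to absolute error $\delta$ from approximate values of $g$, using only an upper bound $M\geq\|g''\|_\infty$, one must take a step size $h\approx\delta/M$ and then needs $g$ to error $\approx\delta\cdot h\approx\delta^2/M$ -- i.e.\ the working precision (in bits) roughly \emph{doubles} at every differentiation step, plus an additive $\log M$. (Central or higher-order difference formulas only lower the factor towards $1$ at the price of many nodes, which is the interpolation approach in disguise.) Hence after $j$ iterations the precision demanded of the original $\dyrho$--name of $f$, and with it the running time, is $2^{\Omega(j)}$, not ``$n$ plus $\calO(K+\log k)$ per step'' as you assert without justification; equivalently, on the operator level you compose $j$ polytime maps and obtain a $j$-fold composition of polynomial bounds. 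Neither mitigation you offer repairs this: dividing by $j!$ at the end buys only about $\log(j!)$ bits of slack, which the doubling consumes after $\calO(\log j)$ steps, and the cutoff ``output $0$ for $j>K(n+\log B+1)$'' still leaves $j$ as large as $\Theta(K\cdot n)$, for which the iterated scheme needs time exponential in $j$. To close the gap, replace the iteration by the one-shot argument: interpolate $f$ at polynomially many Chebyshev nodes (or cite \cite{Mueller87} as the paper does), bound the interpolation error via analyticity with $A:=B$, $r=\sqrt[K]{2}$, and extract the $j$-th coefficient from that single polynomial.
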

Note that, for real-valued $f$, 
$\min\{f(z):z\}=-\max\{-f(z):z\}$ and
$\max\{|f(z)|:z\}=\max\big(\max\{f(z):z\},-\min\{f(z):z\}\big)$.
Hence the above Items~a) to f) indeed constitute a natural 
choice of  basic primitive operations on $\Comegax{\cball(0,1)}$.

\begin{proof}[Theorem~\ref{t:PowerSeries}]
\begin{enumerate}
\item[a)]
Given $K$, calculate $r:=\sqrt[K]{2}$ 
within time polynomial in $n+K$. 
Then evaluate the first roughly $N:=n\cdot K+\log A$ terms
of the power series on the given $z$:
according to Equation~(\ref{e:Hadamard})
the tail $\sum_{j\geq N} |a_j|\cdot|z|^j
\leq A\cdot\sum_{j\geq N} (|z|/r)^j=
A\cdot(|z|/r)^N\tfrac{1}{1-|z|/r}$
is then small of order 
$(|z|/r)^{n\cdot K}=\calO(2^{-n})$
because of $|z|\leq1$.
\item[b)]
Given $(\bar a,K,A)$ and $(\bar b,L,B)$
output $(\bar c,M,C)$ where $c_j:=a_j+b_j$ 
and $C:=A+B$ and $M:=\max(K,L)$.
In view of Example~\ref{x:Param1}b),
approximating $c_j$ up to error $2^{-m}$
this is possible in time polynomial in
$m+\log(A+B)+j\cdot(K+L)$; now recall
Definition~\ref{d:NumberComplex}e)
\item[c)]
Similarly, given $(\bar a,K,A)$ and $(\bar b,L,B)$
output $(\bar c,M,C)$ where 
$c_j:=\sum_{i=0}^j a_i\cdot b_{j-i}$ 
and $C$ is some integer $\geq A\cdot B\cdot\big(1+M/(e\cdot\ln 2)\big)$
where $M:=2\max(K,L)$. Indeed, with 
$r_A\geq(2)^{\tfrac{1}{K}}$ and $r_B\geq(2)^{\tfrac{1}{L}}$
and $r=\min(r_A,r_B)\geq(2)^{\tfrac{2}{M}}$,
$1<r':=(2)^{\tfrac{1}{M}}\leq\sqrt{r}<r$ implies
$j\leq\sqrt{r^j}/(e\cdot\ln\sqrt{r})\leq
M\cdot\sqrt{r}^{j}/(e\cdot\ln2)$
according to Lemma~\ref{l:Derivative}a);
hence $|c_j|\leq\sum_{i=0}^j A/r_A^i\cdot B/r_B^{j-i}
=A\cdot B\cdot (j+1)/r^j\leq C/\sqrt{r}^j$.
\item[d)] 
Given $(\bar,K,A)$ output $(\bar a',K',A')$ with
$(a_j'):=\big((j+1)\cdot a_{j+1}\big)$ and $K':=2K$ and 
$A'$ some integer $\geq \tfrac{A}{r}\cdot(1+\tfrac{2K}{e\ln2})$.
Indeed, $1<r':=(2)^{\tfrac{1}{K'}}=\sqrt{r}<r$ implies
$j\leq2K\cdot\sqrt{r}^{j}/(e\ln2)$
according to Lemma~\ref{l:Derivative}a);
hence $(j+1)\cdot|a_{j+1}|\leq A'/\sqrt{r}^j$. \\
More generally, in view of Lemma~\ref{l:Derivative}b), 
output $\big((j+1)\cdot(j+2)\cdots(j+d)\cdot a_{j+d}\big)_{_j}$ 
and $K':=2K$ and 
$A'\geq \tfrac{A}{r^d}\cdot d^d\cdot (1+2K)^d
\geq \tfrac{A}{r^d}\cdot(1+\tfrac{2Kd}{e\ln2})\cdot
(2+\tfrac{2Kd}{e\ln2})\cdots(d+\tfrac{2Kd}{e\ln2})$.
\item[e)]
Given $(\bar a,K,A)$, in case $d=1$ output
$a_0':=0$ and $a_j':=a_{j-1}/j$ and $K':=K$ and $A'\geq A\cdot r$. 
In the general case 
$a_j'=a_{j-d}/j/(j-1)/\cdots/(j-d+1)$ and
$A'\geq A\cdot r^d$.
\item[f)]
First suppose that $f|_{[-1;1]}$ is real, i.e. $a_j\in\IR$.
Similar to a), the first $d:=\calO(n\cdot K+\log A)$ terms of the series
yield a polynomial $p\in\ID_{n+1}[X]$ of $\deg(p)<d$ with dyadic
coefficients uniformly
approximating $f$ up to error $2^{-n-1}$. In particular
it suffices to approximate the maximum of $p$ on $[u',v']$
up to $2^{-n-1}$ (for $u',v'\in\ID$ 
sufficiently close to $u$ and $v$, respectively).
This can be achieved by bisection on $y$
w.r.t. the following existentially quantified formula 
in the first-order equational theory of the reals 
with dyadic parameters which,
involving only a constant number of polynomials and quantifiers,
can be decided in time polynomial in the degree and 
binary coefficient length \mycite{Exercise~11.7}{Basu}:
\[ \Phi(u',v',p_0,\ldots,p_{d-1}) \;\;:= \;\; ``\exists x,r,s,t\in\IR: \;\;
\underbrace{x=u'+r^2}_{\geq u'} \;\wedge\; 
\underbrace{v'=x+s^2}_{x\leq v'} \;\wedge\; 
\underbrace{p(x)=y+t^2}_{p(x)\geq y} "  \]
In the general case of a complex valued $f|_{[-1;1]}$,
$|f|^2=\Re(f)^2+\Im(f)^2$ is uniformly approximated by
the real polynomial $q:=\Re(p)^2+\Im(p)^2$, thus
$\max|f|^2$ is polytime computable as above. Since both 
$\IR\ni t\mapsto t^2$ and $\IR_+\ni s\mapsto\sqrt{s}$
are monotonic and polytime computable,
the same follows\footnote{%
We thank \person{Robert Rettinger} for pointing this out 
during a meeting in Darmstadt on August 22, 2011} 
for $\max|f|=\sqrt{\max|f|^2}$.
\item[g)]
According to Cauchy's differentiation formula (\ref{e:Cauchy}),
Equation~(\ref{e:Hadamard}) is satisfied with $A:=B$.
Polytime computability of the sequence $a_j=f^{(j)}(0)/j!$
from evaluations of $f|_{[-1;1]}$ is due \cite{Mueller87};
cmp. also the proof of \mycite{Theorem~6.9}{Ko91} and
(that of) Theorem~\ref{t:Gevrey}a) below.
and note that both $\Re f$ and $\Im f$ are real analytic.
Since $|f'(x)|\leq A/(r'-1)^2\leq A\cdot(\tfrac{2K}{\ln2})^2$
according to a) and Lemma~\ref{l:Derivative}c),
we known that $\mu(n)=n+\lceil\log A+2\log\tfrac{2K}{\ln2}\rceil$
is a modulus of continuity of $f|_{[-1;1]}$
and can suffice with evaluations on the dense subset
$[-1;1]\cap\ID$.
\qed\end{enumerate}\end{proof}

\COMMENTED{%
\begin{myproposition} \lab{p:Polynomial}
\begin{enumerate}
\item[a)] minimize convex functions
\item[b)] Given $d\in\IN$ and $a_0,\ldots,a_d\in\IR$,
\note{Can we improve the running time from $A$ to $\log A$?}
the maximum $[-1;1]$ of $x\mapsto\sum_{j=0}^d a_jx^j$
is computable within time polynomial in $A+d+n$,
where $A:=\max\{|a_0|,\ldots,|a_d|\}$.
\end{enumerate}
\end{myproposition}

Note that Sch\"{o}nhage's polytime finder 
\cite{Schoenhage} requires monic polynomials and
thus does not seem applicable
to locate the zeroes of $p'$ as
putative positions of extrema.
}

\subsection{Representing, and Operating on, Analytic Functions on $[0;1]$} \lab{s:Analytic}
We now consider functions $f\in\Comega[0;1]$, that is functions
analytic on some complex neighbourhood of $[0;1]$. 
Being members of $\Lip[0;1]$, the
second-order representation $\dyrhoLip$ applies. On the other hand,
$f$ is covered by finitely many power series, each naturally
encoded via $\pi$. And Equation~\ref{e:Hadamard1} suggests yet
yet another encoding:

\begin{mydefinition} \lab{d:Analytic}
Let $\Comegax{[0;1]}$ denote the space of complex-valued functions 
analytic on some complex neighbourhood of $[0;1]$.
\begin{enumerate}
\item[a)]
Let $M\in\IN$ and $x_m\in[0;1]$ 
and $a_{m,j}\in\IC$ and $L_m\in\IN$ and $A_m\in\IN$
($1\leq m\leq M$, $j\in\IN$). \\ We say that
$\big(M,(x_m),(a_{m,j}),(L_m),(A_m)\big)$
\emph{represents} $f\in \Comegax{[0;1]}$ if it holds
\begin{equation} \label{e:Cover}
[0;1]\subseteq\bigcup\limits_{m=1}^M \big[x_m-\tfrac{1}{4L_m},x_m+\tfrac{1}{4L_m}\big]
\;\;\text{and}\;\;
f^{(j)}(x_m)=a_{m,j}\cdot j!
\;\;\text{and}\;\;
|a_{m,j}|\leq A_m\cdot L_m^j
\end{equation}
An $\alpha$--name of $f$ encodes ($M$ in unary and,
jointly in the sense Remark~\ref{r:TTE}c) 
for each $1\leq m\leq M$ the following:
a $\rhody$--name of $x_m$, a $(\rhody^2)^\omega$--name
of $(a_{m,j})_{_j}$ as well as advice $A_m$ in binary and $L_m$ in unary
with parameter $\langle \log A_1,{L_1},\log A_2,{L_2},\ldots,\log A_m,{L_m}\rangle$.
\item[b)]
Define second-order representation $\tilde\beta$ to encode $f$ via
(see Figure~\ref{f:domain}b)
\begin{itemize}\itemsep0pt%
\item a $\dyrho$--name of $\Re f|_{[0;1]}$
\item and one of $\Im f|_{[0;1]}$, together with 
\item an integer $L$ in unary such that $f\in\Comegax{\barR_L}$, \\ where
$\barR_L:=\{ x+iy \mid -\tfrac{1}{L}\leq y\leq\tfrac{1}{L}, -\tfrac{1}{L}\leq x\leq 1+\tfrac{1}{L} \}$
\item and a binary integer upper bound $B$ to $|f|$ on said $\barR_L$.
\end{itemize}
\item[c)]
A $\tilde\gamma$--name of $f\in \Comegax{[0;1]}$ 
consists of a $\dyrho$--name of $\Re f|_{[0;1]}$ and one of $\Im f|_{[0;1]}$
enriched with advice parameters $A$ (in binary) and $K$ (in unary) such that 
$|f^{(j)}(x)|\leq A\cdot K^j\cdot j!$ holds for all $0\leq x\leq 1$.
\end{enumerate}
\end{mydefinition}
Again, the binary and unary encodings have been chosen carefully:
Analogous to $K$ in Definition~\ref{d:PowerSeries} upper bounding $1/(R-1)$,
the distance of the domain $\cball(0,1)$ to a singularity,
here $1/L$ constitutes a lower bound
on the distance of $[0;1]$ to any complex singularity of $f$.
Specifically the size of a $\tilde\beta$--name $\psi$ of $f$ 
is $|\psi|(n)=\Theta(n+L+\log B)$;
and that of a $\tilde\gamma$--name is $\Theta(n+K+\log A)$.

\begin{myexample} \label{x:Poles}
\begin{enumerate}
\item[a)]
For $x_m\in[0;1]$ and $y_m>0$ ($1\leq m\leq M$) 
the function 
$z\mapsto \prod_m \big((z-x_m)^2+y_m^2\big)^{-1}$
is analytic on $[0;1]$ with complex singularities
at $x_m\pm iy_m$.
\item[b)]
The Gaussian function $g_1(x)=\exp(-x^2)$ 
employed in the proof of Example~\ref{x:Param3}f)
has $g_1^{(j)}(x)=(-1)^j\cdot H_j(x)\cdot g_1(x)$ 
with the \textsf{Hermite Polynomials} 
\[ H_0=1, \quad H_1(x)=2x, \quad H_{n+1}(x)=2x\cdot H_n(x)-2n\cdot H_{n-1}(x) \enspace . \]
A simple uniform bound on $g_1^{(j)}$ is obtained
using Equation~(\ref{e:Cauchy}):
\[ |g_1^{(j)}(x)| \;\leq\; \tfrac{j!}{2\pi} \cdot \int\nolimits_{|z-x|=1} |\exp(-z^2)|/1^j\,dz
\;\leq\; j! e \]
because $|\exp(-z^2)|=\exp\big(-\Re^2(z)+\Im^2(z)\big)\leq\exp(1)$
due to $|z-x|=1$ with $x\in\IR$.
\end{enumerate}\end{myexample}
Hence in both cases, $f|_{[0;1]}$ being large/steep or a complex
singularity residing close-by, more time is (both needed and)
granted for polytime calculations on $f$.
Similarly for the parameterized representation $\alpha$.
Also note that a $\tilde\gamma$--name of $f$ encodes only data
on the restriction $f|_{[0;1]}$ whereas 
both $\alpha$ and $\tilde\beta$ explicitly refer
to the complex differentiable $f$ with open domain;
cmp. \cite{RealAnalytic}. 

\begin{myremark} \lab{r:Nonunif}
\begin{enumerate}
\item[a)]
Both $\tilde\beta$ and $\tilde\gamma$ enrich $\dyrho$--names
with different
discrete information (and thus without affecting the nonuniform
complexity where $n$ is considered the only parameter)
of the kind commonly omitted
in nonuniform claims and are thus 
candidates for uniformly refining Example~\ref{x:Upper}.
\item[b)]
We could (and in Section~\ref{s:Gevrey} will) combine
in the definition of $\tilde\gamma$ 
the binary $A$ with unary $K$ into one single
binary $C:=A\cdot 2^K$ satisfying $\|f^{(j)}\|\leq C\cdot (\log C)^j\cdot j!$
\end{enumerate}
\end{myremark}

\begin{theorem} \lab{t:Analytic}
\begin{enumerate}
\item[a)]
On $\Comegax{[0;1]}$, $\alpha$ and $\tilde\beta$ and $\tilde\gamma$
constitute mutually (fully) polytime-equivalent (parameterized) representations.
\item[b)]
The following operations from 
Theorem~\ref{t:PowerSeries}
are in fact uniformly polytime computable
on $\Comegax{[0;1]}$: \\
Evaluation (i), addition (ii), multiplication (iii), 
iterated differentiation $(\sdone^d,f)\mapsto f^{(d)}$ (iv), 
parametric integration (v), 
and parametric maximization (vi). 
\item[c)]
Composition (vii), that is the partial operator
\[ (g,f)\mapsto g\circ f \quad\text{ for }\quad
f,g\in\Comegax{[0;1]}
\quad\text{ with }\quad
f\big|_{[0;1]}\in\Comegax{[0;1],[0;1]} \enspace , \]
is fixed-parameter computable in the sense of
Definition~\ref{d:dualuse}h). More precisely in terms of 
$(\tilde\beta\times\tilde\beta,\tilde\beta)$--computability,
given integers $A,K$ with $|f(z)|\leq A$ on $\barR_K$
and $(L,B)$ with $|g(z)|\leq B$ on $\barR_L$,
$g\circ f$ is analytic on $\barR_{2AKL}$ and thereon bounded (like $g$ itself) by $B$.
Note that (only) the unary length of $2AKL$ is exponential in 
(only) the binary length of $A$.
\\
Similarly concerning
$(\tilde\gamma\times\tilde\gamma,\tilde\gamma)$--computability,
$f$ with advice parameters $(A,K)$ 
and $g$ with advice parameters $(B,L)$ 
gets mapped to $g\circ f$ with advice parameters
$\big(\tfrac{A\cdot B\cdot L}{1+A\cdot L},K\cdot(1+A\cdot L)\big)$,
where the unary length of (only) the latter is exponential in (only)
the binary length of $A$.
\end{enumerate}
\end{theorem}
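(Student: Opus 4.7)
The plan is to prove (a) via a cycle of polytime reductions $\tilde\beta\reduceP\tilde\gamma\reduceP\alpha\reduceP\tilde\beta$. For $\tilde\beta\reduceP\tilde\gamma$ apply Cauchy's formula~(\ref{e:Cauchy}) on the circle of radius $1/L$ around any $x\in[0;1]\subseteq\barR_L$: the bound $|f|\leq B$ there converts to $|f^{(j)}(x)|\leq B\cdot L^j\cdot j!$, so $(A,K):=(B,L)$ witnesses $\tilde\gamma$, with the $\dyrho$--parts of the name transferred unchanged. For $\tilde\gamma\reduceP\alpha$ take $M:=4K+1$, $x_m:=m/(4K)$, $L_m:=K$, $A_m:=A$; the coefficients $a_{m,j}:=f^{(j)}(x_m)/j!$ are second-order polytime computable from the $\dyrho$--name of $f|_{[0;1]}$ and $(A,K)$ by Theorem~\ref{t:PowerSeries}g), applied after affinely rescaling $[x_m-\tfrac{1}{2K};x_m+\tfrac{1}{2K}]$ to $[-1;1]$. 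For $\alpha\reduceP\tilde\beta$, each local series $\sum_j a_{m,j}(z-x_m)^j$ is majorized by $\sum_j A_m\cdot 2^{-j}=2A_m$ on $\cball(x_m,1/(2L_m))$; the union of these discs covers a complex neighborhood containing $\barR_L$ for $L:=4\max_m L_m$, with combined bound $B:=2\max_m A_m$. The $\dyrho$--name of $f|_{[0;1]}$ is produced on each dyadic query by locating the nearest center $x_m$ and invoking Theorem~\ref{t:PowerSeries}a).

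For (b) I would work in the $\alpha$ representation, whose local-Taylor structure exposes Theorem~\ref{t:PowerSeries} directly. Evaluation~(i) selects the nearest center and calls Theorem~\ref{t:PowerSeries}a). For addition~(ii) and multiplication~(iii), form a common refinement of the two input partitions; around each center of the refined cover re-expand each input's Taylor series (a shift of expansion point by $\leq 1/\calO(L)$ is second-order polytime on the coefficient sequence, e.g.\ via iterated differentiation) and apply Theorem~\ref{t:PowerSeries}b/c) coefficient-wise. Iterated differentiation~(iv) applies Theorem~\ref{t:PowerSeries}d) locally. Parametric integration~(v) antidifferentiates each local series by Theorem~\ref{t:PowerSeries}e) and fixes constants of integration by propagating the value from the left endpoint across the $\calO(M)$ intervals in a single sweep. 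Parametric maximization~(vi) takes the max over the $\calO(M)$ sub-intervals, each handled by Theorem~\ref{t:PowerSeries}f).

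For (c) the $\tilde\beta$--bound is geometric. Cauchy converts $|f|\leq A$ on $\barR_K$ into $|f'|\leq AK$ on a neighborhood of $[0;1]$. For $z\in\barR_{2AKL}$ choose $x\in[0;1]$ with $|z-x|\leq 1/(2AKL)$; then $|f(z)-f(x)|\leq AK\cdot 1/(2AKL)=1/(2L)$, so $f(z)\in\barR_L\subseteq\dom(g)$ with $|g(f(z))|\leq B$. Hence $g\circ f$ is analytic and bounded by $B$ on $\barR_{2AKL}$; its $\dyrho$--name on $[0;1]$ arises by composing the polytime evaluators of $f$ and $g$ from part~(b). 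For the $\tilde\gamma$--version I would use the majorant-series method. With $\phi(h):=f(x_0+h)-f(x_0)$ satisfying $|\phi^{(i)}(0)|/i!\leq AK^i$ for $i\geq 1$, one has $|\phi(h)|\leq AK|h|/(1-K|h|)$; combined with $|g^{(k)}(f(x_0))|/k!\leq BL^k$ this yields
\[
\bigl|(g\circ f)(x_0+h)\bigr| \;\leq\; \sum_{k\geq 0}BL^k\Big(\tfrac{AK|h|}{1-K|h|}\Big)^{\!k} \;=\; \frac{B(1-K|h|)}{1-K(1+AL)|h|}
\]
valid for $|h|<1/(K(1+AL))$. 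Extracting the $j$-th Taylor coefficient of this majorant yields $|(g\circ f)^{(j)}(x_0)|/j!\leq \tfrac{ABL}{1+AL}\cdot\bigl(K(1+AL)\bigr)^j$ for $j\geq 1$, matching the claimed advice parameters $\bigl(\tfrac{ABL}{1+AL},K(1+AL)\bigr)$. The main obstacle is precisely this last bookkeeping: naively composing the reductions of part~(a) with the $\tilde\beta$--bound $(B,2AKL)$ produces only an additive blow-up, whereas the multiplicative structure claimed by the theorem requires the direct majorant-series analysis above.
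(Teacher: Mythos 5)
Your argument is correct and, for the most part, parallels the paper's own proof: part~(a) is the same cycle $\tilde\beta\reduceP\tilde\gamma\reduceP\alpha\reducep\tilde\beta$ with the same ingredients (Cauchy's formula giving $(A,K):=(B,L)$; the equispaced grid $x_m$ with $L_m:=K$, $A_m:=A$ fed into Theorem~\ref{t:PowerSeries}g) after affine rescaling; local evaluation via Theorem~\ref{t:PowerSeries}a) for the way back --- your constants $L:=4\max_m L_m$, $B:=2\max_m A_m$ are as loose at the corners of $\barR_L$ as the paper's own, which is immaterial for polytime claims), and part~(c)'s $\tilde\beta$--claim is the paper's Cauchy-plus-Mean-Value argument. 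Two local deviations are worth recording. In~(b) you stay entirely inside $\alpha$ and handle addition and multiplication by re-expanding both inputs on a common refined grid; the paper instead treats (ii),(iii) directly in $\tilde\beta$ (add or multiply the $\dyrho$--names and take $L:=\max(L_1,L_2)$, $B:=B_1+B_2$ resp.\ $B_1\cdot B_2$), which needs no re-expansion at all. Your route does work, but the re-expansion step is best justified as an application of part~(a) itself (convert to $\tilde\gamma$ and back to $\alpha$ on a common equispaced grid, i.e.\ Theorem~\ref{t:PowerSeries}g) on the rescaled function) rather than ``iterated differentiation'', which would require its own tail and parameter bookkeeping. In~(c) the paper obtains the $\tilde\gamma$--parameters by citing \mycite{Proposition~1.4.2}{RealAnalytic} via Fa\`{a} di Bruno, whereas you re-derive exactly the same constants $\big(\tfrac{ABL}{1+AL},K(1+AL)\big)$ by an explicit majorant-series computation; this is essentially the same argument made self-contained, but extracting coefficientwise bounds from the majorant $B(1-K|h|)/\big(1-K(1+AL)|h|\big)$ is licensed precisely by the nonnegativity of the Fa\`{a} di Bruno coefficients (Fact~\ref{f:Lombardi}f), which deserves a sentence. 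Finally, a small fixable slip in~(c): Cauchy gives $|f'|\leq AK$ only on $[0;1]$, whereas the segment joining your $x\in[0;1]$ to $z\in\barR_{2AKL}$ lies in $\barR_{2AKL}\subseteq\barR_{2K}$, where Cauchy (radius $\tfrac{1}{2K}$) yields only $|f'|\leq 2AK$; this still gives a displacement $\leq 1/L$, hence $f(z)\in\barR_L$ exactly as in the paper, so the conclusion is unaffected.
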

Recall that, since both $\tilde\beta$--names and $\tilde\gamma$--names
have length $\calO(n+k)$ linear in $n$ with constant parameter $k$,
second-order polynomials here boil down to ordinary polynomials
in $n+k$ (Observation~\ref{o:SecondVsFirst}a+b+c).


\begin{proof}[Theorem~\ref{t:Analytic}]
\begin{description}
\item[a)]
Since $f$ is complex analytic on some $U\supseteq[0;1]$
open in $\IC$ there exists an $L\in\IN$ as required
for a $\tilde\beta$--name. The continuous
$|f|$ is bounded on compact $\barR_L$ by some $B$.
\item[$\tilde\beta\reduceP\tilde\gamma$:]
The $\dyrho$--names as part of the desired $\tilde\gamma$--name
are already contained in the $\tilde\beta$--name.
Now observe that Cauchy's differentiation formula
(\ref{e:Cauchy}) 
implies $|f^{(j)}(x)|/j!\leq B\cdot L^j$
for all $j\in\IN$ and all $x\in[0;1]$.
Hence $A:=B$ and $K:=L$ are suitable choices.
\item[$\tilde\gamma\reduceP\alpha$:]
Given $A,K$ set 
$L_m:=K$ and $A_m:=A$ and $M:=4K+1$ and $x_m:=(m-1)/(4K)$, 
noting that 
$(M,L_1,\ldots,L_M,A_1,\ldots,A_M,x_1,\ldots,x_M)$ can be
computed within second-order polytime:
with $A_m$ encoded in binary and $L_m$ unary,
the output has length $\calO\big(M\cdot(K+\log A)\big)$ 
compared to the unary encoding length
$\Omega(K+\log A)$ of the input. We claim that also
the sequences $a_{m,j}=f^{(j)}(x_m)/j!$
can be obtained within time a
second-order polynomial in the size of the
given $\tilde\gamma$--name.
In order to apply Theorem~\ref{t:PowerSeries}g)
observe that, since $|a_{m,j}|\leq A\cdot K^j$,
the translated and scaled function
$\hat f_m(z):=f\big(x_m+\tfrac{z}{2K}\big)$ 
can be evaluated efficiently on $\ID$ 
using the $\tilde\varphi$--name of $f$; and 
is analytic on $\ball(0,2)$ with Taylor coefficients 
$\hat a_{m,j}:=\hat f_m^{(j)}(0)/j!=f^{(j)}(x_m)/(2K)^j/j!$
bounded by $A/2^j$. 
Hence $\big((\hat a_{m,j})_{_j},(\hat K:=1),A\big)$ 
constitutes an $\alpha$--name of $\hat f_m$;
from which $a_{m,j}=\hat a_{m,j}\cdot(2K)^j$ can be recovered.
\item[$\alpha\reducep\tilde\beta$:]
Let $\big(M,(x_m),(a_{m,j}),(L_m),(A_m)\big)$ 
denote an $\alpha$--name of $f$.
From $|a_{m,j}|\leq A_m\cdot L_m^j$ observe
that $f$ is analytic on 
$\bigcup_m\ball(x_m,\tfrac{1}{L_m})$;
which constitutes an open neighbourhood of
$\barR_{L}$ for $L:=2\cdot\max_m L_m$ because of
$[0;1]\subseteq\bigcup_{m=1}^M \big[x_m-\tfrac{1}{4L_m},x_m+\tfrac{1}{4L_m}\big]$;
and $f$ is bounded on $\barR_L$ 
by $\max\big\{|\sum_j a_{m,j} (z-x_m)^j|:m\leq M, z\in\ball(x_m,\tfrac{1}{4L_m}\big\}
\leq\max_{m}\sum_j A_m L_m^j \cdot \tfrac{1}{4L_m}^j=\tfrac{4}{3}\max_m A_m$.
Both $L$ and $B\geq\tfrac{4}{3}\max_mA_m$ can easily be calculated 
in time polynomial in the parameter $k=\Theta\big(\sum_m (L_m+\log A_m)\big)$. 
It thus remains to prove $\alpha\reducep\dyrho$
as follows: \\
Given $x\in[0;1]$, some (of at least two) 
$m$ with $x\in J_m:=\big[x_m-\tfrac{1}{2L_m},x_m+\tfrac{1}{2L_m}\big]$ 
can be found within time polynomial in $\sum_m\log L_m$:
due to the factor-two overlap, it suffices to know
$x$ up to error $\tfrac{1}{4L_m}$. 
Finally observe that $\hat f_m(z):=f\big(x_m+\tfrac{z}{2L_m}\big)$ 
is analytic on $\ball(0,2)$ with coefficients  
$\hat a_{m,j}=f_m^{(j)}(0)/j!=a_{m,j}/(2L_m)^j$
satisfying $|\hat a_{m,j}|\leq A_m/2^j$; 
hence can be evaluated on $\cball(0,1)$ 
by Theorem~\ref{t:PowerSeries}a)
within time polynomial in $K+\log A_m+n$ for $K:=1$.
This provides for the evaluation 
within time polynomial in $n$ and $k$ of 
$J_m\ni x\mapsto\hat f_m\big((x-x_m)\cdot 2L_m\big)=f(x)$,
i.e. of $f|_{[0;1]}$.
\item[b)]
Note that in view of a) we may freely choose among representations
$\alpha,\tilde\beta,\tilde\gamma$ output
and even any combination of them for input.
\item[i)]
Evaluation is provided by the $\dyrho$--information
contained in a $\tilde\gamma$--name of $f$
together with the Lipschitz bound
$|f'(x)|\leq AK$ of binary length
polynomial in $k=\Theta(K+\log A)$.
\item[ii)]
Given $\dyrho$--names of $f_1,f_2$ 
and $L_1,L_2$ in unary and binary $B_1,B_2$
according to $\tilde\beta$,
output a $\dyrho$--name of $f:=f_1+f_2$
(Example~\ref{x:Param3}a)
and $L:=\max(L_1,L_2)$ and $B=B_1+B_2$.
\item[iii)] Similarly with $f:=f_1\cdot f_2$
and $L:=\max(L_1,L_2)$ and $B:=B_1\cdot B_2$.
\item[iv)]
Given an $\alpha$--name
$\big(M,(x_m),(a_{m,j}),(L_m),(A_m)\big)$ of $f$ and in case $d=1$,
(the proof of) Theorem~\ref{t:PowerSeries}d) yields $A_m'$ 
such that the coefficients 
$\hat a'_{m,j}=(j+1)\cdot\hat a_{m,j+1}=(j+1)\cdot a_{m,j+1}/(2L_m)^{j+1}$ 
of $\hat f'_m(z)=\tfrac{d}{dz}f\big(x_m+\tfrac{z}{2L_m}\big)$
satisfy $|\hat a'_{m,j}|\leq A_m'/\sqrt{2}^j$.
Note that the bounds 
$|a'_{m,j}|\leq A_m'\cdot L'^j_m$ on
$a'_{m,j}=\hat a'_{m,j}\cdot(2L_m)^j$ thus obtained
apply to $L'_m=\sqrt{2}L_m$ and hence may fail the
covering property of Equation~(\ref{e:Cover}).
Nevertheless they do support efficient evaluation
of $\hat f'_m$ on $\cball(0,1)$ 
by Theorem~\ref{t:PowerSeries}a), but now with $K:=2$.
Given $x\in[0;1]$,
similarly to the above proof of 
``$\alpha\reducep\tilde\beta$''
some $J_m$ containing $x$
can be found efficiently and used to calculate
$f'(x)=\hat f'_m\big((x-x_m)\cdot 2L_m\big)\cdot2L_m$.
This provides for a $\dyrho$--name of $f'$; 
as part of a $\tilde\gamma$--name to output.
And for given $(K,A)$ satisfying $|f^{(j)}(x)|\leq A\cdot K^j\cdot j!$,
Lemma~\ref{l:Derivative}a) implies
$|(f')^{(j)}(x)|=|f^{(j+1)}(x)|\leq AK\cdot(j+1)\cdot K^j\cdot j!
\leq AK\cdot (2K)^j\cdot j!$ since $\tfrac{1}{e\ln2}\leq1$;
hence $(AK,2K)$ yields the rest of a $\tilde\gamma$--output.
In case $d>1$, similarly,
$|f^{(d+j)}(x)|\leq AK^d\cdot(j+1)\cdots(j+d)\cdot K^j\cdot j!
\leq A\cdot(dK)^d\cdot(2K)^j\cdot j!$ according to
Lemma~\ref{l:Derivative}b); hence output
$\big(A\cdot(dK)^d,2K\big)$ of binary length 
polynomial in $d$ (sic!) and $K+\log A$.
\item[vi)]
Given $0\leq u\leq v\leq 1$ and an $\alpha$--name 
$\big(M,(x_m),(a_{m,j}),(L_m),(A_m)\big)$ of $f$,
the idea is to partition $[u;v]$ into $K\leq \calO(m)$ 
sub-intervals $I_k=[u_k;v_k]$ each lying completely within 
some $J_m$ and then apply Theorem~\ref{t:PowerSeries}f)
to $\hat f_m(z):=f\big(x_m+\tfrac{z}{2L_m}\big)$
in order to obtain $y_k:=\max\{f(x):x\in I_k\}$
and finally 
$\max\{f(x):u\leq x\leq v\}=\max\{y_k:k\leq K\}$.
\\
It follows from the proof of $\tilde\gamma\reduceP\alpha$
that we may w.l.o.g. suppose 
$L_m\equiv L$ and $(x_m)$ increasing with equidistance $\leq\tfrac{1}{4L}$.
So find some $1\leq m\leq m'\leq M$ with
$u\in J_m$ and $v\in J_{m'}$.
Then $u_1:=u$ and $v_1:=\min\big(v,x_m+\tfrac{1}{2L}\big)$
yield the first interval;
the next are given by $u_{k+1}:=v_{k}$
and $v_{k+1}:=x_{m+k}+\tfrac{1}{2L}\geq u_{k+1}$ ($1\leq k\leq m'-m-1$);
and the last by $u_{m'-m}:=v_{m'-m-1}$ and
$v_{m'-m}:=\max\big(u,x_{m'}-\tfrac{1}{2L}\big)$.
\item[v)]
Similarly to vi) but now with
$y_k=\int_{u_k}^{v_k} f(x)\,dx=F(v_k)-F(u_k)$ 
a local anti-derivative of $f$
according to Theorem~\ref{t:PowerSeries}e)
and $\int_u^v f(x)\,dx =\sum_ky_k$.
\item[vii)]
Looking at $(\tilde\beta\times\tilde\beta,\tilde\beta)$--computability,
in view of Example~\ref{x:Param3}b) it remains
to obtain, given integers $(L,B)$ with $|g(z)|\leq B$ on $\barR_L$
and $A,K$ with $|f(z)|\leq A$ on $\barR_K$,
similar quantities for $g\circ f$.
To this end employ Cauchy's differentiation formula
(\ref{e:Cauchy}) to deduce
$|f'(z)|\leq A\cdot(2K)$ on $\barR_{2K}$.
Now the \textsf{Mean Value Theorem} (in $\Re z$ and $\Im z$ 
considered as two real variables!) implies
$|f(x)-f(x+z)|\leq 2AK\cdot|z|\leq 1/L$ 
for $x\in[0;1]$ and $z\in\barR_{2AKL}$. 
Together with the hypothesis of $f$ mapping $[0;1]$ to $[0;1]$,
this shows $f$ to map $\barR_{2AKL}$ to $\barR_L\subseteq\dom(g)$.
Therefore $g\circ f$ is analytic on some open neighbourhood of
$\barR_{2AKL}$ and thereon bounded (like $g$ itself) by $B$. 
\\
Concerning $(\tilde\gamma\times\tilde\gamma,\tilde\gamma)$--computability,
the proof of \mycite{Proposition~1.4.2}{RealAnalytic} using the formula
of \person{Fa\'{a} di Bruno} (cmp. below Fact~\ref{f:Lombardi}f) shows that 
$|f^{(j)}|\leq A\cdot K^j\cdot j!$ and
$|g^{(j)}|\leq B\cdot L^j\cdot j!$ imply
$|(g\circ f)^{(j)}|\leq C\cdot M^j\cdot j!$ where
$C:=\frac{A\cdot B\cdot L}{1+A\cdot L}$ 
and $M:=K\cdot(1+A\cdot L)$.
\qed\end{description}\end{proof}

\subsection{Representing, and Operating on, Entire Functions} \lab{s:Entire}
As has been kindly pointed out by \person{Torben Hagerup}, 
the above Theorems~\ref{t:PowerSeries} and \ref{t:Analytic}
both do not capture the important case of the exponential function 
on entire $\IR$ (Example~\ref{x:Param2}a) or $\IC$. 

\begin{mydefinition} \lab{d:Entire}
On the space 
$\Comega_\infty=\big\{\bar a\subseteq\IC, R(\bar a)=\infty\big\}$,
a $\entire$--name $\psi$ of $\bar a=(a_j)_{_j}\in\Comega_\infty$ 
is a mapping $\psi:\{0,1\}^*\to\{0,1\}^*$ of the form
\[ \sdone^n \;\mapsto \; \sigma_n\,\bin\big(B(n)\big) \] 
where $\bar\sigma$ is a $(\rhody^2)^\omega$--name of $\bar a$
and $B:\IN\to\IN$ a function satisfying 
\begin{equation} \label{e:Entire}
\forall M\in\IN \;\forall j\in\IN: \quad |a_j|\;\leq\; B(M)/M^j 
\enspace .
\end{equation}
\end{mydefinition}
Note that this second-order representation has names of super-linear length,
that is, employing the full power of second-order size parameters.
Indeed, every mapping $B:\IN\to\IN$ gives rise
to an entire function with Taylor coefficients
$a_j:=\inf_M B(M)/M^j$.

\begin{theorem} \lab{t:Entire}
\begin{enumerate}
\item[a)]
$\entire$ is a second-order representation,
and it holds $\entire\big|^{\Comega_1}\reduceP\pi$
\item[b\,i)]
Evaluation 
\[ \Comega_\infty\times\IC\;\ni\; (\bar a,z) \;\mapsto\;
\sum\nolimits_j a_j z^j \;\in\;\IC \]
is $\big(\entire,(\rhody^2,|\rhody^2|),\rhody^2\big)$--computable
in second-order polytime.
\item[b\,ii+iii)]
Addition and multiplication (i.e. convolution) on $\Comega_\infty$
are $\big(\entire\times\entire,\entire\big)$--computable
in second-order polytime.
\item[b\,iv+v)]
Iterated anti-/differentiation $\Comega_\infty\times\IN\;\ni\;(f,d)\mapsto f^{(\mp d)}$ 
is $\big(\entire+\unary(d),\entire\big)$--computable in second-order polytime.
\item[b\,vi)]
Parametric maximization 
according to Theorem~\ref{t:PowerSeries}f)
of entire functions on real line segments
(that is on $\Comega_\infty\times\IR^2$)
is $\big(\entire,(\rhody^d,|\rhody^2|),\rhody\big)$--computable
in second-order polytime.
\item[b\,vii)] 
Composition $\Comega_\infty\times\Comega_\infty\ni (f,g)\mapsto g\circ f\in\Comega_\infty$
is fixed-parameter $\big(\entire\times\entire,\entire\big)$--computable. 
More precisely, given $\bar a$ and $B=B(M)$ with 
$|a_j|\leq B(M)/M^j$ and $f(z):=\sum_j a_j z^j$
as well as $\bar b$ and $A=A(N)$ with 
$|b_i|\leq A(N)/N^i$ and $g(w):=\sum_i b_i w^i$,
$c_k:=\big(g\circ f\big)^{(k)}(0)/k!$ has
$|c_k|\leq 2A\big(4 B(2M)\big)/M^k$.
Note that converting $N:=4B(2M)$ from binary to
unary for invoking $A(N)$ is the (only) step incurring
exponential behaviour.
\end{enumerate}
\end{theorem}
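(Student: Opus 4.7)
For (a), I would first verify surjectivity of $\entire$ onto $\Comega_\infty$: for any entire $f$ with Taylor coefficients $(a_j)$, the hypothesis $R=\infty$ gives $M^j|a_j|\to 0$ for every fixed $M$, so $B(M):=\lceil\sup_j M^j|a_j|\rceil$ is finite and witnesses Equation~(\ref{e:Entire}); length-monotonicity is arranged by the usual padding. The reduction $\entire|^{\Comega_1}\reduceP\pi$ is almost trivial: on input $\psi$, query $\psi$ once at $\sdone^2$ to extract $B(2)$ and output a $\pi$-name with $K:=1$ (so $r=2$) and $A:=B(2)$; the underlying $(\rhody^2)^\omega$-name of $\bar a$ is already contained in the $\sigma_n$-components of $\psi$ and is simply forwarded.

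For evaluation (b\,i), given $z$ together with an integer upper bound $V\geq|z|$ from the $|\rhody^2|$-parameter, I would set $M:=2V+2$ and query $\psi$ at $\sdone^M$ to extract $B(M)$. The Hadamard bound (\ref{e:Entire}) yields a geometrically convergent tail
\[\Big|\sum\nolimits_{j\geq N} a_j z^j\Big|\;\leq\; B(M)\cdot\sum\nolimits_{j\geq N}(V/M)^j\;\leq\;2B(M)\cdot 2^{-N},\]
so $N:=n+2+\lceil\log B(M)\rceil$ terms suffice and the partial sum is computed in time second-order polynomial in $n$, $V$, and $|\psi|$. For (b\,ii), addition with $C(M):=A(M)+B(M)$ is immediate. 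For the convolution in (b\,iii), expand with parameter $2M$: $|c_j|\leq(j+1)A(2M)B(2M)/(2M)^j$, and since $\max_{j\geq 0}(j+1)/2^j=1$, one may take $C(M):=A(2M)\cdot B(2M)$. For $d$-fold differentiation (b\,iv), $|c_j|\leq(j+d)^d |a_{j+d}|\leq (j+d)^d A(2M)/(2M)^{j+d}$, and Lemma~\ref{l:Derivative}b) with $r=2$, $s=d$ gives the bound $C(M):=\lceil(d/(e\ln 2))^d A(2M)/M^d\rceil$; the $d$-dependence is polynomial because $d$ is supplied in unary. For $d$-fold anti-differentiation (b\,v), each integration introduces a factor $1/j$ absorbed by a factor $M$, so $C(M):=M^d A(M)$ works. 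Parametric maximization (b\,vi) proceeds as in Theorem~\ref{t:PowerSeries}f): truncate to a dyadic polynomial approximation of sufficiently large degree $\calO(n+V+\log B(2V+2))$ and optimize via the quantifier-elimination formula $\Phi$ over the first-order theory of the reals.

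The main obstacle, and where the second-order size parameter becomes essential, is composition (b\,vii). The analytic core is a double geometric estimate: for $|z|\leq M$ the Hadamard bound on $f$ gives $|f(z)|\leq\sum_j A(2M)/(2M)^j\cdot M^j=2B(2M)$, and then for $|w|\leq 2B(2M)$ the same argument applied to $g$ with $N':=4B(2M)$ yields $|g(w)|\leq 2A(4B(2M))$. Hence $\sup_{|z|\leq M}|(g\circ f)(z)|\leq 2A(4B(2M))$, and Cauchy's formula (\ref{e:Cauchy}) applied on the circle $|z|=M$ delivers the claimed $|c_k|\leq 2A(4B(2M))/M^k$, i.e.\ $C(M):=2A(4B(2M))$. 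The coefficients $c_k$ themselves are computed from $(a_j),(b_i)$ via the formula of Fa\`a di Bruno as in the proof of Theorem~\ref{t:Analytic}c). The genuine obstacle is purely one of encoding: querying $A$ at $N:=4B(2M)$ requires presenting $N$ in unary on the query tape, whereas $N$ is naturally produced in binary of length $\calO(|\psi_f|(2M))$. This binary-to-unary blow-up is exactly what separates fully polytime from fixed-parameter computability here, and it is unavoidable given the querying convention that defines $\entire$.
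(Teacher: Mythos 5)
Your proposal is correct and follows essentially the same route as the paper's proof: the reduction via $K:=1$, $A:=B(2)$; evaluation with $M\geq 2|z|$ and a geometric tail estimate; the coefficient bounds for sum, product and (anti-)derivatives via Lemma~\ref{l:Derivative}; quantifier elimination on a truncation for maximization; and, for composition, the double geometric estimate $|f(z)|\leq 2B(2M)$ on $|z|\leq M$, then $|g(w)|\leq 2A(4B(2M))$, followed by Cauchy's differentiation formula, together with the binary-to-unary conversion of $N=4B(2M)$ as the sole source of the fixed-parameter (rather than fully polytime) behaviour. The only blemish is a typo in (b\,vii), where your display bounds $|f(z)|$ by $\sum_j A(2M)\,M^j/(2M)^j$ instead of $\sum_j B(2M)\,M^j/(2M)^j$, although the stated conclusion $2B(2M)$ is the correct one.
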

%
In the case of the exponential function with $a_j=1/j!$,
in order to bound
$M^j/j!=\tfrac{M}{j}\cdot\tfrac{M}{j-1}\cdots\tfrac{M}{1}$
independent of $j$, it suffices to treat the case $j=M$
since additional factors with $j>M$ only decrease the product.
Now according to \textsf{Stirling's Approximation},
$B(M):=M^M/M!\leq\calO\big(\exp(M)\big)$ grows singly exponentially,
i.e. has binary length (coinciding with $|\psi|(M)-1$)
linear in the value (!) of $M$. 
Example~\ref{x:Param2}a) therefore indeed constitutes a special
case of Theorem~\ref{t:Entire}b\,i). 
%
\begin{proof}[Theorem~\ref{t:Entire}]
\begin{enumerate}
\item[a)]
Since an entire function is holomorphic on every disc $\ball(0,M+1)$,
Equation~\ref{e:Hadamard} applies to every $r:=M$; hence
$B()$ exists. Concerning the reduction,
it suffices to take $K:=1$ and $A:=B(2)$ 
in the $\pi$--name.
\item[b\,i)]
Similar to the proof of Theorem~\ref{t:PowerSeries}a),
evaluate the polynomial $\sum_{j<N} a_jz^j$ with 
$N:=n+1+\log A$ terms, where $A:=B(M)$ and $M\geq2|z|$:
clearly feasible within time polynomial in $n+|z|$ and $B()$,
that is the (second-order) length 
of the given $\entire$--name $\psi$ of $\bar a$.
\item[b\,ii)]
For addition transform $(a_j)$ and $(b_j)$ into
$(a_j+b_j)$ as well as 
$M\mapsto B(M)$ and $M\mapsto A(M)$ 
into $M\mapsto B(M)+A(M)$.
\item[b\,iii)]
$c_j:=\sum_{i=0}^j a_i\cdot b_{j-i}$ 
satisfies $|c_j|\leq\sum_i B(2M)/(2M)^i\cdot A(2M)/(2M)^{j-i}
=(j+1)\cdot B(2M)\cdot A(2M)/(2M)^j\leq C/M^j$ for all $j$,
where $C$ is some integer 
$\geq A(2M)\cdot B(2M)\cdot\tfrac{2}{e\ln 2}$
since, according to Lemma~\ref{l:Derivative}a),
$j+1\leq\tfrac{1}{e\ln 2}\cdot2^{j+1}$.
\item[b\,iv)]
Similarly, according to Lemma~\ref{l:Derivative}b),
$\big|(j+1)\cdot(j+2)\cdots(j+d)\cdot a_{j+d}\big|
\leq (j+d)^d\cdot B(2M)/(2M)^{j+d}
\leq B'(M)/M^j$ for
$\IN\ni B'(M)\geq B(2M)\cdot\big(\tfrac{2d}{e\ln 2}\big)^d/(2M)^d$.
\item[b\,v)]
Observe 
$|\tfrac{1}{j}\cdot\tfrac{1}{j-1}\cdots\tfrac{1}{j-d+1}\cdot a_{j-d+1}|
\leq B(M)/M^{j-d+1}$.
\item[b\,vi)]
As in b\,i) and the proof of Theorem~\ref{t:PowerSeries}f),
use real quantifier elimination to
approximate the maximum of the polynomial
consisting of the first $n+2+\log B(M)$ terms of $f$'s Taylor expansion.
\item[b\,vii)]
Instead of explicitly expanding 
$\sum_i b_i\big(\sum_j a_j z^j\big)^i$,
note similarly to Lemma~\ref{l:Derivative}e) that
$w:=f(z)$ has $|w|\leq 2\cdot B(2M)$
for $|z|\leq M$; and analogously 
$|g(w)|\leq 2A\big(4 B(2M)\big)=:C(M)$ 
for $|w|\leq 2B(2M)$.
Then Cauchy's Differentiation Formula (\ref{e:Cauchy})
yields the claim.
\qed\end{enumerate}\end{proof}

\section[Uniform Complexity on Gevrey's Scale from Real Analytic to Smooth Functions]{Complexity on Gevrey's Scale from Real Analytic to Smooth Functions} 
\lab{s:Gevrey}
This section explores in more detail the complexity-theoretic `jump' of
the operators of maximization and integration from smooth ($\calNP$--hard: Fact~\ref{f:Nonunif}) 
to analytic (polytime: Example~\ref{x:Upper}) functions. More precisely we present
a uniform\footnote{Referring to \mycite{D\'{e}finition~2.2.9}{Lombardi},
\mycite{Corollaire~5.2.14}{Lombardi} establishes `sequentially uniform'
polytime computability of the operators in the sense of mapping
every polytime sequence of functions to a polytime sequence.
This may be viewed as a complexity theoretic counterpart to
Banach--Mazur computability, cmp. e.g. \mycite{\S9.1}{Weihrauch}.}
refinement of \mycite{\S5.2}{Lombardi} asserting these operators to
map polytime to polytime functions on a class\footnote{We thank 
\person{Matthias Schr\"{o}der} for directing us to this class and to the publication \cite{Lombardi}.}  
much larger than $\Cinfty$ (and even than the \emph{quasi-}analytic functions)
which, historically, arose from the study 
of the regularity of solutions to partial differential equations \cite{Gevrey}:

\begin{mydefinition} \lab{d:Lombardi}
\begin{enumerate}
\item[a)]
Write $\Gevrey_{\ell,A,K}[-1;1]$ for the subclass
of functions $f\in\Cinfty[-1;1]$ satisfying
\begin{equation} \label{e:Gevrey1}
\forall |x|\leq1,\; \forall j\in\IN: \qquad 
|f^{(j)}(x)|\;\leq\; A\cdot K^j\cdot j^{j\ell}  \enspace ;
\end{equation}
$\Gevrey_\ell[-1;1]:=\bigcup_{A,K\geq1}\Gevrey_{\ell,A,K}[-1;1]$
and $\Gevrey[-1;1]:=\bigcup_{\ell\in\IN}\Gevrey_\ell[-1;1]$. 
\item[b)]
Let $\tilde\lambda$ denote the following 
second-order representation of $\Gevrey[-1;1]$: \\
A mapping $\psi$ is a $\tilde\lambda$--name of $f\in\Gevrey_{\ell}[-1;1]$ if
there exists $C\in\IN$ such that, for every $n\in\IN$,
$\psi(\sdone^n)$ is (the binary encoding of) an
$m$--tuple $(a_0,\ldots,a_{m-1})\in\ID_m$, 
$m=C\cdot n^{\ell}$,
such that $\hat g_m(x):=a_0+a_1 x+\cdots+a_{m-1}x^{m-1}$
has $\|f-\hat g_m\|\leq2^{-n}$.
\item[c)]
Let a $\tilde\gamma$--name of $f\in\Gevrey_{\ell}[-1;1]$ be a mapping
\[ \{\sdzero,\sdone\}^*\;\ni\;\vec w
\;\mapsto\;\sdone^{\log A+K+|\vec w|^\ell}\,\sdzero\,
\psi(\vec w)\;\in\{\sdzero,\sdone\}^* \enspace ,\]
where $\psi$ denotes a $\dyrho$--name of $f$
satisfying Equation~(\ref{e:Gevrey1}). We regard $\tilde\gamma$ as 
second-order representation with unary advice parameter $K+\log A$ 
in the sense of Definition~\ref{d:dualuse}c).
\end{enumerate}
\end{mydefinition}
For $\ell=1$, Definition~\ref{d:Lombardi}a) is equivalent to 
Equation~(\ref{e:Hadamard1}) by virtue of Stirling, i.e.,
$\Comega[-1;1]=\Gevrey_1[-1;1]$ holds.

\begin{myexample} \lab{x:Gevrey}
\begin{enumerate}
\item[a)]
The standard example $h(x)=\exp(-1/|x|)$ of a smooth but 
non-analytic function belongs to $\Gevrey_3[-1;1]$.
\item[b)]
The function $h_L$ from Example~\ref{x:maxNP}
is smooth but not in $\Gevrey[-1;1]$.
\item[c)]
Extending Example~\ref{x:Param3}f),
fix $\ell,N\in\IN$ and let
$g_{\ell,N,k}(x):=g_1\big(N^\ell\cdot x-k\big)/e^{1+N\cdot\ell/e}$
with $g_1(x)=\exp(-x^2)$. Then $g_{\ell,N,k}\in\Gevrey_{\ell+1,1,1}[-1,1]$.
\end{enumerate}
\end{myexample}
\begin{proof}
\begin{enumerate}
\item[a)]
Similarly to the proof of Example~\ref{x:Poles}b),
write $h^{(n)}(x)=x^{-2n}\cdot h(x)\cdot p_n(x)$.
So $p_0=1$ and, for $n\geq1$, $p_{n}(x)=x^2\cdot p'_{n-1}(x)+\big(1-2(n-1)x\big)\cdot p_{n-1}(x)$
shows $p_{n}$ to be an integer polynomial of $\deg(p_n)=n-1$
with leading coefficient equal to $(-1)^{n+1}\cdot n!$ 
and each coefficient bounded by $n!$.
In particular $\|p_n\|\leq \calO(n)^n$,
while $(0;1]\ni x\mapsto x^{-2n}\cdot g(x)$
attains its maximum at $x=\tfrac{1}{2n}$ of value
$\calO(n)^{2n}$: hence $\|h^{(n)}\|\leq\calO(n)^{3n}$.
\item[b)]
Recall that $h_L=\sum_{N\in L}h_N$,
where the $h_{N}(x)=h(2N^2\cdot x-2N)/N^{\ln N}$
have pairwise disjoint supports.
Therefore $\|h^{(j)}_{L}(x)\| 
= \sup_{N\in L} h^{(j)}_N =
\sup_{N\in L} \|h_1^{(j)}\|\cdot 2N^{2j}/N^{\ln N}$,
where $\sup_{N\in L} N^{2j}/N^{\ln N}\geq 
(2^j)^{2j}/(2^j)^{j\ln 2}\geq 2^{j^2}$.
\item[c)]
Note $\tfrac{d^j}{dx^j} g_{\ell,N,k}(x)=N^{\ell j}/\exp(1+N\ell/e)
\cdot g_1^{(j)}\big(N^\ell\cdot x-k\big)$
with $|g_1^{(j)}(y)|\leq j!\cdot e\leq j^j\cdot e$
according to Example~\ref{x:Poles}b);
and $0=\tfrac{d}{dN}\ln \big(N^{\ell j}/\exp(N\ell/e)\big)=
\tfrac{d}{dN} \ell j\ln N-N\ell/e=\ell j/N-\ell/e$ 
shows by monotonicity of $\ln$ that 
$\sup_N N^{\ell j}/\exp(N\ell/e)$ is attained
at $N=je$ of value $j^{\ell j}$.
\qed\end{enumerate}\end{proof}
As did Fact~\ref{f:Nonunif} for $\Comega$,
\mycite{Corollaire~5.2.14}{Lombardi} asserts
$\Max$ and $\int$ and $\partial$ to map polytime functions 
in $\Gevrey[-1;1]$ to polytime ones -- again nonuniformly, 
that is
for fixed $f$ and in particularly presuming $\ell,A,K$ 
according to Definition~\ref{d:Lombardi}a) to be known.
The representation $\tilde\gamma$ from Definition~\ref{d:Lombardi}c) 
on the other hand explicitly provide the values of these quantities.
More precisely by `artificially' padding names to length
$\log A+K+n^\ell$, instances $f$ with large values of these parameters 
are allotted more time to operate on in the second-order setting.
Our main result (Theorem~\ref{t:Gevrey}) will show these
parameters to indeed characterize the uniform computational complexity
of maximization in terms of Gevrey's scale of smoothness.
Nonuniformly, we record

\begin{myremark} \lab{r:Eike}
For fixed $\ell,A,K\in\IN$, a function $f\in\Gevrey_{\ell,A,K}$
is polytime $(\rhody,\rhody)$--computable iff it has a
polynomial-time computable $\tilde\gamma$--name $\psi$.
\end{myremark}
Before further justifying Definition~\ref{d:Lombardi}b)+c) 
--- including the re-use of $\tilde\gamma$ ---
let us recall some facts from Approximation Theory 
heavily used also in \cite{Lombardi,Lester}:

\begin{fact} \lab{f:Lombardi}
For a ring $R$, abbreviate with
$R[X]_m$ the $R$--module of all univariate 
polynomials over $R$ of degree $<m$.
Let $\Cheby_m\in\IZ[X]_m$ denote the $m$-th Chebyshev polynomial of the first kind,
given by the recursion formula $\Cheby_0\equiv1$,
$\Cheby_1(x)=x$, and $\Cheby_{m+1}(x)=2x\Cheby_m(x)-\Cheby_{m-1}(x)$.
\begin{enumerate}
\item[a)]
For $g\in\IR[X]_{m+1}$ it holds $\|g'\|\leq m^2\cdot\|g\|$ and
$\|g^{(k+1)}\|\leq \frac{m^2\cdot(m^2-1^2)\cdot(m^2-2^2)\cdots(m^2-k^2)}{
1\cdot 3\cdot 5\cdots (2k+1)}\cdot\|g\|$. 
\item[b)]
With respect to the scalar product 
\[ \langle f,g\rangle \;:=\; \int\nolimits_{-1}^1 f(x)\cdot g(x)\cdot(1-x^2)^{-1/2}\,dx 
\;=\; \int\nolimits_0^\pi f(\cos t)\cdot g(\cos t)\,dt \]
on $C[-1;1]$, the family $\calT=(T_m)_{_m}$
of Chebyshev polynomials forms an orthogonal system, namely
satisfying $\langle\Cheby_0,\Cheby_0\rangle=\pi$ 
and $\langle\Cheby_m,\Cheby_m\rangle=\pi/2$
and $\langle\Cheby_m,\Cheby_n\rangle=0$ for $0\leq n<m$.
The orthogonal projection (w.r.t. this scalar product) 
$\OCHEBY{m}(f)$ of $f\in C[-1;1]$ onto $\IR[X]_m$ is given by
\[ \OCHEBY{m}(f):=c_{f,0}/2+\sum\nolimits_{k=1}^{m-1} c_{k}(f)\cdot \Cheby_k, \qquad
c_{k}(f):=\tfrac{2}{\pi}\cdot\langle f,\Cheby_k\rangle \]
Moreover $\|f-\OCHEBY{m}(f)\|\leq\sum_{k\geq m} |c_k(f)|$ and
$|c_{m}(f)|\leq \tfrac{4}{\pi}\cdot\|f-g\|$ for every $g\in\IR[X]_m$.
\item[c)]
The unique polynomial $\ICHEBY{m}(f)\in\IR[X]_m$ 
interpolating $f\in C[-1;1]$ at the Chebyshev Nodes
$x_{m,j}:=\cos\big(\tfrac{\pi}{2}\tfrac{2j+1}{m}\big)$, $0\leq j<m$,
is given by
\[ \ICHEBY{m}(f):=\sum\limits_{k=0}^{m-1} y_{m,k}(f)\cdot \Cheby_k, \quad
y_{m,k}(f):=\tfrac{1}{m}\cdot f(x_{m,0})\cdot \Cheby_k(x_{m,0})+
\tfrac{2}{m}\cdot\sum\limits_{j=1}^{m-1} 
f(x_{m,j})\cdot \Cheby_k(x_{m,j}) \]
and `close' to the best polynomial approximation in the following sense:
\[ \forall g\in\IR_m: \quad \|f-\ICHEBY{m}(f)\| \;\leq\; 
\big(2+\tfrac{2}{\pi}\log m\big)\cdot\|f-g\| \enspace . \]
\item[d)]
To $f\in C^k[-1;1]$ and $k<m\in\IN$ there exists $g\in\IR[X]_m$ such that 
\begin{equation} \label{e:Jackson}
\|f-g\| \;\leq\; (\tfrac{\pi}{2})^k\cdot 
\frac{\|f^{(k)}\|}{m\cdot(m-1)\cdots(m-k+1)} \enspace . 
\end{equation}
\item[e)]
If $g_m$ converges pointwise to $f$, and if all the $g_m$ are differentiable,
and if the derivatives $g'_m$ converge uniformly to $g$, 
then $f$ is differentiable and $f'=g$.
\item[f)]
The \textsf{Formula of Fa\`{a} di Bruno} expresses
higher derivatives of function composition:
\[
(g\circ f)^{(n)}(t) \;=\hspace*{-4ex}
\sum_{\substack{k_1,\ldots,k_n\in\IN_0 \\ k_1+2k_2+\cdots+nk_n=n}} \hspace*{-1ex}
\frac{n!}{k_1!\cdot k_2!\cdots k_n!}\cdot g^{(k)}\big(f(t)\big)\cdot 
\Big(\frac{f^{(1)}(t)}{1!}\Big)^{k_1}\cdot
\Big(\frac{f^{(2)}(t)}{2!}\Big)^{k_2}\cdots
\Big(\frac{f^{(n)}(t)}{n!}\Big)^{k_n}  \]
where $k:=k_1+k_2+\cdots+k_n$.
In particular
$\sum_{k_1,\ldots,k_n}
\tfrac{k!}{k_1!\cdot k_2!\cdots k_n!}\cdot R^k=R\cdot(1+R)^{n-1}$
for $n\in\IN$ and $R>0$.
\item[g)]
According to \textsf{Stirling}, 
$\sqrt{2\pi}\cdot n^{n+1/2}\cdot e^{-n}\leq n!\leq
n^{n+1/2} \cdot e^{1-n}$.
\end{enumerate}
\end{fact}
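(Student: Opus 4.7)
All seven items of Fact~\ref{f:Lombardi} are classical results of approximation theory, so the proof plan consists mainly of citing or sketching the standard derivations and keeping only those arguments whose constants are actually needed in Section~\ref{s:Gevrey}. I would treat the items independently, in the order they appear.

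\textbf{Steps (a)--(c).} For (a) the first-order Markov inequality $\|g'\|\leq m^2\|g\|$ follows by the substitution $x=\cos\theta$, which turns $g\in\IR[X]_{m+1}$ into a trigonometric polynomial of degree $m$ to which Bernstein's inequality applies; equality is attained at $g=\Cheby_m$. The iterated bound with the product $(m^2-k^2)$ is V.A.~Markov's refinement, obtained by tracking the sharp constants through iteration on the extremal polynomial $\Cheby_m$, or by citing Rivlin's monograph. For (b), the same substitution reduces orthogonality of $\Cheby_m(\cos t)=\cos(mt)$ on $[0,\pi]$ to $\int_0^\pi\cos(mt)\cos(nt)\,dt$, giving normalization $\pi$ for $m=0$ and $\pi/2$ otherwise; $\OCHEBY{m}(f)$ is the resulting Fourier partial sum; $\|f-\OCHEBY{m}(f)\|\leq\sum_{k\geq m}|c_k(f)|$ is the triangle inequality on the tail; and the coefficient bound uses $c_m(g)=0$ for $g\in\IR[X]_m$ together with $|c_m(f-g)|\leq \tfrac{2}{\pi}\|f-g\|\int_0^\pi|\cos(mt)|\,dt=\tfrac{4}{\pi}\|f-g\|$. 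For (c) one invokes the discrete orthogonality of $\Cheby_k$ at the Chebyshev nodes to read off the interpolation formula, and bounds the Lebesgue constant of Chebyshev interpolation by $2+\tfrac{2}{\pi}\log m$ (Erd\H{o}s--Gr\"unwald).

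\textbf{Steps (d), (e).} For (d), take $g=\OCHEBY{m}(f)$ and integrate the coefficient formula $c_j(f)=\tfrac{2}{\pi}\int_0^\pi f(\cos t)\cos(jt)\,dt$ by parts $k$ times, thereby reducing the tail of the Chebyshev series to an integral involving $f^{(k)}$; the constants $(\pi/2)^k$ and $1/\bigl(m(m-1)\cdots(m-k+1)\bigr)$ arise naturally from these integrations. Alternatively, cite DeVore--Lorentz or Cheney directly for the sharp form of Jackson's theorem. Step~(e) is the standard uniform-convergence-of-derivatives theorem, proved by writing $g_m(x)=g_m(a)+\int_a^x g'_m(t)\,dt$, passing to the limit on both sides, and swapping limit and integral by uniform convergence of $g'_m$.

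\textbf{Steps (f), (g).} Fa\`a di Bruno's formula is proved by induction on $n$: differentiate the inductive expression and collect terms by their multi-index pattern $(k_1,\ldots,k_n)$, with the multinomial weight $n!/(k_1!\cdots k_n!)$ arising from the number of ways to assign the $n$ differentiations. The combinatorial identity is the one piece of genuine computation: the sum runs over partitions of $n$ with part multiplicities $k_i$, and $k!/(k_1!\cdots k_n!)$ counts the compositions of $n$ realizing that partition; hence the total equals $\sum_{k=1}^n \binom{n-1}{k-1}R^k=R(1+R)^{n-1}$ by the binomial theorem (using that the number of compositions of $n$ into exactly $k$ positive parts is $\binom{n-1}{k-1}$). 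Step~(g) is Stirling's bound in Robbins' elementary form, derived by comparing $\log n!$ with $\int_1^n\log x\,dx$ and applying Euler--Maclaurin to the required order. The only real obstacle anywhere is bookkeeping of the sharp constants in (a), (c), and (d); once these are fixed in the form used by Theorem~\ref{t:Gevrey}, each individual step is textbook.
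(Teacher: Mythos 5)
Your proposal is correct and takes essentially the same route as the paper, which treats Fact~\ref{f:Lombardi} as a compendium of classical approximation-theory results and gives no proofs of its own, merely attributing a) to the Markov brothers, b) and d) to Cheney (plus Labhalla--Lombardi--Moutai), c) to Rivlin's Lebesgue-constant bound, and f) to Krantz--Parks. Your extra details (the $\tfrac{4}{\pi}$ coefficient bound via $c_m(g)=0$, the composition-counting derivation of $R(1+R)^{n-1}$, Robbins' form of Stirling) are consistent with and somewhat more explicit than the paper; the only slight gloss is that Markov's inequality does not follow from Bernstein's alone (an endpoint argument is needed), but since you also defer to the standard references this is immaterial.
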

Claim~a) are the Markov brothers' inequalities.
Claim~b) is mostly calculation plus \cite[\textsc{Theorem~4.4.5}iii]{Cheney}
and \mycite{\S5.2.2}{Lombardi}.
The nontrivial part of Claim~c) bounds the Lebesgue Constant;
cmp. \mycite{Theorem~1.2}{Rivlin}.
For d) refer, e.g., to \cite[\textsc{Jackson's Theorem~4.6.V}iii]{Cheney}.
Claim~f) can for instance be found in \mycite{\S1.3+\S1.4}{RealAnalytic}.

The following tool provides quantitative refinements to
\mycite{Th\'{e}or\`{e}me~5.2.4}{Lombardi}.

\begin{myproposition} \lab{p:Lombardi}
\begin{enumerate}
\item[a)]
For $0<r<1$ and $0<q\leq1$ and $p\geq0$ and $N\in\IN$ it holds
\[ \sum\nolimits_{n>N} r^{n^q}\cdot (n-1)^p \;\;\leq\;\;
\tfrac{1}{2q\cdot\ln1/r}\cdot
\big(2\cdot\tfrac{1-p+q}{q\cdot e\cdot \ln1/r}\big)^{(1-q+p)/{q}}
\cdot\sqrt{r}^{N^q} \]
\item[b)]
Let $0<q,r<1\leq B$. Then $B\cdot r^{m^q}\leq 2^{-n}$ holds for all
$m\geq C\cdot n^{1/q}$, 
where $C:=\big(1+\log B/\log\tfrac{1}{r}\big)^{1/q}$.  
Conversely, for $0<q<1\leq C$,
$\big(\forall m\geq C\cdot n^{1/q}:\varepsilon_m\leq 2^{-n}\Big)$
implies $\varepsilon_m\leq 2\cdot r^{m^q}$ 
with $\tfrac{1}{2}<r:=2^{-C^{-q}}<1$.
\item[c)]
Suppose $f\in\Cinfty[-1;1]$ satisfies Equation~(\ref{e:Gevrey1})
with parameters $A,K,\ell\geq1$. Then to every $m\in\IN$
there exists $g_m\in\IR[X]_m$ with
\begin{equation} \label{e:Lombardi}
\|f-g_m\|\;\leq\; B\cdot r^{m^q}
\end{equation}
where $q:=1/\ell$, $2^{-1/(2\pi)}\leq r:=2^{-(2\pi K)^{-q}}<1$, $B:=A\cdot(2+\pi K)$.
\item[d)]
Conversely suppose $0<r,q<1\leq B$ and $f:[-1;1]\to\IR$ are such 
that to every $m\in\IN$ there exists some $g_m\in\IR[X]_m$ 
satisfying Equation~(\ref{e:Lombardi}). 
Then $f$ is smooth and satisfies $\|f^{(d)}-g_m^{(d)}\|\leq B_d\cdot \sqrt{r}^{m^q}$
with $B_d:=B\cdot\big(\tfrac{6}{q\cdot e\cdot \ln1/r}\big)^{(1+2d)/q}$. 
Moreover $f$ obeys Equation~(\ref{e:Gevrey1}) with $\ell:=2/q-1$,
$A:=B\cdot\big(\tfrac{6}{q\cdot e\cdot\ln1/r})^{4/q}$,
and $K:=4\cdot\big(\tfrac{6}{q\cdot\ln1/r}\big)^{2/q}$.
\item[e)]
Suppose $f\in\Cinfty[-1;1]$ satisfies Equation~(\ref{e:Gevrey1}).
To $m\in\IN$ let $\hat g_m\in\ID_m[X]_m$ denote the polynomial 
$\ICHEBY{m}(f)$ with coefficients `rounded' to $\ID_m$.
Then it holds $\|f-\hat g_m\|\leq \hat B\cdot \sqrt{r}^{m^{1/\ell}}$
with $\hat B:=1+2B\cdot\big(\tfrac{2\ell}{e\cdot\ln1/r}\big)^{\ell}$
for $r$ and $B$ according to c).
\item[f)]
For $f\in\Gevrey_\ell[-1;1]$, $f'\in\Gevrey_\ell[-1;1]$.
More precisely, Equation~(\ref{e:Gevrey1}) implies
$\|f^{(d+j)}\|\leq A_d\cdot K_d^j\cdot j^{j\ell}$ 
for $A_d:=A\cdot L\cdot(2\cdot d^2\cdot\ell/e)^{d\ell}$ 
and $K_d:=K\cdot e\cdot (2d)^\ell$.
\item[g)]
For $f,g\in\Gevrey_\ell[-1;1]$, $f\cdot g\in\Gevrey_\ell[-1;1]$.
More precisely, 
$\|f^{(j)}\|\leq A\cdot K^j\cdot j^{j\ell}$ and
$\|g^{(j)}\|\leq B\cdot L^j\cdot j^{j\ell}$ imply
$\|(f\cdot g)^{(j)}\|\leq C\cdot M^j\cdot j^{j\ell}$ 
for $C:=A\cdot B$ and $M:=K+L$.
\item[h)]
For $f,g\in\Gevrey_\ell[-1;1]$ with $f:[-1;1]\to[-1;1]$, 
$g\circ f\in\Gevrey_\ell[-1;1]$.
More precisely, 
$\|f^{(n)}\|\leq A\cdot K^n\cdot n^{n\ell}$ and
$\|g^{(n)}\|\leq B\cdot L^n\cdot n^{n\ell}$ imply
$\|(g\circ f)^{(n)}\|\leq C\cdot M^n\cdot n^{n\ell}$ 
for $C:=B\cdot(e\cdot\ell/2)^{\ell/2}$ and 
$M:=e\cdot A\cdot K\cdot L$.
\end{enumerate}
\end{myproposition}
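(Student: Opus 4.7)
Parts~a) and~b) are tail and inversion estimates independent of the rest of the proposition. For a), split $r^{n^q}=\sqrt{r}^{n^q}\cdot\sqrt{r}^{n^q}$ and invoke Lemma~\ref{l:Derivative}b) (applied to $\sqrt{r}$ and the exponent $s=(1-q+p)/q$ so that $(n-1)^p\leq n^p$ and $n^p\cdot\sqrt{r}^{n^q}=\big((n^q)^{p/q}\cdot\sqrt{r}^{n^q}\big)$ is uniformly bounded by the claimed constant); the remaining sum $\sum_{n>N}\sqrt{r}^{n^q}$ is dominated by a geometric tail. For b), take $\log_2$: $B\cdot r^{m^q}\leq 2^{-n}$ iff $m^q\cdot\log_2(1/r)\geq n+\log_2 B$, and the choice $m\geq C\cdot n^{1/q}$ with $C^q=1+\log_2 B/\log_2(1/r)$ suffices for $n\geq 1$. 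The converse direction is essentially a change of variable $n:=\lfloor m^q/C^q\rfloor$, the rounding costing only the prefactor $2$.

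Part~c) is the direct half of Bernstein's characterization. For each $m$ I would apply Jackson's theorem (Fact~\ref{f:Lombardi}d) with parameter $k\in\{1,\ldots,m-1\}$ to get $\|f-g_m\|\leq(\pi/2)^k\cdot A K^k k^{k\ell}/\big(m(m-1)\cdots(m-k+1)\big)$, estimate $m(m-1)\cdots(m-k+1)\geq (m/2)^k$ for $k\leq m/2$, and then optimize in $k$ by setting $2\pi K k^\ell/m\approx e^{-1}$, i.e.\ $k\approx(m/(2\pi K e))^{1/\ell}$; Stirling~(Fact~\ref{f:Lombardi}g) turns the resulting $A\cdot e^{-k}$ into the stated $B\cdot r^{m^q}$ with $r=2^{-(2\pi K)^{-q}}$. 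For the converse d), the key idea is a telescoping argument: $p_m:=g_{m+1}-g_m\in\IR[X]_{m+1}$ satisfies $\|p_m\|\leq 2B\cdot r^{m^q}$, and iterated application of the Markov inequality (Fact~\ref{f:Lombardi}a) gives $\|p_m^{(d)}\|\leq\binom{2d}{d}^{-1}m^{2d}\cdot 2B\cdot r^{m^q}$ up to a combinatorial factor I would bound using Fact~\ref{f:Lombardi}g). Part~a) makes the series $\sum_m\|p_m^{(d)}\|$ summable; Fact~\ref{f:Lombardi}e) then yields $f\in\Cinfty[-1;1]$ with $\|f^{(d)}-g_m^{(d)}\|\leq B_d\cdot\sqrt{r}^{m^q}$, and taking $m\to\infty$ in $\|f^{(d)}\|\leq\|g_m^{(d)}\|+B_d\sqrt{r}^{m^q}$ combined with b) (reversed, converting $\sqrt{r}^{j^q}$--type decay into the Gevrey bound $K^j j^{j\ell}$ with $\ell=2/q-1$) produces the stated $A$, $K$. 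For e), start from the interpolant $\ICHEBY{m}(f)$, which by Fact~\ref{f:Lombardi}c) and c) satisfies $\|f-\ICHEBY{m}(f)\|\leq(2+\tfrac{2}{\pi}\log m)\cdot B\cdot r^{m^q}$; rounding each of the $m$ Chebyshev coefficients to $\ID_m$ costs at most $m\cdot 2^{-m-1}\cdot\|\Cheby_k\|_\infty\leq m\cdot 2^{-m-1}$ in the sup-norm (Fact~\ref{f:Lombardi}c), and one absorbs the $\log m$ factor and the $m\cdot 2^{-m}$ term into $\sqrt{r}^{m^{1/\ell}}$ using part~a) once more.

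Finally f), g), h) are closure properties of $\Gevrey_\ell$ under differentiation, multiplication, and composition. For f) the desired inequality reduces after shifting the index to the elementary estimate $(d+j)^{(d+j)\ell}\leq(2d)^{d\ell}(2j)^{j\ell}\cdot\max(d,j)^{d\ell+j\ell-(d+j)\ell}$; splitting according to whether $j\leq d$ or $j>d$ and bounding the $d$-dependent factors by the stated $A_d, K_d$ is routine algebra once Stirling is invoked. For g) apply Leibniz, note $\binom{j}{k}K^k L^{j-k}$ sums over $k$ to $(K+L)^j$, and use the convexity inequality $k^k(j-k)^{j-k}\leq j^j$ raised to the power $\ell$ to control $k^{k\ell}(j-k)^{(j-k)\ell}\leq j^{j\ell}$. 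The main obstacle is h): I would plug the Gevrey bounds on $f$ and $g$ into Fa\`a di Bruno's formula (Fact~\ref{f:Lombardi}f), bound the product $\prod_i(|f^{(i)}|/i!)^{k_i}$ by $\prod_i(A K^i i^{i\ell}/i!)^{k_i}$, and regroup so that the partition constraint $\sum i k_i=n$ lets one extract a factor $K^n n^{n\ell}\cdot A^k/(i!\text{ terms})$; the final partition sum collapses via the closed form $\sum_{k_1+2k_2+\cdots=n}\frac{n!}{k_1!\cdots k_n!}R^k=R(1+R)^{n-1}$ with $R:=A$ absorbed into the prefactor, so that the stated $C,M$ emerge after one more Stirling estimate on the $(i!)$ denominators. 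This last step is where keeping the polynomial base $M=e\cdot A\cdot K\cdot L$ and not a worse constant requires care.
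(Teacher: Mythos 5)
The step that fails is in your part~a). After absorbing $(n-1)^p$ into half of the exponential decay via Lemma~\ref{l:Derivative}b), you are left with $\sum_{n>N}\sqrt{r}^{\,n^q}$ and claim this is ``dominated by a geometric tail'', i.e.\ by an $N$-independent constant times its first term $\sqrt{r}^{\,N^q}$. For $0<q<1$ --- which is exactly the relevant regime, since $q=1/\ell$ --- this is false: the ratio of consecutive terms $\sqrt{r}^{\,(n+1)^q-n^q}$ tends to $1$, and comparison with $\int_N^\infty \sqrt{r}^{\,x^q}dx$ shows the tail is of order $N^{1-q}\cdot\sqrt{r}^{\,N^q}$ up to constants, hence exceeds every fixed multiple of $\sqrt{r}^{\,N^q}$. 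Nor can you repair this by splitting off more of the exponent: having already spent $r^{n^q/2}$ per term, bounding the remainder by, say, $r^{N^q/4}\cdot\sum_n r^{n^q/4}$ only yields decay $r^{N^q/4}$, strictly weaker than the asserted $\sqrt{r}^{\,N^q}$. The paper avoids this by comparing the whole sum with $\int_N^\infty r^{x^q}\,x^p\,dx$, substituting $y:=x^q\ln(1/r)$, and absorbing the resulting power $y^s$ \emph{inside} the integral via $y^s\leq(2s/e)^s\,e^{y/2}$; that is what retains exactly the factor $\sqrt{r}^{\,N^q}$ together with an $N$-independent (but $p$-dependent) constant. Since d) and e) consume a) with $p=2d$ (respectively $p=0$ and $p=2j$), and the $p$-dependence of that constant is precisely what later produces the Gevrey growth $j^{j\ell}$, the quantitative form of a) is not a detail that can be waved away.

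Relatedly, your route to the second half of d) does not go through as written: letting $m\to\infty$ in $\|f^{(d)}\|\leq\|g_m^{(d)}\|+B_d\sqrt{r}^{\,m^q}$ is vacuous (the right-hand side tends to $\|f^{(d)}\|$), and part~b) merely reparametrizes decay rates --- it does not convert approximation rates into the bounds $A\cdot K^j\cdot j^{j\ell}$. The paper instead expands $f$ in Chebyshev polynomials, uses Fact~\ref{f:Lombardi}b) to get $|c_m(f)|\leq\tfrac{4}{\pi}B\,r^{m^q}$, bounds $\|\Cheby_m^{(j)}\|$ by the Markov inequality (Fact~\ref{f:Lombardi}a)), and then invokes a) with $p=2j$: the $j$-dependence of a)'s constant is where $j^{j\ell}$ with $\ell=2/q-1$ comes from. (One could alternatively fix a small $m$ in your telescoping estimate, but again only with a correct, $d$-explicit version of a).) Your b), c), f), g) follow the paper's lines; h) is only a sketch, and the identity you quote should carry $k!$, not $n!$, in the numerator, i.e.\ $\sum\tfrac{k!}{k_1!\cdots k_n!}R^k=R(1+R)^{n-1}$.
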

Note that, in case $\ell=1=q$, Claims~c) and d) together characterize 
$\Gevrey_1[-1;1]=\Comega[-1;1]$ in terms of function
approximability by polynomials \cite{Demanet}
but leave a gap in cases $q<1<\ell$; cmp.
\mycite{Remarques~5.2.5(4)}{Lombardi}.
As a consequence, the proof of Theorem~\ref{t:Gevrey}a) below will show
$\tilde\gamma\big|^{\Gevrey_\ell}\reduceP\tilde\lambda\big|^{\Gevrey_\ell}$
uniformly in $\ell$
but `only' $\tilde\lambda\big|^{\Gevrey_\ell}\reduceP\tilde\gamma\big|^{\Gevrey_{2\ell-1}}$:
with consequences to the proof of Theorem~\ref{t:Gevrey}b) below.

We postpone the proof of Proposition~\ref{p:Lombardi} in order to first state 
our main

\begin{theorem} \lab{t:Gevrey}
\begin{enumerate}
\item[a)]
Up to second-order polytime equivalence,
Definition~\ref{d:Lombardi}c) generalizes and extends
Definition~\ref{d:Analytic}c) from $\Comega[-1;1]$ to $\Gevrey[-1;1]$.
Moreover, $\tilde\gamma$ is second-order polytime equivalent to $\tilde\lambda$
constituting another second-order representation of $\Gevrey[-1;1]$.
\item[b)]
The following operations are uniformly polytime 
$\tilde\lambda$--computable on $\Gevrey_{\ell}[-1;1]$: \\
Evaluation (i), addition (ii), multiplication (iii), 
iterated differentiation (iv), 
parametric integration (v), 
and parametric maximization (vi). 
\item[c)]
Composition (vii) as the partial operator
\[ (g,f)\mapsto g\circ f \quad\text{ for }\quad
f,g\in\Gevrey_{\ell}[-1;1]
\quad\text{ with }\quad f:[-1;1]\to[-1;1]  \]
is fixed-parameter $(\tilde\gamma\times\tilde\gamma,\tilde\gamma)$--computable. 
\item[d)]
Fix $\ell\in\IN$. Then 
$(\dyrho,\rhody)$--computing $\Max$ restricted to
$\Gevrey_{\ell+1,1,1}[-1;1]$ requires time at least
$\Omega(n^\ell)$.
\end{enumerate}
\end{theorem}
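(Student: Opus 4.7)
The plan is to adapt the adversary argument from Example~\ref{x:Param3}f) to the richer Gaussian family of Example~\ref{x:Gevrey}c). Given $n \in \IN$, I would choose $N := \lfloor cn/\ell \rfloor$ with $c > 0$ small enough to ensure $1 + N\ell/e < (n-2)\ln 2$, and set $K := N^\ell$. By Example~\ref{x:Gevrey}c), the bumps $g_{\ell,N,k}$ for $0 \leq k < K$ (as well as the zero function) all belong to $\Gevrey_{\ell+1,1,1}[-1;1]$. The point is that $\Max(0) = 0$ whereas $\Max(g_{\ell,N,k}) = e^{-1-N\ell/e} > 2^{2-n}$; hence any $(\dyrho,\rhody)$-algorithm producing $\Max(f)$ to error $2^{-n}$ must distinguish the zero function from each $g_{\ell,N,k}$.

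Next I would set up the adversary: on every $\dyrho$-query (requesting a dyadic $b$ with $|b - f(x)| \leq 2^{-m}$ for some $x$ in a ball $B$ of radius $2^{-m}$ around a dyadic $a$), the adversary simply returns $b := 0$. This is consistent with $f = 0$, and with $f = g_{\ell,N,k}$ unless every $x \in B$ satisfies $g_{\ell,N,k}(x) > 2^{-m}$. Because $g_{\ell,N,k}$ is unimodal around $k/N^\ell$, the latter condition forces $(N^\ell x - k)^2 + 1 + N\ell/e < m \ln 2$ for all $x \in B$, placing $k$ within an interval of radius $\sqrt{m \ln 2}$ around $N^\ell a$. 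Consequently the set of \emph{exposed} shifts $k$ (those for which $b = 0$ is inconsistent) has cardinality at most $C\sqrt{m}$ for some absolute constant $C$.

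Then I would combine these per-query bounds. To distinguish the zero function from every $g_{\ell,N,k}$, a computation must issue queries $(a_i, m_i)_{i=1}^q$ whose exposure sets cover $\{0, \ldots, K-1\}$, forcing $\sum_i C\sqrt{m_i} \geq K$. Each query of precision $m_i$ writes an argument and reads a response, both of length $\Theta(m_i)$ and hence taking $\Omega(m_i)$ steps; also $q \leq T$, where $T$ is the total runtime. Cauchy-Schwarz then yields
\[
K/C \;\leq\; \sum\nolimits_i \sqrt{m_i} \;\leq\; \sqrt{q \cdot \sum\nolimits_i m_i} \;\leq\; \sqrt{T \cdot T} \;=\; T,
\]
and substituting $K = N^\ell = \Omega\big((n/\ell)^\ell\big) = \Omega(n^\ell)$ (for fixed $\ell$) produces the claimed $T = \Omega(n^\ell)$.

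The main obstacle will be the ball-versus-point nature of the $\dyrho$-oracle: an answer need only witness $f$ at \emph{some} point of a $2^{-m}$-neighbourhood of the queried dyadic, not at the dyadic itself. Since each $g_{\ell,N,k}$ is monotone away from $k/N^\ell$, the minimum over $B$ is attained at the point of $B$ farthest from $k/N^\ell$, so the exposure radius shrinks only by an additive $2^{-m}$ which is absorbed into the same $O(\sqrt{m})$ count; but tracking constants through this reduction is the step that requires care. Together with the (parameterized) polytime upper bound inherited from Theorem~\ref{t:Gevrey}b\,vi), this lower bound identifies the Gevrey index $\ell$ as precisely the exponent governing the uniform complexity of $\Max$ along Gevrey's hierarchy.
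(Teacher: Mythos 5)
Your proposal covers only part d) of the theorem; parts a), b) and c) --- which make up most of the statement and most of the paper's proof --- are missing entirely. Establishing a) requires showing $\tilde\gamma$ and $\tilde\lambda$ second-order polytime equivalent, which in the paper rests on the quantitative approximation theory of Proposition~\ref{p:Lombardi} (Jackson-/Markov-type estimates, Chebyshev interpolation with coefficients rounded to $\ID_m$, and the passage between decay rates $B\cdot r^{m^{1/\ell}}$ and derivative bounds $A\cdot K^j\cdot j^{j\ell}$, with the loss $\ell\mapsto 2\ell-1$ noted there); b) then performs evaluation, arithmetic, differentiation, integration and quantifier-elimination maximization on the degree-$C\cdot n^{\ell}$ polynomials stored in a $\tilde\lambda$--name (via Theorem~\ref{t:PowerSeries}f) and Proposition~\ref{p:Lombardi}b+d)); and c) needs the Gevrey-stability of composition through the Fa\`a di Bruno estimate of Proposition~\ref{p:Lombardi}h). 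None of this machinery, nor any substitute for it, appears in your write-up; in particular the closing appeal to ``the polytime upper bound inherited from Theorem~\ref{t:Gevrey}b\,vi)'' is circular, since b\,vi) is itself part of what has to be proven.

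For part d) your argument is sound and is essentially the paper's own: the same Gaussian family $g_{\ell,N,k}\in\Gevrey_{\ell+1,1,1}[-1;1]$ from Example~\ref{x:Gevrey}c), the same adversary answering $0$, and the same covering count, with $n=\Theta(N)$ so that all $K=N^{\ell}=\Omega(n^{\ell})$ bumps must be exposed. The differences are cosmetic: the paper bounds the exposure set of a precision-$2^{-m}$ query by $O(m)$ shifts and concludes $\sum_j m_j\geq K/2$ directly (as in Example~\ref{x:Param3}f), whereas you use the sharper $O(\sqrt{m})$ bound and finish with Cauchy--Schwarz; both yield $T\geq\Omega(n^{\ell})$ (your sharper count would even give $\Omega(n^{\ell+1/2})$ if you charged the $\Omega(m_i)$ query cost against only $O(\sqrt{m_i})$ exposures instead of invoking $q\leq T$). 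Your treatment of the ball-versus-point semantics of $\dyrho$ (the additive $2^{-m}$, negligible against $\sqrt{m}/N^{\ell}$ because only queries with $m\geq n-2$ expose anything) is also fine. So part d) stands, but as a proof of Theorem~\ref{t:Gevrey} the proposal has a substantial gap, namely all of a)--c).
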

\begin{proof}[Theorem~\ref{t:Gevrey}]
\begin{enumerate}
\item[a+b\,i)]
Every $f\in\Gevrey_\ell[-1;1]$ has a $\tilde\gamma$--name $\psi$
of length $|\psi|(n)=\log A+K+n^\ell$. For $f\in\Comega[-1;1]$
a second-order polynomial in $n$ and $n\mapsto\log A+K+n$ amounts,
to a bivariate polynomial in $n$ and $k:=\log A+K$;
recall Observation~\ref{o:SecondVsFirst}b). Moreover
$k$ can be recovered from $\psi$; and any bound
$A\cdot K^j\cdot j^j$ according to Equation~(\ref{e:Gevrey1})
with $k=\log A+K$ 
is in turn dominated by $A'\cdot K'^j\cdot j!$ for
$A':=2^k/\sqrt{2\pi}$ and $K':=k\cdot e$
by virtue of \textsf{Stirling} (Fact~\ref{f:Lombardi}f)
with both $A'$ in binary and $K'$ in unary computable
from $\psi$ within second-order polytime;
hence Definition~\ref{d:Lombardi}c) indeed extends
Definition~\ref{d:Analytic}c).
\\
In order to see $\tilde\gamma\reduceP\tilde\lambda$,
recall that $\|f'\|\leq A\cdot K$ implies
evaluation $\Gevrey[-1;1]\times[-1;1]\ni (f,x)\mapsto f(x)$
be second-order polytime $(\tilde\gamma,\rhody,\rhody)$--computable
according to Example~\ref{x:Representation2}b)+e)+f).
Now note that, for $B',r$ from Proposition~\ref{p:Lombardi}c+e),
$C$ according to the first part of
Proposition~\ref{p:Lombardi}b) is polynomially
bounded in $\log A+K$; hence interpolating $f$ 
on $m=C\cdot n^\ell$ Chebyshev nodes 
$x_{m,0},\ldots,x_{m,m-1}$ from Fact~\ref{f:Lombardi}d) 
is feasible within time a second-order polynomial in $n$
and $n\mapsto \log A+K+n^\ell$.
\\
Conversely, the polynomial $\hat g_m$ in a 
$\tilde\lambda$--name $\psi$ of $f$ yields approximate evaluation
of $f$ within time a second-order polynomial in the output
error $n$ and the length $|\psi|(n)\approx m^2=(C\cdot n^\ell)^2$
and in particular allows to obtain a $\dyrho$--name. 
Moreover the parameters $C$ (in unary) and $\ell$ 
are easily recovered and lead to $\ell':=2\ell-1$ 
and $A,K$ satisfying Equation~(\ref{e:Gevrey1})
according to the second part of 
Proposition~\ref{p:Lombardi}c) and Proposition~\ref{p:Lombardi}e);
in fact both the binary length of $A$ and the value 
of $K$ (i.e. in unary) are polynomially bounded in (the value of) 
$C$ and exponentially in $\ell$, hence allowing also the map
$\vec w\mapsto\sdone^{\log A+K+|\vec w|^{\ell'}}$ 
to be computed within time a second-order polynomial
in $n$ and in $n\mapsto (C\cdot n^\ell)^2$.
\item[b\,ii)]
Given sequences $\hat f_n\in\ID_{m_f(n)}[X]_{m_f(n)}$ 
with $m_f(n)=C_f\cdot n^\ell$ and $\|f-\hat f_n\|\leq 2^{-n}$
as well as
$\hat g_n\in\ID_{m_g(n)}[X]_{m_g(n)}$ 
with $m_g(n)=C_g\cdot n^\ell$ and $\|g-\hat g_n\|\leq 2^{-n}$,
output $\hat f_{n+1}+\hat g_{n+1}\in\ID_{m_{f+g}(n)}[X]_{m_{f+g}(n)}$
where $m_{f+g}(n):=\max\{C_f,C_g\}\cdot n^\ell$.
\item[b\,iii)]
Given a $\tilde\lambda$--name $\psi_f$ of $f$,
observe that $\|f\|\leq 1+\|\hat f_0\|\leq 1+|a_{0,0}|+|a_{1,0}|+\cdots+|a_{C_f,0}|
\leq 2^{\calO(|\psi_f|(0)|)}$ where $\psi_f(\sdone^n)$ 
encodes the coefficients of $\hat f_n\in\ID_{m_f(n)}[X]_{m_f(n)}$ in binary
with $m_f(n)=C_f\cdot n^\ell$.
Now in view of
$\|f\cdot g-\hat f_n\cdot\hat g_n\|\leq \|f\|\cdot\|g-\hat g_n\|+\|f-\hat f_n\|\cdot\|\hat g_n\|$
it suffices to output $\hat f_{n'}\cdot\hat g_{n'}\in\ID_{m_{f\cdot g}(n')}[X]_{m_{f\cdot g}(n')}$ 
for $n'=\calO\big(n+|\psi_f|(0)+|\psi_g|(0)\big)$,
where $m_{f\cdot g}(n)=m_f(n)+m_g(n)=(C_f+C_g)\cdot n^\ell$.
\item[b\,iv)] 
Given a sequence $\hat f_n\in\ID_{m(n)}[X]_{m(n)}$ with
$\|f-\hat f_n\|\leq2^{-n}$ for $m(n)=C\cdot n^\ell$,
combine Proposition~\ref{p:Lombardi}b+d) to see that
$\hat f^{(d)}_{N(n,\ell,C,d)}\in\ID_{M(n,\ell,C,d)}[X]_{M(n,\ell,C,d)}$ satisfies 
$\|f-\hat f^{(d)}_{N(n,\ell,C,d)}\|\leq2^{-n}$ for 
$N,M$ polynomials in $n^\ell$, $C$, $\ell^\ell$, $d^\ell$
and in particular bounded by second-order polynomials 
in $d$ and in $n\mapsto C\cdot n^\ell$, i.e., in the input size.
Indeed: $C$ leads,
according to the second part of Proposition~\ref{p:Lombardi}b),
to $B:=2$ and $r:=2^{-C^{1/\ell}}$;
then furtheron to $B_d:=2\cdot\big(\tfrac{6}{q\cdot e\cdot\ln1/r}\big)^{(1+2d)\ell}$ 
and $\sqrt{r}$ according to the first part of Proposition~\ref{p:Lombardi}d);
and finally, according to the first part of Proposition~\ref{p:Lombardi}b),
to $C_d:=(1+\tfrac{\log B_d}{\log1/\sqrt{r}})^\ell$ 
polynomially bounded in $d^\ell$, $\ell^\ell$, 
and $C\geq\Omega\big((\log C)^\ell\big)$.
\item[b\,v)]
Given a sequence $\hat f_n\in\ID_{m(n)}[X]_{m(n)}$,
$\|f-\hat f_n\|\leq2^{-n}$,
it obviously holds $\|\int f-\int\hat f_n\|\leq2^{-n}$.
\item[b\,vi)]
Similarly, maximize $\hat f_n$ as in the proof of
Theorem~\ref{t:PowerSeries}f).
\item[c)]
The estimate $\|f'\|\leq A\cdot K$ yields
$\tilde\gamma\reduceP\dyrhoLip$ and thus a 
$\dyrho$--name of $g\circ f$ according to Example~\ref{x:Param3}b).
The binary parameter $\log C+M$ for $g\circ f$ can be obtained from
$\log A+K$ of $f$ and from $\log B+L$ of $g$ due to
Proposition~\ref{p:Lombardi}h) and are independent of $n$.
\item[d)]
Similarly to the proof of 
Example~\ref{x:Param3}f),
recall the functions $g_{\ell,N,k}\in\Gevrey_{\ell+1,1,1}[-1;1]$
from Example~\ref{x:Gevrey}c) 
satisfying $g_{\ell,N,k}(k/N^\ell)=\exp(N\cdot\ell/e+1)$.
and $g_{\ell,N,k}(x)\leq\exp(N\cdot\ell/e)$
for $|x-k/N^\ell|\geq 1/N^\ell$.
Hence any algorithm computing $f\mapsto\max\{f(x)\mid0\leq x\leq 1\}$ 
on $\{0,g_{\ell,N,0},\ldots,g_{\ell,N,N-1}\}\subseteq\Gevrey_{\ell+1,1,1}[-1;1]$
up to error $2^{-n}$, $n=:(2+N\cdot\ell/e)/\ln e$,
must distinguish (every name of) the identically zero function from 
(all names of) each of the $g_{\ell,N,k}$ ($0\leq k<N^\ell$).
Yet, since the $f_{\ell,N,k}$ are `thin', any approximate 
evaluation up to error $2^{-m}$ 
at some $x$ with $|x-k/N^\ell|\geq m/N^\ell$.
could return $0$. 
\qed\end{enumerate}\end{proof}

\begin{proof}[Proposition~\ref{p:Lombardi}]
\begin{enumerate}
\item[a)]
Since $n-1\leq x\leq n$ implies
$r^{n^q}\leq r^{x^q}$ and $(n-1)^p\leq x^p$,
it follows 
$r^{n^q}\cdot (n-1)^p \leq \int\nolimits_{n-1}^n r^{x^q}\cdot x^p\,dx$ 
and 
$\sum\nolimits_{n>N} r^{n^q}\cdot (n-1)^p \leq
\int\nolimits_N^\infty r^{x^q}\cdot x^p\,dx$.
In the latter integral substitute $y:=x^q\cdot\ln(1/r)$,
ranging from $M:=N^q\cdot\ln(1/r)$ to $\infty$;
moreover $x=\big(\frac{y}{\ln1/r}\big)^{1/q}$ 
and
$\tfrac{dy}{dx}=q\cdot\ln(1/r)\cdot x^{q-1}
=q\cdot y/x=y^{1-1/q}\cdot q\cdot\ln(1/r)^{1/q}$.
The integral thus transforms into
\[ \int\nolimits_M^\infty e^{-y}
\cdot \Big(\frac{y}{\ln 1/r}\Big)^{p/q}
\cdot y^{1/q-1} \cdot \tfrac{1}{q} \cdot (\ln\tfrac{1}{r})^{-1/q}\,dy 
\;=\;
\tfrac{1}{q} \cdot \big(\ln\tfrac{1}{r}\big)^{-(1+p)/q} \cdot \int_M^\infty
e^{-y}\cdot y^{s}\,dy \]
with $s:=(1+p)/q-1\geq0$.
According to Lemma~\ref{l:Derivative}b),
$y^s\leq (\tfrac{s}{e\ln\sqrt{e}})^{s}\cdot e^{y/2}$;
yielding the bound 
\[ \int_M^\infty e^{-y}\cdot y^{s}\,dy
\;\leq\; \big(\tfrac{2s}{e}\big)^s \cdot e^{M/2}/2, \qquad
e^{M/2}=\sqrt{r}^{N^q} \enspace . \]
\item[b)]
Taking binary logarithms on both sides shows the claim equivalent to
$m\geq\big(\tfrac{n+\log B}{\log 1/r}\big)^{1/q}$, 
and the latter is $\leq C\cdot n^{1/q}$. \\
Given $m$, considering $n':=\lfloor(m/C)^q\rfloor\geq (m/C)^q-1$,
the largest $n$ with $m\geq C\cdot n^{1/q}$, implies
$\varepsilon_m\leq 2^{-n'}\leq 2^{1-(m/C)^q}=2\cdot r^{m^q}$.
\item[c)]
Following \cite[p.324]{Lombardi},
Fact~\ref{f:Lombardi}d) implies
$\|f-g_m\|\leq A\cdot \big(\tfrac{\pi K}{2m}\cdot k^\ell\big)^k$
for every $k\in\IN$. Now choosing $k:=(\tfrac{m}{2\pi K})^{1/\ell}$
would yield the bound $A\cdot r^{m^q}$; but $k$ must be integral.
To this end observe that the claim does hold in case $k=1$ 
by means of $B\geq A$;
and for non-integral $k>1$ 
(where necessarily $\tfrac{\pi K}{2m}\cdot k^\ell<1$),
there exists an integer $k'$ between $k-1$ and $k$:
yielding $(\tfrac{\pi K}{2m}\cdot k'^\ell)^{k'}
\leq\big(\tfrac{\pi K}{2m}\cdot k^\ell\big)^{k-1}=2r^{m^q}$.
\item[d)]
Observe that triangle equality yields,
in connection with Fact~\ref{f:Lombardi}a), 
$\|g^{(d)}_m-g^{(d)}_{m-1}\|\leq (m-1)^{2d}\cdot 2B\cdot r^{m^q}$; 
hence, for $N\geq n$,
\[ \|g^{(d)}_{N}-g^{(d)}_n\|\;=\; \Big\|\sum\nolimits_{m=n+1}^N g^{(d)}_m-g^{(d)}_{m-1}\Big\|
\;\leq\; 2B\cdot \sum\nolimits_{m>n} (m-1)^{2d}\cdot r^{m^q} \enspace . \]
Now apply a) to conclude
$\|g^{(d)}_{N}-g^{(d)}_n\|\leq B_d\cdot\sqrt{r}^{m^q}$ independent of $N$,
showing $(g^{(d)}_m)_{_m}$ to be a Cauchy sequence in $C[-1;1]$ 
with uniform limit $f^{(d)}$ according to Fact~\ref{f:Lombardi}e).
\\
On the other hand, Fact~\ref{f:Lombardi}b) asserts
$|c_{m}(f)|\leq B\cdot\tfrac{4}{\pi}\cdot r^{m^q}$;
from which a) together with, again, Fact~\ref{f:Lombardi}b) implies 
$f=\lim_m\OCHEBY{m}(f)=c_0(f)/2+\sum_{m\geq1} c_{m}(f)\cdot \Cheby_m$
with respect to uniform convergence, and thus (Fact~\ref{f:Lombardi}e)
\[
\|f^{(j)}\| \;=\; \big\|\sum\nolimits_{m\geq j} c_{m}(f)\cdot \Cheby_m^{(j)}\big\|
\;\leq\; \sum\nolimits_{m\geq j} |c_m(f)|\cdot m^{2j}/j! \]
according to Fact~\ref{f:Lombardi}a),
observing $1\cdot 3\cdot 5\cdots (2j-1)\geq j!$.
Now with $m^{2j}/j!\leq \big(2\cdot(m-1)\big)^{2j}/j!+1$ for $m,j\in\IN$
we can invoke a) for $p=2j$ and for $p=0$ to continue bounding 
\begin{eqnarray*} \|f^{(j)}\| &\leq&
B\cdot\tfrac{4}{\pi} \cdot 4^j \cdot\tfrac{1}{2q\cdot\ln1/r}
\cdot\Big(\big(2\tfrac{1-p+2j}{q\cdot e\cdot\ln1/r}\big)^{({1-q+2j})/{q}}/j!
+\big(2\tfrac{1-p}{q\cdot e\cdot \ln1/r}\big)^{(1-q)/{q}}\Big)
\cdot\sqrt{r}^{(j+1)^q} \\
&\leq& B\cdot C\cdot\tfrac{4e}{3\pi}\cdot 4^j\cdot
(C\cdot j)^{(2-2q+2j)/{q}}/j!,
\qquad C:=6/\big(qe\ln\tfrac{1}{r}\big)
\end{eqnarray*}
since $1-q\leq 1\leq j\leq 3j$ and by generously replacing the
sum of the two large terms with their product.
Moreover $C\cdot C^{(2-2q)/q}\leq C^{2/q}$,
$j^{(2-2q)/q}\leq j^{2/q}\leq \big(\tfrac{2}{qe}\big)^{2/q}\cdot e^j$
by virtue of Lemma~\ref{l:Derivative}b),
and $j!\geq\sqrt{2\pi}\cdot j^{j+1/2}\cdot e^{-j}$
due to \textsf{Stirling} (Fact~\ref{f:Lombardi}f).
\item[e)]
By c) and Fact~\ref{f:Lombardi}c),
$\|f-\hat g_m\|\leq B\cdot r^{m^{1/\ell}}\cdot
(2+\tfrac{2}{\pi}\log m)+m\cdot 2^{-m}$
since the rounding changes $m$ coefficients by $\leq2^{-m}$.
Now $2+\tfrac{2}{\pi}\log m\leq 2\cdot\log(2m)\leq 2m
\leq 2\cdot\big(\tfrac{2\ell}{e\cdot\ln1/r}\big)^{\ell}\cdot (1/\sqrt{r})^{m^{1/\ell}}$
because $n:=m^{1/\ell}$ has $m=n^{\ell}\leq \big(\tfrac{\ell}{e\cdot\ln1/\sqrt{r}}\big)^{\ell}\cdot\sqrt{r}^{n}$
according to Lemma~\ref{l:Derivative}b).
Similarly, Lemma~\ref{l:Derivative}a) yields $m\leq(2\sqrt{r})^m$ 
because $r\geq2^{-1/(2\pi)}$ implies $\tfrac{1}{e\cdot\ln(2\sqrt{r})}\leq1$;
hence $m\cdot2^{-m}\leq \sqrt{r}^m$.
\item[f)]
By hypothesis
$\|f^{(d+j)}\|\leq A\cdot L^{d+j}\cdot (d+j)^{(d+j)\ell}$;
and $d+j\leq 2dj$ implies 
$(d+j)^{(d+j)\ell}\leq j^{d\ell}\cdot (2d)^{d\ell}\cdot \big((2d)^\ell\big)^j$
where $j^{d\ell}\leq (d\ell/e)^{d\cdot\ell}\cdot e^j$
by virtue of Lemma~\ref{l:Derivative}b).
\item[g)]
According to the \textsf{General Leibniz Rule},
\[ 
\|(f\cdot g)^{(j)}\| 
\;\leq\; 
A\cdot B\cdot \underbrace{\sum\nolimits_{k=0}^j \binom{j}{k} K^k\cdot L^{j-k}}_{=(K+L)^j}
\cdot \underbrace{k^{k\ell}\cdot (j-k)^{(j-k)\ell}}_{\leq j^{k\ell}\cdot j^{(j-k)\ell}=j^{j\ell}} \]
\item[h)]
Following \mycite{\S I.2.1}{Gevrey}, we first record that induction on $n$ 
in the Fa\`{a} di Bruno's Formula (Fact~\ref{f:Lombardi}f) 
shows (i) the coefficients $\displaystyle\tfrac{n!}{k_1!\cdot 1!^{k_1}\cdot k_2!\cdot 2!^{k_2}
\cdots k_n!\cdot n!^{k_n}}$ to all be nonnegative integers. 
Secondly (ii), according to Stirling,
\begin{multline*}
\sum\nolimits_{k_1,\ldots,k_n}
\tfrac{k^k}{k_1!\cdot k_2!\cdots k_n!}\cdot R^k \cdot 
({1^1}/{1!})^{k_1}\cdot
({2^2}/{2!})^{k_2}\cdots
({n^n}/{n!})^{k_n}  \\
\leq\quad
\sum\nolimits_{k_1,\ldots,k_n}
\tfrac{k!}{k_1!\cdot k_2!\cdots k_n!}\cdot (e\cdot R)^k/\sqrt{2\pi} 
\cdot (e^1/\sqrt{2\pi})^{k_1}\cdot
\cdot (e^2/\sqrt{2\pi})^{k_2}\cdots
\cdot (e^n/\sqrt{2\pi})^{k_n} \\
=\quad
e^n/\sqrt{2\pi}\cdot (e\cdot R/\sqrt{2\pi})\cdot (1+e\cdot R/\sqrt{2\pi})^{n-1}
\;\leq\; (e\cdot R)^n
\end{multline*}
for $R\geq1$ by virtue of Fact~\ref{f:Lombardi}f).
Now we can bound $\|(g\circ f)^{(n)}\|$ with
\begin{eqnarray*}
\lefteqn{\sum_{k_1,\ldots,k_n} 
\tfrac{n!}{k_1!\cdot k_2!\cdots k_n!}\cdot
(B\cdot L^k\cdot k^{\ell k})
\cdot (A\cdot K^1\cdot \tfrac{1^{1\ell}}{1!})^{k_1}
\cdot (A\cdot K^2\cdot \tfrac{2^{2\ell}}{2!})^{k_2}
\cdots (A\cdot K^n\cdot \tfrac{n^{n\ell}}{n!})^{k_n}} \\
&=& 
B\cdot K^n\cdot \sum_{k_1,\ldots,k_n} 
\tfrac{n!}{k_1!\cdot 1!^{k_1}\cdot k_2!\cdot 2!^{k_2}
\cdots k_n!\cdot n!^{k_n}}
\cdot \big(\sqrt[\ell]{A\cdot L}^k
\cdot k^k \cdot 
({1^1})^{k_1}\cdot
({2^2})^{k_2} \cdots
({n^n})^{k_n}\big)^{\ell}  \\
&\overset{\text{(i)}}{\leq}&
B\cdot K^n\cdot \sum_{k_1,\ldots,k_n} 
\big(\tfrac{n!}{k_1!\cdot 1!^{k_1}\cdot k_2!\cdot 2!^{k_2}
\cdots k_n!\cdot n!^{k_n}}\big)^\ell
\cdot \big(\sqrt[\ell]{A\cdot L}^k
\cdot k^k \cdot 
({1^1})^{k_1}\cdot
({2^2})^{k_2} \cdots
({n^n})^{k_n}\big)^{\ell}  \\
&\leq&
B\cdot K^n\cdot \Big(
\sum_{k_1,\ldots,k_n} 
\tfrac{n!}{k_1!\cdot 1!^{k_1}\cdot k_2!\cdot 2!^{k_2}
\cdots k_n!\cdot n!^{k_n}}
\cdot \sqrt[\ell]{A\cdot L}^k
\cdot k^k \cdot 
({1^1})^{k_1}\cdot
({2^2})^{k_2} \cdots
({n^n})^{k_n}\Big)^{\ell}  \\
&\overset{\text{(ii)}}{\leq}& 
B\cdot K^n\cdot \big(n!\cdot 
(e\cdot \sqrt[\ell]{A\cdot L})^n\big)^\ell
\end{eqnarray*}
where $n!^{n\ell}\leq n^{n\ell}\cdot (n^{1/2}\cdot e)^\ell\cdot e^{-n\ell}$
and $n^{\ell/2}\leq \sqrt{\ell/(2e)}^{\ell}\cdot e^n$ 
according to Stirling and Lemma~\ref{l:Derivative}b).
\qed\end{enumerate}\end{proof}

\section{Conclusion and Perspectives}
We have constructed a parameterized and two second-order representations 
of the space $\Comegax{[0;1]}$ of analytic functions on $[0;1]$;
shown them second-order polytime equivalent; and to render
the basic primitives second-order polytime computable. 
In view of Remark~\ref{r:Nonunif}
this subsumes the known nonuniform results from Example~\ref{x:Upper};
but now 
\begin{enumerate}
\item[a)] explicitly specifies the additional discrete information employed
\item[b)] and an asymptotic running time analysis taking into account 
  both the dependence on the output precision $n$ and 
  $f$ (i.e. parameters according to a)
\item[c)] which turns out to be (ordinary) polynomial ---
  except for composition which is fixed-parameter tractable
  but may increase one parameter exponentially.
\end{enumerate}
These results entail a-priori judgement of whether, and on which inputs $f$,
the algorithms underlying Theorems~\ref{t:PowerSeries} and \ref{t:Analytic}
may be efficient in practice;
see below. And a) suggests concrete data structures for
representing real analytic functions in, say, \iRRAM 
as well as for 
\texttt{C++} interface declarations of actual
implementations of the above operators (i) to (vii).
Concerning future work we record

\begin{goals}
\begin{enumerate}
\item[a)]
Investigate the complexity of division,
that is the (partial) operator $f\mapsto 1/f$.
\item[b)]
Investigate the complexity of inversion,
that is the (partial) operator $f\mapsto f^{-1}$.
\item[c)]
Generalize all above results to the multivariate case.
\end{enumerate}
\end{goals}
Item~b) may suggest a parameterization in terms
of the \emph{modulus of unicity}; cmp.,
e.g., \mycite{Theorem~4.6}{Ko91} and
\mycite{\S16.1}{Kohlenbach}.

\subsection{Concerning Practical Efficiency}
The algorithms underlying Theorem~\ref{t:PowerSeries}
look practical and promising to implement and evaluate on.

For instance each invocation of c) and d) can essentially double
the value of $K$ and thus also the running time of subsequent 
operations. However the same applies already to iterations
of ordinary integer multiplication (repeated squaring) and hence 
seems unavoidable in the worst case. We expect some improvement 
on `typical' cases, though, from refining the bound
in Equation~(\ref{e:Hadamard}) to the more general
form $|a_j|\leq A(j)/r^j$ with $A:\IN\to\IN$ from an
appropriate function class \cite{Bitterlich}.

It may be advisable to avoid in general
invoking (the algorithm realizing)
Theorem~\ref{t:PowerSeries}g) whenever possible. For instance concerning
the proof of Theorem~\ref{t:Analytic}b\,v+vi) it may be less elegant 
but more practical to handle also non-equidistant $x_m$ and different
$L_m$. To this end consider the following alternative approach towards
identifying intervals $I_k$ partitioning $[u;v]$ and each contained
within some $J_m$:
After choosing $m$ with $u\in J_m$ and $m'$ with $v\in J_{m'}$,
output $I:=[u,u']$ where $u':=\min\big(v,x_m+\tfrac{1}{2L_m}\big)$ as before,
but also initialize $M:=\{m\}$.
While $m\neq m'$ holds (otherwise we are done covering $[u;v]$),
iteratively set $u:=u'$ and choose to this new $u$ some 
$m\not\in M$ with $u\in J_m$; again output 
$I:=[u,u']$ for $u':=\min\big(v,x_m+\tfrac{1}{2L_m}\big)$
and let $M:=M\cup\{m\}$. The restriction to $m\not\in M$ 
avoids possible cycles in degenerate cases like
$x_k+\tfrac{1}{2L_k}=x_\ell+\tfrac{1}{2L_\ell}$ for some $k\neq\ell$.
On the other hand $m\not\in M$ with $u\in J_m$ always exists:
On the one hand we have $\bigcup_m J_m\supseteq[0;1]$;
and on the other hand induction on the number of loop 
iterations shows $x_m+\tfrac{1}{2L_m}\leq u$ to hold
for all $m\in M$.

\subsection{Quantitative Complexity Theory of Operators}
Polytime computability is generally considered appropriate
a formalization of efficient tractability in practice.
However asymptotic running times like $\calO(n\log n)$ 
and $\calO(n^{999})$, although both polynomial, obviously
make a considerable difference. 
Actually specifying the exponent of polynomial growth thus
allows for a more precise estimate of the (range of) practicality 
of an algorithm: softly linear, quadratic, cubic etc.
Such an approach seems infeasible for second-order polynomial
time bounds, though, because they cannot naturally be ordered 
linearly with respect to asymptotic growth:

\begin{myexample} \label{x:Second}
With respect to $n\to\infty$,
$P(n,\ell)=\ell\big(\ell(n^2)\cdot n\big)\cdot n$
grows asymptotically slower than
$Q(n,\ell)=\big(\ell(n^3)\big)^2\cdot n^9$
in case $\ell(n)\leq\calO\big(n^{(5+\sqrt{89})/4}\big)$
and faster otherwise.  
More precisely, for $\ell$ a polynomial in $n$ of degree $d$, 
it holds $\deg P(n,\ell)=2d^2+d+1$ and $\deg Q(n,\ell)=6d+9$. 

Put differently: When algorithms $\calA$ and $\calB$ exhibit
second-order polynomial running times $P$ and $Q$, respectively,
then none is superior to the other \emph{globally}; but for a given family 
of inputs of known size $\ell$, one can predict whether $\calA$
or $\calB$ is asymptotically preferable.
\end{myexample}
This suggests to quantify the asymptotic growth of a
second-order polynomial $P(n,\ell)$ in terms of its 
degree with respect to $n$ as a (polynomial) function of $\deg(\ell)$.
To this end, the following tool may turn out as useful:

\begin{mylemma} \label{l:Composition}
\begin{enumerate}
\item[a)]
Let $\calM_1$ denote an ordinary Turing machine calculating
a partial map $f:\subseteq\{\sdzero,\sdone\}^*\to\{\sdzero,\sdone\}^*$
within time $\leq t_1(n)$ producing outputs of length at most
$s_1(n)\leq t_1(n)$; similarly for another machine
$\calM_2$ calculating $g$ 
of length at most $s_2(n)$
within time $\leq t_2(n)$.
Then there is a machine $\calM_3$
calculating the composition $g\circ f$
within time $\leq t_3(n):=t_1(n)+t_2\big(s_1(n)\big)$ 
of length at most $s_3(n):=s_2\big(s_1(n)\big)$.
\item[b)]
Let $\calM_1^?$ denote an oracle Turing machine calculating
a partial mapping $F:\subseteq\Reg\to\Reg$ within second-order
time $\leq T_1=T_1(n,\ell)$ producing, for every length-monotone
oracle $\psi$ and inputs $\vec u\in\{\sdzero,\sdone\}^n$, 
outputs of length at most $S_1(n,|\psi|)\leq T_1(n,|\psi|)$;
similarly for another machine $\calM_2^?$ calculating $G$
within time $\leq T_2(n,\ell)$ of size at most $S_2(n,\ell)$.
Then there is a machine $\calM_3^?$ calculating $G\circ F$
within time $\leq T_3$ and of size at most $S_3$, where
\[ S_3(n,\ell):=S_1\big(n,S_2(\cdot,\ell)\big) \;\;\text{and}\;\;
 T_3(n,\ell):=\tilde T(n,\ell)\cdot T_2\big(\tilde T(n,\ell),\ell\big)
, \;\; \tilde T(n,\ell):=T_1\big(n,S_2(\cdot,\ell)\big) \]
\item[c)]
Consider second-order polynomials $P=P(n,\ell)$ in first-order and 
second-order variables $n$ and $\ell$, respectively. Abbreviating
$d:=\deg(\ell)$, define the ordinary polynomial $\deg(P)\in\IN[d]$
by structural induction as follows:
$\deg(1):=0$, $\deg(n):=1$, $\deg\big(\ell(P\big)):=d\cdot\deg(P)$, 
$\deg(P+Q):=\max\big(\deg(P),\deg(Q)\big)$, and
$\deg(P\cdot Q):=\deg(P)+\deg(Q)$. 
\\
Then $P(n,\ell)=\calO\Big(n^{\deg\big(P\big)(d)}\Big)$ as $n\to\infty$. 
Moreover $\deg\Big(P\big(Q(n,\ell),\ell\big)\Big)=\deg(P)\cdot\deg(Q)$
and $\deg\Big(P\big(n,Q(\cdot,\ell)\big)\Big)=\deg(P)\circ\deg(Q)$.
In particular with the notation from b) it follows
$\deg(S_3)=\deg(S_1)\circ\deg(S_2)$ and
$\deg(\tilde T)=\deg(T_1)\circ\deg(S_2)$ and
$\deg(T_3)=\deg(\tilde T)+\deg(T_2)\cdot\deg(\tilde T)$.
\end{enumerate}
\end{mylemma}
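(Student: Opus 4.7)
The plan is to handle (a), (b), (c) in turn. Part (a) is the warm-up: $\calM_3$ simulates $\calM_1$ on the given input of length $n$ to produce an intermediate string of length at most $s_1(n)$ within $t_1(n)$ steps, then feeds this string to $\calM_2$ for at most $t_2(s_1(n))$ further steps, producing an output of length at most $s_2(s_1(n))$. Summing gives $t_3(n)$ and composing the size bounds gives $s_3(n)$; there is no genuine subtlety.

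For (b), I would construct $\calM_3^?$ to simulate $\calM_1^?$ on the given $\psi$ and input of length $n$, and whenever that simulation issues an oracle query at some $\vec w$, launch a sub-simulation of $\calM_2^\psi$ on $\vec w$ whose output is delivered back as the oracle reply. The function $\calM_1^?$ effectively sees as its oracle is the length-monotone string function $\vec w \mapsto \calM_2^\psi(\vec w)$, whose length function is pointwise bounded by $S_2(\cdot, |\psi|)$. Hence $\calM_1^?$ halts within $\tilde T(n, |\psi|) = T_1(n, S_2(\cdot, |\psi|))$ steps, produces output of length at most $S_1(n, S_2(\cdot, |\psi|)) = S_3(n, |\psi|)$, and in particular issues queries of length at most $\tilde T(n, |\psi|)$. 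Each such query costs at most $T_2\bigl(\tilde T(n, |\psi|), |\psi|\bigr)$ steps of sub-simulation; multiplying by the at most $\tilde T$ queries absorbs the bare outer overhead and yields $T_3$. The main subtlety in (b) is the book-keeping: although the composite oracle is accessed one value at a time, the outer machine's second-order time bound must be instantiated with the whole length-function $S_2(\cdot, |\psi|)$, not with a single integer, and the length-monotonicity of the composite output must be inherited from $\calM_1^?$'s guarantee.

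Part (c) proceeds by structural induction on $P$. The asymptotic bound $P(n, \ell) = \calO(n^{\deg(P)(d)})$ is immediate in the bases $P = 1, n$ and under sums/products, and for $P = \ell(R)$ it combines the IH on $R$ with $\ell(m) = \calO(m^d)$. The two composition identities are then proved by a joint induction on $P$. For $\deg(P(Q(n,\ell),\ell)) = \deg(P) \cdot \deg(Q)$, the only nontrivial case $P = \ell(R)$ expands via the IH on $R$ to $\deg(\ell(R(Q,\ell))) = d \cdot \deg(R(Q,\ell)) = d \cdot \deg(R) \cdot \deg(Q) = \deg(P) \cdot \deg(Q)$, while the sum/product cases rely on the distributive identity $\max(ac, bc) = \max(a,b) \cdot c$ for nonnegative polynomials in $d$. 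For $\deg(P(n, Q(\cdot, \ell))) = \deg(P) \circ \deg(Q)$, substituting $Q(\cdot, \ell)$ for $\ell$ replaces each syntactic occurrence $\ell(X)$ in $P$ by $Q(X, \ell)$ of degree $\deg(Q)(d) \cdot \deg(\text{substituted } X)$; at an $\ell(R)$ node this gives $\deg(Q)(d) \cdot (\deg(R) \circ \deg(Q))(d) = (\deg(P) \circ \deg(Q))(d)$ by IH, while the sum/product cases are direct. The final ``In particular'' clauses then follow by plugging the formulas for $S_3$, $\tilde T$, $T_3$ from (b) into these identities. The obstacle in (c) is this second composition identity: one must view $\deg(Q)$ as an actual polynomial in $d$ (not a number) and verify that the syntactic substitution at the level of second-order polynomials corresponds exactly to polynomial composition at the level of degrees.
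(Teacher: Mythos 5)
Your proposal is correct and, for part~b) --- the only item the paper actually proves --- it follows essentially the same route as the paper: the composite machine is the outer simulation of $\calM_1^?$ with each oracle query answered by a sub-simulation of $\calM_2^\psi$, the effective oracle having size bounded by $S_2(\cdot,\ell)$, whence the output bound $S_1\big(n,S_2(\cdot,\ell)\big)$, at most $\tilde T(n,\ell)$ queries of length at most $\tilde T(n,\ell)$ each, and total time $\tilde T\cdot T_2(\tilde T,\ell)$. The paper leaves a) and c) without proof; your direct simulation for a) and the structural induction for c) (with $\max$ of degree polynomials read pointwise in $d$, so that $\max(ac,bc)=\max(a,b)\,c$ holds) fill these in correctly.
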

\begin{proof}
\begin{enumerate}
\item[b)]
When presented with a length-monotone $\psi:\{\sdzero,\sdone\}^*\to\{\sdzero,\sdone\}*$
of size $\ell=|\psi|$ and on inputs $\vec a\in\{\sdzero,\sdone\}^m$, 
$\calM_2^{\psi}$ by hypothesis makes $\leq T_2(m,\ell)$ steps
and produces some output $\vec b$ of length $S_2(m,\ell)$.
In the composition
$\calM_3^{\psi}:=\calM_1^{\calM_2^\psi}$, 
$\calM_2^{\psi}$ thus serves as a string function of size
$\ell':=S_2(\cdot,\ell)$; for which $\calM_1$ converts inputs $\vec x$
of length $n$ into outputs $\vec y$ of length $S_1(n,\ell')$;
compare Figure~\ref{f:TTE2}b).
This calculation of $\calM_1$ takes at most $m:=\tilde T(n,\ell')$ steps.
In particular, $\calM_1$ can make no more than that many oracle queries
of length at most $m$, each; hence each such query, 
now fed to $\calM_2$, takes it $\leq T_2(m,\ell)$ steps to answer.
\qed\end{enumerate}\end{proof}

\subsection{Optimality Questions}
The literature on TTE provides some categorical constructions of natural
representations for certain spaces --- and can prove them optimal.
For instance $\myrho$ is known to be, up to computational equivalence, 
the only reasonable choice for the space $\IR$ \cite{Hertling};
similarly for $C[0;1]$ \mycite{\S6.1}{Weihrauch}.

Strengthening from computability to complexity on $\Comegax{[0;1]}$,
the above representations $\alpha,\tilde\beta,\tilde\gamma$
all render common primitive operations polytime computable ---
and have turned out as mutually fully polytime equivalent.
One might therefore conjecture that they are optimal in the 
sense that any second-order representation making these
operations polytime computable is in turn polytime 
reducible to $\alpha,\tilde\beta,\tilde\gamma$.

However consider the following (artificial) 
\begin{mydefinition}
A length-monotone string function $\psi:\{\sdzero,\sdone\}^*\to\{\sdzero,\sdone\}^*$ 
constitutes a $\tilde\delta$--name of $f\in\Cinfty[0;1]$ 
if, for every $d\in\IN$, the mappings
\[ \vec w\mapsto \psi(\sdzero\,\sdone^d\,\sdzero\,\vec w) \quad
\text{ and }\quad
\vec w\mapsto\psi(\sdone\,\sdone^d\,\sdzero\,\vec w) \]
constitute (padded) $\big(\rhody^{\ID^3}+\binary(\Lip)\big)$--names of
$[0;1]^2\ni (u,v)\mapsto \MAX\big(f^{(+d)},u,v\big)$
and of
$[0;1]^2\ni (u,v)\mapsto \MAX\big(f^{(-d)},u,v\big)$,
respectively.
Here $f^{(d)}$ denotes the $d$-th derivative of $f$ in case $d\geq1$;
and $f^{(-d)}$ the $d$-fold antiderivative with $f^{(-d)}(0)=0$.
\end{mydefinition}
Noting $f(x)=\MAX(f,x,x)$ and in view of Example~\ref{x:Param3}a),
it is then immediate that this second-order representation
renders the very operations from Theorem~\ref{t:Analytic}b) 
second-order polytime computable. In particular it permits 
to find integers $A_j$ with $\|f^{(j)}\|\leq A_j$;
but not to continuously (not to mention polytime computably,
and even restricted to $f\in\Comegax{[0;1]}$) deduce integers
$(K,A)$ satisfying $\forall j:\|f^{(j)}\|\leq A\cdot K^j\cdot j!$.



\begin{thebibliography}{OKZR12}
\addcontentsline{toc}{section}{\refname}
\bibitem[AbLe07]{Lester}
  \textsc{M.A.~Abutheraa, D.~Lester}:
  ``Computable Function Representations Using Effective Chebyshev Polynomial'',
  pp.103--109 in \emph{World Academy of Science, Engineering and Technology}
  vol.\textbf{31} (2007). 
\bibitem[AFL96]{Alefeld}
  \textsc{G.~Alefeld, A.~Frommer, B.~Lang} (Edts):
  ``\emph{Scientific Computing and Validated Numerics}'', Wiley-VCH (1996).
\bibitem[AnMc09]{McNicholl}
  \textsc{V.V.~Andreev, T.H.~McNicholl}:
  ``Computing Conformal Maps onto Canonical Slit Domains'', pp.23--34 in 
  (A. Bauer, R. Dillhage, P. Hertling, K. Ko, and R. Rettinger, eds.)
  \emph{Proc. 6th Int. Conf. on Computability and Complexity in Analysis} (CCA2009),
  vol.\textbf{353} of Informatik Berichte FernUniversit\"{a}t in Hagen.
\bibitem[BBP97]{Bailey}
  \textsc{D.H.~Bailey, P.B.~Borwein, S.~Plouffe}:
  ``On the Rapid Computation of Various Polylogarithmic Constants'',
  pp.903--913 in \emph{Mathematics of Computation} vol.\textbf{66:218} (1997).
\bibitem[BCSS98]{BCSS}
  \textsc{L.~Blum, F.~Cucker, M.~Shub, S.~Smale}:
  ``\emph{Complexity and Real Computation}'', Springer (1998).
\bibitem[BGP11]{Bournez}
  \textsc{O.~Bournez, D.S.~Gra\c{c}a, A.~Pouly}:
  ``Solving Analytic Differential Equations in Polynomial Time over Unbounded Domains'',
  pp.170--181 in \emph{Proc. 36th Int. Symp. on Mathematical Foundations of Computer Science}
  (MFCS'2011),  Springer LNCS vol.\textbf{6907}.
\bibitem[Bitt12]{Bitterlich}
  \textsc{J.~Bitterlich}:
  ``Data Structures and Efficient Algorithms for Power Series in Exact Real Arithmetic'',
  Bachelor Thesis TU Darmstadt (2012).
\bibitem[BLWW04]{Siam100Digits}
  \textsc{F.~Bornemann, D.~Laurie, S.~Wagon, J.~Waldvogel}:
  ``The SIAM 100-Digit Challenge:
  A Study in High-Accuracy Numerical Computing'',
  SIAM (2004);
  \url{http://www.siam.org/books/100digitchallenge/}
\bibitem[BPR06]{Basu}
  \textsc{S.~Basu, R.~Pollack, M.-F.~Roy}:
  ``\emph{Algorithms in Real Algebraic Geometry}'',
  Springer (2006).
\bibitem[Brav05]{Braverman1}
  \textsc{M.~Braverman}: ``On the Complexity of Real Functions'',
  pp.155--164 in \emph{Proc. 46th Annual IEEE Symposium on Foundations of Computer Science} (FOCS 2005).
\bibitem[BrCo06]{Braverman2}
  \textsc{M.~Braverman, S.A.~Cook}:
  ``Computing over the Reals: Foundations for Scientific Computing'',
  pp.318--329 in \emph{Notices of the AMS} vol.\textbf{53:3} (2006).
\bibitem[B\"{u}rg08]{Buergisser}
  \textsc{P.~B\"{u}rgisser}: 
  ``Smoothed Analysis of Condition Numbers'',
  pp.1--41 in \emph{Foundations of Computational Mathematics}
  (F.~Cucker, A.~Pinkus, M.~Todd Edts), 
  London Mathematical Society Lecture Note Series \textbf{363},
  Cambridge University Press (2009).
\bibitem[Chen66]{Cheney}
  \textsc{E.W.~Cheney}: ``\emph{Introduction to Approximation Theory}'',
  McGraw-Hill (1966).
\bibitem[DeYi10]{Demanet}
  \textsc{L.~Demanet, L.~Ying}: ``On Chebyshev Interpolation of Analytic Functions'',
  preprint (2010).
\bibitem[DuYa05]{YapHypergeom}
  \textsc{Z.~Du, C.K.~Yap}:
  ``Uniform Complexity of Approximating Hypergeometric Functions with Absolute Error'',
  pp.246--249 in (S.I.~Pae, H.-J.~Park, editors)
  \emph{Proc. 7th Asian Symp. on Computer Math.} (ASCM 2005).
\bibitem[Eige08]{Eigenwillig}
  \textsc{A.~Eigenwillig}: ``Real Root Isolation for Exact and Approximate 
  Polynomials Using Descartes' Rule of Signs'', Disseration (2008)
  Universit\"{a}t des Saarlandes.
\bibitem[Esca11]{Escardo}
  \textsc{M.~Escard\'{o}}: ``Algorithmic Solution of Higher-Type Equations'',
  to appear in \emph{Journal of Logic Computation} (2011).
\bibitem[FlGr06]{FlumGrohe}
   \textsc{J.~Flum, M.~Grohe}:
  ``\emph{Parameterized Complexity Theory}'',
  Springer (2006).
\bibitem[Frie84]{Friedman}
  \textsc{H.~Friedman}: ``The Computational Complexity of Maximization
  and Integration'', pp.80--98 in \emph{Advances in Mathematics} vol.\textbf{53} (1984).
\bibitem[GaHo12]{Gaertner}
  \textsc{T.~G\"{a}rtner, G.~Hotz}: ``Representation Theorems for Analytic Machines
  and Computability of Analytic Functions'', pp.65--84 in 
  \emph{Theory of Computing Systems} vol.\textbf{51:1} (2012).
\bibitem[Gevr18]{Gevrey}
  \textsc{M.~Gevrey}: ``Sur la nature analytique des solutions des \'{e}quations
  aux d\'{e}riv\'{e}es partielles. Premier m\'{e}moire'', 
  pp.129--190 in \emph{Annales scientifiques de l'\'{E}.N.S.},
  3\textsuperscript{e} serie, tome \textbf{35} (1918).
\bibitem[Grze57]{Grzegorczyk}
  \textsc{A.~Grzegorczyk}: ``On the Definitions of Computable Real
  Continuous Functions'', pp.61--77 in \emph{Fundamenta Mathematicae}
  {\bf 44} (1957).
\bibitem[Her99a]{Hertling}
  \textsc{P.~Hertling}: ``A Real Number Structure that is Effectively Categorical'',
  pp.147--182 in \emph{Mathematical Logic Quarterly} vol.\textbf{45:2} (1999).
\bibitem[Her99b]{HertlingRiemann}  
  \textsc{P.~Hertling}: ``An Effective Riemann Mapping Theorem'',
  pp.225-265 in \emph{Theoretical Computer Science} {\bf 219} (1999).
\bibitem[Hert02]{HertlingIBC}
  \textsc{P.~Hertling}: ``Topological Complexity of Zero Finding with Algebraic Operations'',
  pp.912--942 in \emph{Journal of Complexity} vol.\textbf{18:4} (2002).
\bibitem[vdH*11]{Mathemagix}
  \textsc{J.~van der Hoeven, G.~Lecerf, B.~Mourrain, P.~Trbuchet, 
  J.~Berthomieu, D.~Niang Diatta, A.~Mantzaflaris}:
  ``Mathemagix, The Quest of Modularity and Efficiency 
  for Symbolic and Certified Numeric Computation'', 
  pp.166--188 in \emph{ACM SIGSAM Communications in Computer Algebra} 
  vol.\textbf{177:3} (2011).
\bibitem[Ho99]{Ho}
  \textsc{C.-K.~Ho}: ``Relatively Recursive Real Numbers and Real
  Functions'', pp.99--120 in \emph{Theoretical Computer Science}
  vol.{\bf 210} (1999).
\bibitem[Hotz09]{Hotz}
  \textsc{G.~Hotz}: ``On in Polynomial Time Approximable Real
  Numbers and Analytic Functions'', pp.155--164 in \emph{Informatik
  als Dialog zwischen Theorie und Anwendung}, Vieweg+Teubner (2009).
\bibitem[KaCo96]{Kapron}
  \textsc{B.M.~Kapron, S.A.~Cook}:
  ``A New Characterization of Type-2 Feasibility'',
  pp.117--132 in \emph{SIAM Journal on Computing} vol.\textbf{25:1} (1996).
\bibitem[KaCo10]{AkiSTOC}
  \textsc{A.~Kawamura, S.A.~Cook}:
  ``Complexity Theory for Operators in Analysis'',
  pp.495--502 in \emph{Proc. 42nd Ann. ACM Symp. on Theory of Computing} (STOC 2010); \\
  full version in \emph{ACM Transactions in Computation Theory} 
  vol.\textbf{4:2} (2012), article 5.
\bibitem[Kawa10]{AkiODE}
  \textsc{A.~Kawamura}: ``Lipschitz Continuous Ordinary Differential Equations
  are Polynomial-Space Complete'', 
  pp.305--332 in \emph{Computational Complexity} vol.\textbf{19:2} (2010).
\bibitem[KLRK98]{Kreinovich}
  \textsc{V.~Kreinovich, A.~Lakeyev, J.~Rohn, P.~Kahl}:
  ``\emph{Computational Complexity and Feasibility of Data Processing and Interval Computations}'',
  Academic Press (1998).
\bibitem[Ko91]{Ko91}
  \textsc{K.-I.~Ko}:
  ``\emph{Computational Complexity of Real Functions}'',
  Birkh\"{a}user (1991).
\bibitem[Ko98]{KoSurvey}
  \textsc{K.-I.~Ko}:
  ``Polynomial-Time Computability in Analysis'', pp.1271--1317 in 
  (Yu. L. Ershov et al., Eds.)
  \emph{Handbook of Recursive Mathematics} vol.\textbf{2} (1998).
\bibitem[KoFr82]{KoFriedman}
  \textsc{K.-I.~Ko, H.~Friedman}: ``Computational Complexity of Real Functions'',
  pp.323--352 in \emph{Theoretical Computer Science} vol.\textbf{20} (1982).
\bibitem[Kohl96]{PolyProofs}
  \textsc{U.~Kohlenbach}: ``Mathematically Strong Subsystems of Analysis
  with Low Rate of Growth of Provably Recursive Functionals'',
  pp.31--71 in \emph{Archive for Mathematical Logic} vol.\textbf{36} (1996).
\bibitem[Kohl98]{ProofTheory}
  \textsc{U.~Kohlenbach}: ``Proof Theory and Computational Analysis'',
  pp.124--157 in \emph{Electronic Notes in Theoretical Computer Science}
  vol.\textbf{13} (1998).
\bibitem[Kohl08]{Kohlenbach}
  \textsc{U.~Kohlenbach}:
  ``\emph{Applied Proof Theory: Proof Interpretations and their Use
  in Mathematics}'', Springer Monographs in Mathematics (2008).
\bibitem[KORZ12]{Ota}
  \textsc{A.~Kawamura, H.~Ota, C.~R\"{o}snick, M.~Ziegler}:
  ``Computational Complexity of Smooth Differential Equations'',
  pp.578--589 in 
  \emph{Proc. 37th Int. Symp. on Mathematical Foundations of Computer Science}
  (MFCS'2012), Springer LNCS vol.\textbf{7464}; \qquad
  before presented (in Japanese) at the \emph{10th EATCS/LA Workshop on 
  Theoretical Computer Science} Kyoto, Japan, January 2012.
\bibitem[KoYu08]{KoYu}
  \textsc{K.-I.~Ko, F.~Yu}: ``On the Complexity of Convex Hulls of Subsets 
  of the Two-Dimensional Plane'', pp.121--135 in 
  \emph{Electronic Notes in Theoretical Computer Science}
  vol.\textbf{202} (2008).
\bibitem[KrMa82]{KreiselMacIntyre}
  \textsc{G. Kreisel, A.~Macintyre}:
  ``Constructive Logic versus Algebraization I'',
  pp.217-260 in \emph{Proc. L.E.J. Brouwer Centenary Symposium}
  (Troelstra, van Dalen; Eds.), North-Holland (1982).
\bibitem[KrPa02]{RealAnalytic}
  \textsc{S.G.~Krantz, H.R.~Parks}:
  ``\emph{A Primer of Real Analytic Functions}'',
  2nd Edition, Birkh\"{a}user (2002).
\bibitem[Lamb07]{realLib}
  \textsc{B.~Lambov}: ``RealLib: An Efficient Implementation of Exact Real Arithmetic'',
  pp.81--98 in \emph{Mathematical Structures in Computer Science} vol.\textbf{17:1} (2007).
\bibitem[LLM01]{Lombardi}
  \textsc{S.~Labhalla, H.~Lombardi, E.~Moutai}:
  ``Espaces m\'{e}triques rationnellement pr\'{e}sent\'{e}s et complexit\'{e},
le cas de l'espace des fonctions r\'{e}elles uniform\'{e}ment
continues sur un intervalle compact'',
  pp.265--332 in \emph{Theoretical Computer Science} vol.\textbf{250} (2001).
\bibitem[Mehl76]{Mehlhorn}
  \textsc{K.~Mehlhorn}: ``Polynomial and Abstract Subrecursive Classes'',
  pp.147--178 in \emph{Journal of Computer and System Sciences} vol.\textbf{12:2} (1976).
\bibitem[M\"{u}ll86]{Mueller86}
  \textsc{N.T.~M\"{u}ller}: ``Subpolynomial Complexity Classes of Real Functions and Real Numbers'',
  pp.284--293 in \emph{Proc. 13th International Colloquium on Automata, Languages,
  and Programming} (ICALP'86), Springer LNCS vol.\textbf{226} (1986).
\bibitem[M\"{u}ll87]{Mueller87}
  \textsc{N.T.~M\"{u}ller}: ``Uniform Computational Complexity of Taylor Series'',
  pp.435--444 in \emph{Proc. 14th Int Coll. on Automata, Languages,
  and Programming} (ICALP'87), Springer LNCS vol.\textbf{267}.
\bibitem[M\"{u}ll95]{Mueller95}
  \textsc{N.T.~M\"{u}ller}: ``Constructive Aspects of Analytic Functions'',
  pp.105--114 in \emph{Proc. Workshop on Computability and Complexity in Analysis} (CCA),
  InformatikBerichte FernUniversit\"{a}t Hagen vol.\textbf{190} (1995).
\bibitem[M\"{u}ll01]{iRRAM}
  \textsc{N.T.~M\"{u}ller}: ``The \texttt{iRRAM}: Exact Arithmetic in \texttt{C++}'',
  pp.222--252 in \emph{Proc. 4th Int. Workshop on Computability and Complexity in Analysis}
  (CCA'00), Springer LNCS vol.\textbf{2064} (2001).
\bibitem[M\"{u}Ko10]{Korovina}
  \textsc{N.T.~M\"{u}ller, M.~Korovina}: ``Making big steps in trajectories'',
  pp.106--119 in \emph{Proc. 7th Int. Conf. on Computability and Complexity in Analysis}
  (CCA'2010), Electronic Proceedings in Theoretical Computer Science vol.\textbf{24}.
\bibitem[M\"{u}Mo93]{Moiske}
  \textsc{N.T.~M\"{u}ller, B.~Moiske}:
  ``Solving Initial Value Problems in Polynomial Time'',
  pp.283--293 in \emph{Proc. 22nd JAIIO-PANEL} (1993).
\bibitem[M\"{u}Zh08]{MuellerZhao}
  \textsc{N.T.~M\"{u}ller, X.~Zhao}: ``Complexity of Operators on Compact Sets'',
  pp.101--119 in \emph{Proc. 4th Int. Conf. on Computability and Complexity in Analysis}
  (CCA'07), vol.\textbf{202} (2008).
\bibitem[Neum12]{Eike}
  \textsc{E.~Neumann}: ``Parametrised Complexity of Functionals
  on Spaces of Real Functions'', Bachelor's Thesis 
  Darmstadt Technical University (2012).
\bibitem[Oliv05]{Oliva}
  \textsc{P.~Oliva}: ``Polynomial-time Algorithms from Ineffective Proofs'',
  pp.128--137 in \emph{Prof. 18th IEEE Symp. on Logic in Computer Science}
  (LiCS'03).
\bibitem[Papa94]{Papadimitriou}
  \textsc{C.H.~Papadimitriou}: ``\emph{Computational Complexity}'',
  Addison-Wesley (1994).
\bibitem[PERi89]{PER}
  \textsc{M.B.~Pour-El, J.I.~Richards}: ``\emph{Computability in
  Analysis and Physics}'', Springer (1989).
\bibitem[Rett07]{RettingerHabil} 
  \textsc{R.~Rettinger}: ``\emph{Computability and Complexity Aspects
  of Univariate Complex Analysis}'', Habilitation thesis FernUniversit\"{a}t Hagen (2007).
\bibitem[Ret08a]{Rettinger08}
  \textsc{R.~Rettinger}: ``Lower Bounds on the Continuation of Holomorphic Functions'',
  pp.207--217 in \emph{Proc. 5th Int. Conf. on Computability and Complexity in Analysis}
  (CCA 2008), \emph{Electronic Notes in Theoretical Computer Science} vol.\textbf{221}.
\bibitem[Ret08b]{RobertBloch}
  \textsc{R.~Rettinger}: ``Bloch's Constant is Computable'',
  pp.896--907 in \emph{Journal of Universal Computer Science} vol.\textbf{14:6} (2008).
\bibitem[Rett09]{Rettinger09}
  \textsc{R.~Rettinger}: ``Towards the Complexity of Riemann Mappings (Extended Abstract)'', 
  in \emph{Proc. 6th Int. Conf. on Computability and Complexity in Analysis}
  (CCA 2009), \emph{Dagstuhl Research Online Publication Server}.
\bibitem[Rivl74]{Rivlin}
  \textsc{T.J.~Rivlin}: ``\emph{The Chebyshev Polynomials}'',
  Wiley\&Sons (1974).
\bibitem[Roes11]{Roesnick}
  \textsc{C.~R\"{o}snick}: ``\emph{Approximate Real Function Maximization and Query Complexity}'',
  Masters Thesis Informatik, Universit\"{a}t Paderborn (2011).
\bibitem[Rump04]{Rump}
  \textsc{S.M.~Rump}: ``Computer-Assisted Proofs I'',
  pp.2--11 in \emph{Bulletin of the Japan Society for Industrial and Applied Mathematics}
  vol.\textbf{14:3} (2004).
\bibitem[Schr07]{SchroederProbability}
  \textsc{M.~Schr\"{o}der}: 
  ``Admissible Representations of Probability Measures'',
  pp.61--78 in \emph{Electronic Notes in Theoretical Computer Science}
  vol.\textbf{167} (2007).
\bibitem[SGV94]{Vetter}
  \textsc{A.~Sch\"{o}nhage, A.F.W.~Grotefeld, E.~Vetter}:
  ``\emph{Fast Algorithms --- A Multitape Turing Machine Implementation}'', 
  BI Wissenschafts-Verlag (1994).
\bibitem[Shar12]{Shary}
  \textsc{S.P.~Shary} during \emph{15th GAMM--IMACS International Symposium
  on Scientific Computing, Computer Arithmetics and Verified Numerics}
  (SCAN'2012) in discussion with \textsc{V.~Kreinovich}.
\bibitem[Smal98]{Smale}
  \textsc{S.~Smale}: ``Mathematical Problems for the Next Century'',
  pp.7--15 in \emph{Math. Intelligencer} vol.\textbf{20:2} (1998).
\bibitem[TeZi10]{Tent}
  \textsc{K.~Tent, M.~Ziegler}: ``Computable Functions of Reals'',
  pp.43--66 in \emph{M"unster Journal of Mathematics} vol.\textbf{3} (2010).
\bibitem[Turi37]{Turing37}
  \textsc{Turing, A.M.}: ``On Computable Numbers, with an Application to the
  Entscheidungsproblem. A correction'', pp.544--546
  in \emph{Proc. London Math. Soc.} vol.{\bf43(2)} (1937).
\bibitem[TWW88]{TraubIBC}
  \textsc{J.F.~Traub, G.W.~Wasilkowski, H.~Wo\'{z}niakowski}:
  ``\emph{Information-Based Complexity}, Academic Press (1988).
\bibitem[vdHo08]{JorisComposition}
  \textsc{J.~van der Hoeven}: ``Fast Composition of Numeric Power Series'',
  Universit\'{e} Paris-Sud \emph{Technical Report} \textbf{2008-09}.
\bibitem[vdHo07]{JorisContinuation}
  \textsc{J.~van der Hoeven}: ``On Effective Analytic Continuation'',
  pp.111--175 in \emph{Mathematics in Computer Science} vol.\textbf{1:1} (2007)
\bibitem[vdHo05]{JorisAnalytic}
  \textsc{J.~van der Hoeven}: ``Effective Analytic Functions'',
  pp.433--449 in \emph{Journal of Symbolic Computation} vol.\textbf{39} (2005).
\bibitem[Weih00]{Weihrauch}
  \textsc{K.~Weihrauch}: ``\emph{Computable Analysis}'', Springer (2000).
\bibitem[Weih03]{WeihrauchComplexity}
  \textsc{K.~Weihrauch}: ``Computational Complexity on Computable Metric Spaces'', 
  pp.3--21 in \emph{Mathematical Logic Quarterly} vol.\textbf{49:1} (2003).
\bibitem[Yap04]{YapGuaranteedAccuracy}
  \textsc{C.-K.~Yap}: ``On Guaranteed Accuracy Computation'',
  pp.322--373 in \emph{Geometric Computation} (Falai Chen and Dongming Wang Edts),
  World Scientific Publishing (2004).
\bibitem[ZhWe01]{ArithHierarchy}
  \textsc{X.~Zheng, K.~Weihrauch}: ``The Arithmetical Hierarchy of Real Numbers'',
  pp.51--65 in \emph{Mathematical Logic Quarterly} vol.\textbf{47} (2001).
\bibitem[Zieg04]{MLQ2}
  \textsc{M. Ziegler}:
  ``Computable Operators on Regular Sets'',
  pp.392--404 in \emph{Mathematical Logic Quarterly} vol.{\bf50} (2004).
\bibitem[Zieg07]{Revising}
  \textsc{M.~Ziegler}:
  ``Revising Type-2 Computation and Degrees of Discontinuity'',
  pp.255--274 in \emph{Proc. 3rd International Conference on Computability and Complexity in Analysis}
  (CCA'06), Electronic Notes in Theoretical Computer Science vol.\textbf{167} (2007).
\bibitem[Zieg12]{Advice}
  \textsc{M.~Ziegler}: ``Real Computation with Least Discrete Advice:
  A Complexity Theory of Nonuniform Computability'', 
  pp.1108--1139 in \emph{Annals of Pure and Applied Logic}
  vol.\textbf{163:8} (2012).
\end{thebibliography}
\end{document}